\tikzset{mapstikz/.style={-stealth, 
decoration={markings,mark=at position 0pt with {\arrow[scale=0.5]{|}}}, preaction={decorate}}}
\theoremstyle{plain}   
\newtheorem{thm}{Theorem}[section] 
\let\c@thm\c@thm\makeatother
\let\c@cor\c@thm\makeatother
\let\c@lem\c@thm\makeatother
\newtheorem{prop}{Proposition}[section]
\let\c@prop\c@thm\makeatother
\let\c@claim\c@thm\makeatother
\newtheorem{conjecture}{Conjecture}[section]
\let\c@conjecture\c@thm\makeatother
\newtheorem{consequence}{Consequence}[section]
\let\c@consequence\c@thm\makeatother
\let\c@question\c@thm\makeatother
\newtheorem*{unnumberedtheoremA}{Theorem A}
\newtheorem*{unnumberedtheoremB}{Theorem B}
\newtheorem*{unnumberedtheoremC}{Theorem C}
\newtheorem*{unnumberedtheoremD}{Theorem D}
\theoremstyle{definition}
\newtheorem{defn}{Definition}[section]
\let\c@defn\c@thm\makeatother
\let\c@const\c@thm\makeatother
\newtheorem{notn}{Notation}[section]
\let\c@notn\c@thm\makeatother
\theoremstyle{remark}
\newtheorem{rmk}{Remark}[section]
\let\c@rmk\c@thm\makeatother
\newtheorem{ex}{Example}[section]
\let\c@ex\c@thm\makeatother
\let\c@observationn\c@thm\makeatother
\newtheorem{digression}{Digression}[section]
\let\c@digression\c@thm\makeatother
\let\c@equation\c@thm
\numberwithin{equation}{section}
\newcommand{\newrefformat}[2]{}
\crefname{lem}{Lemma}{Lemmas}
\crefname{thm}{Theorem}{Theorems}
\crefname{defn}{Definition}{Definitions}
\crefname{notn}{Notation}{Notations}
\crefname{const}{Construction}{Constructions}
\crefname{prop}{Proposition}{Propositions}
\crefname{conjecture}{Conjecture}{Conjectures}
\crefname{consequence}{Consequence}{Consequences}
\crefname{rmk}{Remark}{Remarks}
\crefname{digression}{Digression}{Digressions}
\crefname{cor}{Corollary}{Corollaries}
\crefname{equation}{Display}{Displays}
\crefname{ex}{Example}{Examples}
\newcommand{\cC}{\mathcal{C}}
\newcommand{\cI}{\mathcal{I}}
\newcommand{\cJ}{\mathcal{J}}
\newcommand{\cK}{\mathcal{K}}
\newcommand{\cM}{\mathcal{M}}
\newcommand{\cQ}{\mathcal{Q}}
\newcommand{\cS}{\mathcal{S}}
\newcommand{\cV}{\mathcal{V}}
\newcommand{\cat}{\cC\!\mathit{at}}
\newcommand{\set}{\cS\!\mathit{et}}
\newcommand{\kan}{\cK\!\mathit{an}}
\newcommand{\sset}{\mathit{s}\set}
\newcommand{\scat}{\mathit{s}\cat}
\newcommand{\qcat}{\mathit{q}\cat}
\newcommand{\fatslice}{\mathbin{\mkern-1mu{/}\mkern-5mu{/}\mkern-1mu}}
\DeclareMathOperator{\Ob}{Ob}
\DeclareMathOperator{\Tw}{Tw}
\DeclareMathOperator{\Nec}{Nec}
\DeclareMathOperator{\eq}{eq}
\DeclareMathOperator{\holim}{holim}
\DeclareMathOperator{\hofib}{hofib}
\DeclareMathOperator{\fib}{fib}
\DeclareMathOperator{\id}{id}
\DeclareMathOperator{\Mor}{Mor}
\newcommand{\Fun}{\textnormal{Fun}}
\DeclareMathOperator{\Hom}{Hom}
\DeclareMathOperator{\Map}{Map}
\DeclareMathOperator{\op}{op}
\newcommand{\po}{\ar@{}[dr]|{\text{\pigpenfont R}}}
\newcommand{\pb}{\ar@{}[dr]|{\text{\pigpenfont J}}}
\newcommand{\nocontentsline}[3]{}
\newcommand{\tocless}[2]{\bgroup\let\addcontentsline=\nocontentsline#1{#2}\egroup}
\author{Martina Rovelli}
\address{Department of Mathematics,
Johns Hopkins University,
Baltimore, USA
}
\email{mrovelli@math.jhu.edu}
\thanks{
The author was partially funded by the Swiss National Science Foundation, grant P2ELP2\textunderscore172086.}
\begin{document}

\title{Weighted limits in an $(\infty,1)$-category}

\maketitle

\begin{abstract}
We introduce the notion of weighted limit in an arbitrary quasi-category, suitably generalizing ordinary limits in a quasi-cat\-egory, and classical weighted limits in an ordinary category. This is accomplished by generalizing Joyal's approach: we identify a meaningful construction for the quasi-category of weighted cones over a diagram in a quasi-category, whose terminal object is the weighted limit of the considered diagram. When the quasi-category arises as the homotopy coherent nerve of a category enriched over Kan complexes, we use techniques by Riehl-Verity to show that the weighted limit agrees with the homotopy weighted limit in the sense of enriched category theory, for which explicit constructions are available. When the quasi-category is complete, tensored and cotensored over the quasi-category of spaces, we discuss a possible comparison of our definition of weighted limit with the approach by Gepner-Haugseng-Nikolaus.
\end{abstract}

\tableofcontents

\section*{Introduction}

In ordinary category theory, a limit is a universal object with a cone over a given diagram; the cone is given by maps from the limit to all the objects in the diagram that commute with all maps in the diagram. But especially in enriched or higher categorical contexts, it is desirable to generalize this notion of limit to allow for more exotic shapes.
Formally, the desired shape of a cone is specified by means of something called the \emph{weight}, this being another diagram defined on the same indexing category. The universal object with a cone of that shape is then refereed to as the \emph{weighted limit}.

Weighted limits are ubiquitous, though seldom recognized as such. Most famously homotopy limits and colimits as constructed by Bousfield-Kan \cite{BK} are simply weighted limits defined for a particular simplicially enriched weight. Key constructions in $2$-category theory such as the arrow category, the comma category, and the category of monads are also weighted limits, this time for a particular categorically enriched weight \cite{streetlimits}.
In unenriched category theory, the matching and latching objects are understood in \cite{RVreedy} as weighted limits and colimits with very natural weights valued in sets.

Several objects of interest in brave new algebra, such as the quasi-category of genuine $G$-spectra for a compact Lie group $G$ from \cite{AMGR}, satisfies the universal property of a suitably derived weighted limit in the $(\infty,2)$-category of $(\infty,1)$-categories.
These ad hoc constructions of weighted limits have been used to great effect and provide a strong motivation towards developing a fully general theory of weighted limits in an arbitrary $(\infty,2)$-category, rather than just this base case.

With the intent of providing foundations for such a theory,  we start by understanding and developing the theory of weighted limits in an arbitrary $(\infty,1)$-category instead.
The purpose of this paper is to define, construct and study the properties of weighted limits in a generic $(\infty,1)$-category. 
 We allow the $(\infty,1)$-category to be presented by different models and show that the notion of weighted limits is suitably compatible with the change of model, complementing the analogous analysis of (unweighted) limits in $(\infty,1)$-categories by Riehl-Verity \cite{RV7}.
We focus on the models of $\kan$-enriched categories and quasi-categories, but the approach taken for the latter easily generalizes to other models, such as complete Segal spaces. For quasi-categories that happen to be complete, tensored and cotensored over the $\infty$-category of spaces, we discuss how our constructions should compare with those by Gepner-Haugseng-Nikolaus \cite{GHN}. Weighted limits in an $(\infty,2)$-category will then be treated in a forthcoming paper.

\addtocontents{toc}{\protect\setcounter{tocdepth}{1}}
\subsection*{Weighted limits in enriched category theory}
Weighted limits were introduced as the correct general notion of limit for enriched categories independently by Auderset \cite{auderset}, by Borceux-Kelly \cite{borceuxkelly}, and by Street-Walters in unpublished work.

Given a $\cV$-enriched category $\cQ$, the weight for a diagram $D\colon\cJ\to\cQ$ has the form of a $\cV$-enriched functor
$W\colon\cJ\to\cV$
and its role is to encode the ``shape'' of these generalized cones. 
Indeed, while an ordinary cone from an object $A$ to the diagram $D$ picks a single arrow $A\to Dj$ for any $j$ in $\cJ$, a weighted cone consists of a collection of maps encoded by a morphism $W(j)\to\Map_{\cQ}(A,Dj)$ in $\cV$.

More precisely, the limit of the diagram $D$ weighted by $W$ is an object $\lim^WD$ of $\cQ$ for which there is a natural isomorphism in $\cV$
$$\Map_{\cQ}(A,\lim{}^WD )\cong\Map_{\cV\Fun(\cJ,\cV)}(W,\Map_{\cQ}(A,D-)).$$
When the weight is the functor $\ast_{\cJ}\colon\cJ\to\cV$ constant at the monoidal unit $\ast$ of $\cV$, weighted cones are ordinary cones, and the expression simplifies to
$$\Map_{\cQ}(A,\lim{}^{\ast_{\cJ}}D )\cong\Map_{\cV\Fun(\cJ,\cQ)}(\Delta A,D),$$
so that limits weighted by the constant weight have a more familiar universal property. We refer to these limits as ``unweighted limits'', though they appear in the literature as ``conical'' or ``enriched'' limits.

\addtocontents{toc}{\protect\setcounter{tocdepth}{1}}
\subsection*{Homotopy weighted limits in enriched category theory}
When the base for enrichment $\cV$ and the category $\cQ$ are endowed with homotopical structures,
the categories of enriched functors $\cV\Fun(\cJ,\cQ)$ and $\cV\Fun(\cJ,\cV)$ naturally inherit a homotopical structure in which the weak equivalences are detected levelwise.

Thanks to these homotopical structures, it makes sense to ``derive'' the universal property that defines weighted limits,
by replacing mapping objects with derived mapping objects.
The corresponding universal property identifies (up to equivalence) a weighted limit that is homotopically meaningful, and should be thought of as a ``homotopy weighted limit''.
In other words,
the weighted homotopy limit of a diagram $D\colon\cJ\to\cQ$ consists of an object $\holim^WD$ of $\cQ$ together with natural equivalences in $\cV$
$$\Map^h_{\cQ}(A,\holim^WD)\simeq\Map^h_{\cV\Fun(\cJ,\cV)}(W,\Map^h_{\cQ}(A,D-)).$$

The framework in which this idea has been explored the most is when $\cQ$ arises as the category of fibrant objects of a $\cV$-model structure $\cM$. In this case,
for the mapping spaces to have the correct homotopy type and be automatically derived it is enough
to take a projectively cofibrant replacement $W^{cof}$ for the weight $W\colon\cJ\to\cV$.
It follows that the weighted homotopy limit, $\holim^WD$, can be realized as the strict weighted limit, $\lim{}^{W^{cof}}D$, or even as $\lim{}^WD$ if the weight is already cofibrant.

When the base for the enrichment $\cV$ is $\cat$, endowed with the canonical model structure, homotopy weighted limits in the (model) category $\cat$ are discussed by Kelly in \cite{kelly2limits} under the name of ``weighted bilimits'', and a weight $W\colon\cJ\to\cat$ is projectively cofibrant when it is ``flexible''.

When the base for the enrichment $\cV$ is $\sset$, endowed with the Kan-Quillen model structure, 
homotopy weighted limits are discussed by Riehl and Verity in \cite{RiehlCHT,RV7}.
In this setting, a theorem of Gambino
provides a different viewpoint on homotopy weighted limits, asserting that, when $\cM$ is a simplicial 
model category, the weighted limit defines a Quillen bifunctor
$$\lim{}^--\colon s\Fun(\cJ,\cV)_{proj}^{\op}\times s\Fun(\cJ,\cM)_{proj}\to\cM,$$
and the homotopy weighted limit $\holim^--$ is precisely its derived functor.

In a similar framework to that of \cite{RV7}, we discuss the theory of homotopy weighted limits in any $(\infty,1)$-category $\cQ$ presented by a minimal structure, namely just that of a $\kan$-category. This enrichment naturally induces a notion of weak equivalence in $\cQ$, given by those maps that induce weak equivalences on hom Kan complexes in both variables. This class of weak equivalences make $\cQ$ into a homotopical category.

We show that, under suitable completeness conditions for $\cQ$, the homotopy limit of a diagram $D\colon\cJ\to\cQ$ weighted by $W\colon\cJ\to\sset$ can still be computed as the strict limit $\lim{}^{W^{cof}}D$.
We also prove the following analog of Gambino's result, which will appear as \cref{weightedhomotopylimitsarederivedfunctors}.
\begin{unnumberedtheoremA}
If $\cQ$ is a complete simplicial $\kan$-category that is cotensored over $\sset$,
the homotopy weighted limit $\holim^--$ is the derived functor of the strict weighted limit
$$\lim{}^--\colon s\Fun(\cJ,\sset)^{\op}\times s\Fun(\cJ,\cQ)\to\cQ,$$
when both $s\Fun(\cJ,\sset)$ and $s\Fun(\cJ,\cQ)$ are endowed with the levelwise homotopical structure.
\end{unnumberedtheoremA}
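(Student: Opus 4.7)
The plan is to leverage the fact, established just before the theorem statement, that when $W^{cof}\to W$ is a projective cofibrant replacement of the weight, the strict weighted limit $\lim{}^{W^{cof}}D$ realizes the homotopy weighted limit $\holim^WD$. The theorem then reduces to identifying the functor $(W,D)\mapsto \lim{}^{W^{cof}}D$ with the right derived functor of the strict bifunctor $\lim{}^{-}(-)$ relative to the levelwise homotopical structures.

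First I would recall the standard criterion for computing the derived functor of a bifunctor $F\colon \cA^{\op}\times \cB\to \cC$ between homotopical categories: if $Q$ is a cofibrant replacement functor in $\cA$, then $F(Q(-),-)$ computes the derived functor of $F$ provided (i) for each cofibrant $A\in\cA$ the functor $F(A,-)$ preserves weak equivalences in $\cB$, and (ii) for each $B\in\cB$ the functor $F(-,B)$ takes weak equivalences between cofibrant objects of $\cA$ to weak equivalences in $\cC$. The bulk of the proof consists in verifying (i) and (ii) for the strict weighted-limit bifunctor.

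For (i), the strategy is a cellular induction along the projective cell structure of the cofibrant weight $W$. Because $\cQ$ is complete and cotensored over $\sset$, the weighted limit $\lim{}^WD$ admits the familiar end-type description built from cotensors of the form $D(j)^{W(j)}$; in particular $\lim{}^W(-)$ converts a transfinite projective cell presentation of $W$ into a transfinite tower of pullbacks in $\cQ$ whose successive stages are cotensors of $D$ against boundary inclusions $\partial\Delta^n\hookrightarrow\Delta^n$. Using that $\cQ$ is $\kan$-enriched, cotensors preserve levelwise weak equivalences in $D$, and cotensors with trivial cofibrations of simplicial sets are homotopically well-behaved, so the induction propagates the property that $\lim{}^WD\to\lim{}^WD'$ is a weak equivalence whenever $D\to D'$ is. Property (ii) follows by the symmetric argument, varying the weight instead of the diagram: cotensoring a fixed diagram $D$ against a weak equivalence between projectively cofibrant simplicial weights produces a weak equivalence in $\cQ$ by the same cellular induction combined with the Kan enrichment of $\cQ$.

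Once (i) and (ii) are in place, the general theory of derived bifunctors yields a canonical equivalence between $\lim{}^{W^{cof}}D$ and the right derived functor of $\lim{}^{-}(-)$, which together with the already-established identification $\lim{}^{W^{cof}}D\simeq\holim^WD$ completes the proof. The main obstacle is property (i): in the absence of a genuine model structure on $\cQ$, the cellular induction cannot be outsourced to a Quillen-bifunctor argument à la Gambino and must be executed directly, using only completeness, cotensoring, and the Kan enrichment of $\cQ$, thereby adapting the Riehl--Verity arguments from the simplicial-model-category setting to the leaner framework of a complete cotensored $\kan$-category.
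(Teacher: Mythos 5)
Your overall skeleton matches the paper's: both reduce the theorem to the prior identification $\lim{}^{W^{cof}}D\simeq\holim^{W}D$ plus the claim that the strict weighted limit bifunctor is homotopical on the subcategory of projectively cofibrant weights, after which the deformation formalism for derived functors (your criterion (i)+(ii) is the correct one) finishes the argument. The divergence, and the gap, is in how that homotopicalness is verified. Your cellular induction along a projective cell presentation of $W$ expresses $\lim{}^{W}D$ as a transfinite inverse limit of pullbacks whose stages are cotensors $D(j)^{\Delta[n]}\to D(j)^{\partial\Delta[n]}$; but to propagate a weak equivalence through those pullbacks and through the limit of the tower you need strict pullbacks and sequential limits in $\cQ$ to be homotopy-invariant, i.e.\ you need the relevant legs to be ``fibrations'' in a sense for which strict limits compute homotopy limits. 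A complete, cotensored $\kan$-category carries no such fibration structure: its weak equivalences are defined purely by the enrichment, and nothing in the hypotheses guarantees that a strict pullback along the cotensor of a boundary inclusion is invariant under them (even for $\cQ=\kan$ this uses the Kan--Quillen model structure). You flag exactly this as ``the main obstacle'' but do not resolve it, so the inductive step is unjustified as written.

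The paper sidesteps the induction by representability. For any object $A$ the defining universal property gives
$$\Map_{\cQ}\bigl(A,\lim{}^{W}D\bigr)\cong\Map_{s\Fun(\cJ,\sset)}\bigl(W,\Map_{\cQ}(A,D-)\bigr),$$
and since $W$ is projectively cofibrant while $\Map_{\cQ}(A,D-)$ is projectively fibrant, the right-hand side already computes the derived mapping space (this is where the Quillen-bifunctor property of $\Map_{\Fun(\cJ,\sset)}$, quoted from Gambino, is used --- in $\sset$, not in $\cQ$). Hence it is invariant under weak equivalences between cofibrant weights and under levelwise weak equivalences of $D$, and because weak equivalences in $\cQ$ are detected on mapping spaces, one concludes $\lim{}^{W_1}D\simeq\lim{}^{W_2}D$ without ever taking a strict limit apart inside $\cQ$. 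In effect, the cell-by-cell analysis you want is performed only after applying $\Map_{\cQ}(A,-)$, where it is packaged into the projective model structure on $s\Fun(\cJ,\sset)$. If you replace your verification of (i) and (ii) by this Yoneda-style argument, your proof becomes the paper's.
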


In particular, the homotopy limit $\holim^{\ast_{\cJ}}D$ weighted by the constant weight $\ast_{\cJ}\colon\cJ\to\sset$ can be computed as a strict limit $\lim{}^{\ast_{\cJ}^{cof}}D$ weighted by a cofibrant replacement $\ast_{\cJ}^{cof}$ of the constant weight, and agrees with the homotopy limit $\holim D$ in a simplicial category
intended in the classical sense, as e.g.~ in \cite{BK}.

The classical formulas to compute homotopy (co)limits share the same formalism. Indeed, they compute $\holim D$ as a strict (co)limit of $D$ weighted by suitable weights, which we can a posteriori recognize as cofibrant replacements of the constant weight.
When $\cJ$ is an ordinary unenriched category, a cofibrant replacement for the constant weight is the functor $N(J_{/_-})\colon\cJ\to\sset$, which was first used by Bousfield-Kan in \cite{BK} to compute homotopy (co)limits of diagrams $\cJ\to\cQ$.
When $\cJ=\mathfrak C[J]$ is the homotopy coherent realization of a simplicial set $J$, a cofibrant replacement for the constant weight is the functor $\Map_{\mathfrak C[\Delta[0]\star J]}(0,-)\colon\mathfrak C[J]\to\sset$, which was more recently used by Riehl-Verity in \cite{RV7} to compute homotopy (co)limits of homotopy coherent diagrams $\mathfrak C[J]\to\cQ$.

\addtocontents{toc}{\protect\setcounter{tocdepth}{1}}
\subsection*{Limits in quasi-categories}
Given any $\kan$-category $\cQ$, the enrichment induces a canonical homotopical structure on $\cQ$, in which the weak equivalences are tested on hom spaces in both variables.
As a special case of the machinery that we just described, it thus makes sense to talk about homotopy weighted limits in any $\kan$-enriched category, which is an established model for an $(\infty,1)$-category. 
However, given that much of $(\infty,1)$-category theory has been developed for the model of quasi-categories, we  want to make sense of (homotopy) weighted limits for an $(\infty,1)$-category presented by a quasi-category\footnote{The nature of the model of quasi-categories as $(\infty,1)$-categories is such that strict notions do not make sense, and all constructions are automatically derived. For instance, the only kind of limit that makes sense in a quasi-category is already a homotopy limit. For the same reason, it only makes sense to introduce \emph{homotopy} weighted limits in a quasi-category, and we will just refer to them as weighted limits}.

For such a formalization to be meaningful, it should suitably generalize the well-established notion of (unweighted) limit in a quasi-category from \cite{joyalquasicategories}. Given a diagram $d\colon J\to Q$ in a quasi-category $Q$, the limit is defined as the terminal object in a \emph{quasi-category} of cones over $d$.
Joyal \cite{joyalnotes} gives two equivalent constructions for such a quasi-category, in the form of a \emph{neat slice} $Q_{/d}$ and a \emph{fat slice} $Q_{\fatslice d}$ over the diagram $d$. The slices functors
$$(-)_{/(-)}\ ,\ (-)_{\fatslice (-)}\ \colon{}^{J_/}\sset\to\sset$$
are in turn defined as right adjoints of a neat and a fat \emph{join construction}:
$$-\star J\ ,\ -\diamond J\ \colon\sset\to{}^{J_/}\sset.$$
All these constructions will be adapted to the weighted context.

\addtocontents{toc}{\protect\setcounter{tocdepth}{1}}
\subsection*{Weighted limits in quasi-categories}

In order to define weighted limits for a diagram $d\colon J\to Q$ in a quasi-category $Q$, we must first decide what the nature of the weight should be.
Given that any quasi-category $Q$ is morally enriched over the $(\infty,1)$-category $Kan$ of spaces, the natural guess would be to look for an assignment $w\colon J\to Kan$ of some kind.
However, in the quasi-categorical model it is laborious to specify such a homotopy coherent diagram. Rather than trying to formalize the nature of the assignment $w$, it is easier to instead encode it into a map of simplicial sets $p\colon\tilde J\to J$, in which each value $w(j)$ is recorded by the fiber $p^{-1}(j)$ over $j$.

Given a weight $p\colon\tilde J\to J$, in \cref{weightedconesqcat,weightedconesqcatfat} we define weighted versions of the neat and the fat join constructions,
$$-\star^p J\ ,\ -\diamond^p J\ \colon\sset\to{}^{J_/}\sset$$
by taking a pushout of the unweighed join constructions with $\tilde J$ along the fibration $p$.
The right adjoints of these functors,
$$(-)^p_{/(-)}\ ,\ (-)^p_{\fatslice (-)}\ \colon{}^{J_/}\sset\to\sset,$$
are then weighted versions of the neat and fat slice constructions.
In harmony with the unweighted picture, the neat weighted slice $Q^p_{/d}$ and the fat weighted slice $Q^p_{\fatslice d}$ are shown in \cref{Comparison of models} to be equivalent.

\begin{unnumberedtheoremB}
For any weight $p\colon\tilde J\to J$, the neat weighted slice $Q^p_{/d}$ and fat weighted slice $Q^p_{\fatslice d}$ in a quasi-category $Q$ over a diagram $d\colon J\to Q$ are equivalent models for a quasi-category of weighted cones in $Q$ over the diagram $d$, i.e.
$$Q^p_{/d}\simeq Q^p_{\fatslice d}.$$
\end{unnumberedtheoremB}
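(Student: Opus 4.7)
The plan is to bootstrap the weighted comparison from the unweighted one via the pushout definitions of $\star^p$ and $\diamond^p$, and then transfer it across the join/slice adjunction. Recall that in the unweighted setting there is a canonical natural transformation $-\diamond J\to -\star J$ in ${}^{J_/}\sset$ that is a Joyal trivial cofibration, and it is precisely this transformation, after passing to the right adjoints, that yields Joyal's equivalence $Q_{/d}\simeq Q_{\fatslice d}$. The goal is to lift this transformation and this argument to the weighted setting.

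First, by the universal property of the pushouts defining the weighted joins, the unweighted comparison $-\diamond J\to -\star J$ induces a natural comparison $-\diamond^p J\to -\star^p J$ in ${}^{J_/}\sset$, since the unweighted comparison is compatible with the data $\tilde J\to J$ used to form the pushouts. Next, I would verify that the induced map is itself a Joyal trivial cofibration: since the relevant legs of the pushout diagrams are monomorphisms and hence Joyal cofibrations, left properness of the Joyal model structure guarantees that the weighted comparison is a pushout of a Joyal trivial cofibration along a Joyal cofibration, and is therefore itself a Joyal trivial cofibration.

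Finally, taking right adjoints over $J$ with target the quasi-category $Q$, the weighted comparison corresponds to a natural map $Q^p_{/d}\to Q^p_{\fatslice d}$. Because $Q$ is Joyal fibrant and the weighted comparison is a trivial cofibration between Joyal cofibrant objects, the induced map is a Joyal weak equivalence between quasi-categories, establishing $Q^p_{/d}\simeq Q^p_{\fatslice d}$. The main obstacle will be the middle step: pinning down the precise cofibrancy of the legs of the pushout diagrams defining $\star^p$ and $\diamond^p$ so that the pushouts are homotopy-invariant and so that the induced comparison inherits the trivial-cofibration property of its unweighted counterpart. Since $p$ is allowed to be any map of simplicial sets, some case analysis — or a preliminary factorization of $p$ into a monomorphism followed by a Joyal equivalence — may be needed; once this is settled, the rest is a formal consequence of the adjunctions and left properness.
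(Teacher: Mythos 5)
Your overall architecture---induce a comparison $-\diamond^p J\to -\star^p J$ from the unweighted one via the pushout definitions, then transfer it across the two Quillen adjunctions to obtain $Q^p_{/d}\to Q^p_{\fatslice d}$---is the same as the paper's, and your first and last steps are essentially correct. In particular the last step is exactly the paper's appeal to \cite[Corollary 1.4.4(b)]{hovey}: a natural weak equivalence between left Quillen functors corresponds to a natural weak equivalence between the right adjoints evaluated on fibrant objects, and $d\colon J\to Q$ is fibrant in ${}^{J/}\sset$ precisely when $Q$ is a quasi-category. You do not need (and should not claim) that the join comparison is a trivial cofibration for this step; a natural categorical equivalence of left Quillen functors suffices.

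The middle step, however, contains a genuine gap. The unweighted comparison $I\diamond \tilde J\to I\star \tilde J$ is a categorical equivalence but it is \emph{not} a monomorphism, hence not a trivial cofibration: already $\Delta[0]\diamond\Delta[1]$ has $13$ two-simplices while $\Delta[0]\star\Delta[1]=\Delta[2]$ has $10$, so the comparison is not injective. Moreover, in the square exhibiting $I\diamond^pJ\to I\star^pJ$ as a cobase change of $I\diamond\tilde J\to I\star\tilde J$, the leg along which one pushes out is $I\diamond\tilde J\to I\diamond^pJ$, which is itself a cobase change of the arbitrary map $p$ and so need not be a cofibration either. Thus neither ``pushout of a trivial cofibration'' nor left properness applies to that square, and pre-factoring $p$ does not help, since you would first have to know that the weighted join is invariant under replacing $p$---which is of the same nature as what you are trying to prove. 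The correct repair, and the paper's actual argument, is to apply homotopy invariance to the two \emph{defining} spans rather than to the comparison square: the legs $\tilde J\hookrightarrow I\diamond\tilde J$ and $\tilde J\hookrightarrow I\star\tilde J$ \emph{are} monomorphisms, so by \cite[Proposition A.2.4.4]{htt} both $(I\diamond\tilde J)\amalg_{\tilde J}J$ and $(I\star\tilde J)\amalg_{\tilde J}J$ are homotopy pushouts; the map of spans is the identity on $J$ and on $\tilde J$ and a categorical equivalence on the remaining entry, so the induced map $I\diamond^pJ\to I\star^pJ$ is a categorical equivalence (natural in $I$, though not a cofibration). With that in hand, your concluding step goes through as described.
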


Relying on the construction of the quasi-category of weighted cones given in Theorem B, we then define the weighted limit to be their terminal object, which can be characterized by an explicit lifting property.

The fact that this definition is the correct one is validated by showing in \cref{Comparison of weighted limits in different models} that the notion of homotopy weighted limit in a $\kan$-enriched category
$\cQ$ is acually compatible with the one we just introduced for weighted limits in its homotopy coherent nerve $\mathfrak N[\cQ]$. The method is inspired by the analogous result for unweighted limits presented in \cite{RV7}.

To make this precise, recall that Cordier's homotopy coherent nerve construction $\mathfrak N$ from \cite{cordier} forms together with the homotopy realization an adjoint pair $\mathfrak C\colon\sset\rightleftarrows\scat\colon\mathfrak N$.
In particular, any simplicial diagram $D\colon\mathfrak C[J]\to\cQ$ in $\cQ$ adjoins to a diagram $d\colon J\to\mathfrak N[\cQ]$ in the homotopy coherent nerve $\mathfrak N[\cQ]$. Also, any weight $W\colon\mathfrak C[J]\to\sset$ can be turned into a fibration $Un(W)\colon\tilde J\to J$ using Lurie's unstraightening machinery \cite{htt}.

\begin{unnumberedtheoremC}
Let $\cQ$ be a $\kan$-category, $D\colon\mathfrak C[J]\to\cQ$ a diagram in $\cQ$ and $W\colon\mathfrak C[J]\to\kan$ a weight.
If $\holim^{W}D$ exists then $\lim{}^{Un(W)}d$ exists and
$$\lim{}^{Un(W)}d\simeq\holim^{W}D.$$
\end{unnumberedtheoremC}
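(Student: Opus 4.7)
My plan is to verify that $\holim^W D$, viewed as a vertex of the quasi-category $\mathfrak N[\cQ]$, satisfies the universal property defining $\lim{}^{Un(W)}d$, namely being a terminal object in the quasi-category of weighted cones over $d$. By Theorem B I am free to model this quasi-category by either the neat or the fat weighted slice, and the fat slice $\mathfrak N[\cQ]^{Un(W)}_{\fatslice d}$ is more convenient here because its structure as a pushout along the fibration $Un(W)\colon\tilde J\to J$ interacts cleanly with Lurie's straightening-unstraightening Quillen equivalence. Concretely, terminality of $\holim^W D$ amounts to showing that for every vertex $A$ of $\mathfrak N[\cQ]$ the fiber of the canonical projection $\mathfrak N[\cQ]^{Un(W)}_{\fatslice d}\to\mathfrak N[\cQ]$ over $A$ is contractible and picks out a distinguished weighted cone from $A$ to $D$.

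The heart of the argument is then a comparison identifying this fiber with a derived hom space of enriched functors. Using the adjunction $\mathfrak C\dashv\mathfrak N$ and unwinding the pushout definition of the fat weighted join $A\diamond^{Un(W)}J$, I would produce a natural equivalence
$$\Map_{\mathfrak N[\cQ]^{Un(W)}_{\fatslice d}}(A,-)\simeq\Map^h_{s\Fun(\mathfrak C[J],\sset)}(W,\Map_{\cQ}(A,D-)),$$
where the right-hand side is the derived hom in the projective model structure on enriched functors. Once this is established, combining it with the defining enriched universal property
$$\Map^h_{\cQ}(A,\holim^W D)\simeq\Map^h_{s\Fun(\mathfrak C[J],\sset)}(W,\Map_{\cQ}(A,D-))$$
and with the fact that mapping spaces in $\mathfrak N[\cQ]$ compute the derived hom spaces of the underlying $\kan$-category $\cQ$ produces the required equivalence. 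Chasing the identity cone on $\holim^W D$ through the comparison then identifies it with the canonical contractible component, so $\holim^W D$ is terminal in the quasi-category of weighted cones, yielding the asserted equivalence $\lim{}^{Un(W)}d\simeq\holim^W D$.

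The hard part will be the mapping space identification in the second paragraph. It requires a compatibility argument between the pushout that defines the fat weighted join, the action of the left Quillen functor $\mathfrak C$ on that pushout, and the enriched weighted limit formula computed from a projectively cofibrant replacement of $W$. The key input should be that the straightening of the Cartesian fibration $Un(W)$ is a cofibrant model of $W$, so that the strict enriched weighted limit on the right already computes the homotopy weighted limit, combined with the observation that $\mathfrak C$ sends the defining pushout to one whose mapping spaces can be analyzed using the Bergner-fibrancy of $\cQ$. This mirrors the template set by Riehl-Verity's comparison of unweighted limits in the two models, and should adapt to the weighted setting once the straightening-unstraightening correspondence is threaded through the construction.
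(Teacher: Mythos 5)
Your overall strategy -- reduce terminality in the quasi-category of weighted cones to an equivalence of mapping spaces, and then feed in the enriched universal property of $\holim^W D$ -- is the same shape as the paper's argument, but there are two problems. First, your criterion for terminality is misstated: the fiber of the projection $\mathfrak N[\cQ]^{Un(W)}_{\fatslice d}\to\mathfrak N[\cQ]$ over a vertex $A$ is the space of weighted cones with summit $A$, which is essentially never contractible (already for an unweighted one-object diagram it is $\Map_{\cQ}(A,D)$). What terminality of the cone $\lambda$ with summit $L$ requires is that this fiber be equivalent to $\Map_{\cQ}(A,L)$ via a map induced by $\lambda$, i.e.\ the representability $Q\downarrow_Q L\simeq Q^{Un(W)}_{/d}$ of \cref{uniquenessweightedlimit}; your second paragraph in fact states the correct target equivalence, contradicting the first. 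The paper sidesteps this bookkeeping by working directly with the lifting-property formulation of terminality in the neat slice and transposing it through the adjunctions $(-)\star^{Un(W)}J\dashv(-)^{Un(W)}_{/(-)}$ and $\mathfrak C\dashv\mathfrak N$.

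Second, and more seriously, the step you label ``the hard part'' is the entire content of the theorem, and your sketch of it points in a direction that would not work as described. The functor $\mathfrak C$ does not turn the pushout defining the weighted join into a pushout of simplicial categories whose mapping spaces can be read off; the paper's \cref{mainfactweighted} (proved in the appendix by a necklace analysis) shows that $\mathfrak C[I\star^{p}J]$ is instead a \emph{pullback} $\mathfrak C[I\star\Delta[0]]\times_{\mathfrak C[\Delta[0]\star\Delta[0]]}\mathfrak C[\Delta[0]\star^{p}J]$, and the identification $\Map_{\mathfrak C[\Delta[0]\star^{Un(W)}J]}(0,j)\cong St(Un(W))(j)=W^{cof}(j)$ is exactly Lurie's straightening formula (\cref{straighteningunstraighteningqcats}(b), \cref{cofibrantreplacement}(b)). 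From there the paper still needs \cref{computation1} to convert the lifting problems for $\partial\Delta[n]\hookrightarrow\Delta[n]$ into cubical-horn lifting problems, and \cref{mainlemma} to recognize that solvability of all of these is equivalent to the single map $\Map_{\cQ}(A,L)\to\Map_{s\Fun(\mathfrak C[J],\sset)}(W^{cof},\Map_{\cQ}(A,D\bullet))$ being a weak equivalence of Kan complexes, which is where the hypothesis that $\holim^W D$ exists enters. None of this combinatorial machinery is supplied or replaced by your outline, so as it stands the proposal identifies the right statement to prove but does not prove it. (Minor: $Un(W)$ is a left fibration, not a Cartesian fibration.)
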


The theorem, which will appear as \cref{maintheorem}, provides evidence to the fact that the definition of weighted limits in quasi-categories is meaningful.
Furthermore, it also implies that under completeness assumptions for $\cQ$ every weighted limit of a diagram in the quasi-category $\mathfrak N[\cQ]$ can be realized as a weighted limit in $\cQ$, for which explicit formulas are available.

Interestingly enough, the construction of weighted limits in quasi-categories also allows us to prove in \cref{allweightedhomototopylimitsareconical} that every weighted limit in an $(\infty,1)$-category can be computed as the (unweighted) limit of a suitably fatter diagram, generalizing a classical result from strict $1$-category theory.

\begin{unnumberedtheoremD}
Given a quasi-category $Q$, the limit of a diagram $J\to Q$ weighted by $p\colon\tilde J\to J$ is the limit of the diagram
$d\circ p\colon\tilde J\to J\to Q$, i.e,
$$\lim{}^pd\simeq\lim(d\circ p).$$
\end{unnumberedtheoremD}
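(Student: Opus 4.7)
The plan is to observe that the weighted slice $Q^p_{/d}$ is in fact canonically isomorphic, as a simplicial set, to the ordinary slice $Q_{/(d\circ p)}$, from which the equivalence of terminal objects is immediate. The key input is the pushout definition of the weighted join from \cref{weightedconesqcat}. Explicitly, for any simplicial set $X$ the weighted join $X\star^p J$ fits in a pushout square of the form
$$\xymatrix{
\tilde J \ar[r]^{p}\ar[d] & J \ar[d] \\
X\star \tilde J \ar[r] & X\star^p J
}$$
where the left vertical map is the canonical inclusion of $\tilde J$ into the unweighted join.

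First, I unpack what an $n$-simplex of $Q^p_{/d}$ looks like. By adjunction it is a map $\Delta[n]\star^p J\to Q$ restricting to $d$ on $J$. By the universal property of the pushout above, such a map is equivalent to a map $\Delta[n]\star \tilde J\to Q$ whose restriction to $\tilde J$ is $d\circ p$, the restriction to $J$ being forced to be $d$ and the two agreeing on $\tilde J$ precisely because the square commutes. But maps $\Delta[n]\star \tilde J\to Q$ extending $d\circ p\colon\tilde J\to Q$ are by definition the $n$-simplices of the ordinary slice $Q_{/(d\circ p)}$. Naturality in $n$ then upgrades this bijection to an isomorphism of simplicial sets
$$Q^p_{/d}\ \cong\ Q_{/(d\circ p)}.$$

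Second, since the weighted limit $\lim^pd$ is defined to be a terminal object of $Q^p_{/d}$ and the (unweighted) limit $\lim(d\circ p)$ is a terminal object of $Q_{/(d\circ p)}$, the isomorphism above forces the two to coincide, yielding the asserted equivalence $\lim{}^pd\simeq\lim(d\circ p)$. No genuine obstruction arises here: once the pushout description of $\star^p$ is in place, everything is formal from the universal property of pushouts and the fact that terminal objects are preserved by isomorphisms of quasi-categories. The only mild subtlety is keeping track of the fact that the structure map $J\to X\star^p J$ used to define the object $d\in Q^p_{/d}$ really does factor as $J\to X\star^p J$ coming from the pushout leg, so that the identification above genuinely fixes $d$ on the $J$-factor; this is built into the pushout construction and requires no extra work.
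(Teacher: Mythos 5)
Your proof is correct, but it takes a different route from the paper's. You establish a strict isomorphism of simplicial sets $Q^p_{/d}\cong Q_{/(d\circ p)}$ for the \emph{neat} slices, directly from the pushout presentation $X\star^p J=(X\star\tilde J)\amalg^p_{\tilde J}J$; equivalently, this is immediate from \cref{adjunctionweightedjoinslice}, which defines $(-)^p_{/(-)}$ as the composite of precomposition with $p$ followed by the ordinary slice, so that $Q^p_{/d}=Q_{/(d\circ p)}$ essentially by construction. The paper instead argues through the \emph{fat} slice and comma quasi-categories: it invokes \cref{uniquenessweightedlimit} and \cref{uniquenesslimit}, which chain together the equivalences $Q^p_{/d}\simeq Q^p_{\fatslice d}\cong\Delta_Q\downarrow_{Q^{\tilde J}}(d\circ p)\cong Q_{\fatslice (d\circ p)}\simeq Q_{/(d\circ p)}$ (using \cref{slicevsfatslice}, \cref{fatslicevscomma}, and the external representability result identifying the (weighted) limit with the representing object of the quasi-category of cones). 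Your argument is more elementary and yields an isomorphism rather than a mere categorical equivalence, at the cost of not passing through the representability statements $Q\downarrow_Q\ell\simeq Q^p_{/d}$ that the paper reuses elsewhere; note only that your concluding step should also record that the cone-point projections to $Q$ correspond under your isomorphism (so that the terminal objects have the same underlying vertex of $Q$), which you do address and which is indeed automatic from the pushout leg $\Delta[0]\to\Delta[0]\star\tilde J\to\Delta[0]\star^p J$.
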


Finally, one could take an alternative approach on weighted limits in higher categories making use of the machinery of presentable quasi-categories developed by Lurie \cite{htt,higheralgebra}.
When a quasi-category $Q$ is presentable, and also tensored and cotensored over the $(\infty,1)$-category $Kan$ of spaces, it can be understood as forming part of a two-variable \emph{tensor-cotensor-hom adjunction}. With this additional structure, it is possible to define weighted limits by the quasi-categorical analog of the usual end formula, and this is the approach taken by Gepner-Haugseng-Nikolaus in \cite[\textsection2]{GHN}.

A comparison between their approach and ours requires facility with the theory of two-variable adjunctions of quasi-categories and we are not certan whether quasi-categorical analogs of all the $1$-categorical results we would need are in the literature.
Assuming \cref{comparisonGHN,sliceforcotensored}, two quasi-categorical generalizations of classical properties that encode the universal property of the tensor cotensor-hom adjunction and the relation with the quasi-category of weighted cones $Q^p_{\fatslice d}$, we can easily show as  \cref{weightedlimitcotensored} that the Gepner-Haugseng-Nikolaus definition for the notion of weighted limit gives an alternative formula for the notion of weighted limit that appears here.

\addtocontents{toc}{\protect\setcounter{tocdepth}{1}}
\subsection*{Further work}
In forthcoming work, we enhance the techniques from this paper with the aim of defining and studying weighted limits in arbitrary $(\infty,2)$-categories. We will work with Riehl-Verity's \emph{$2$-complicial sets}, which are the model of $(\infty,2)$-categories that is closest to quasi-categories. The identification of suitable  neat and fat weighted join constructions in the new framework allows us to construct a neat and a fat weighted slice construction over a diagram $d\colon J\to K$ in a $2$-complicial set $K$. These constructions are designed and shown to be equivalent models for the $(\infty,2)$-category of weighted cones over $d$, so it makes sense to take the weighted limit of $d$ to be their terminal object. Having understood the correct notion of weighted limits in $2$-complicial sets, we can then proceed to exporting it to different models of $(\infty,2)$-categories and studying how they compare to existing instances of weighted limits in $(\infty,2)$-categories present in the literature.

\addtocontents{toc}{\protect\setcounter{tocdepth}{1}}
\subsection*{Acknowledgements}
The author is grateful to Emily Riehl for suggesting this project and for sharing some of her work in progress with Dominic Verity, as well as for useful discussions on the subject and valuable feedback.
This paper also benefited from conversations with Viktoriya Ozornova.

\section{The (quasi-)category of weighted cones}
\label{The (quasi-)category of weighted cones}

Given an ordinary category $\cC$, the universal property of the limit cone of a diagram $D\colon \cJ\to\cC$ is that it is terminal in the category of cones over that diagram.

The same point of view has been adapted \cite{joyalquasicategories,htt,RV5} in order to make sense of the limit of a diagram $d\colon \cJ\to\cQ$ in a quasi-category $\cQ$, in that the limit of $d$ is the terminal object in the quasi-category of cones over $d$. As we will briefly recall, Joyal \cite{joyalnotes} gave two equivalent models for the quasi-category of cones over the diagram $d$, in the form of a neat slice and in the form of a fat slice over $d$. These constructions arise as right adjoints of the neat and the fat join constructions, respectively.

By further generalizing, though in a different direction, given a suitable weight
one might want to make sense of the (quasi-)category of \emph{weighted} cones over the diagram $d$, so that the weighted limit cone of the diagram will then be defined as its terminal object (cf.~ \cref{weightedlimits}).

The aim of this section is to construct such quasi-category of weighted cones, by means of suitably generalized join constructions. For the sake of exposition, we first discuss in \cref{weightedconescat} the category of weighted cones in the case of an ordinary category, and then move on to the quasi-categorical case in \cref{weightedconesqcat,weightedconesqcatfat}.

\subsection{The category of weighted cones}
\addtocontents{toc}{\protect\setcounter{tocdepth}{2}}
\label{weightedconescat}
Recall from \cite[\textsection3.1]{joyaltheory} that the join of a category ${\cI}$ with a category a category $\cJ$ has set of objects given by
$$\Ob({\cI}\star \cJ)\cong\Ob({\cI})\amalg\Ob(\cJ),$$
and homsets as follows.
\begin{enumerate}[leftmargin=*]
\item The homset between $a\in\cI$ and $b\in\cJ$ is $\Hom_{{\cI}\star \cJ}(a,b):=\ast$.
\item The homset between $a\in\cI$ and $a'\in\cI$ is $\Hom_{{\cI}\star \cJ}(a,a'):=\Hom_{{\cI}}(a,a')$.
\item The homset between $b\in\cJ$ and $b'\in\cJ$ is $\Hom_{{\cI}\star \cJ}(b,b'):=\Hom_{\cJ}(b,b')$.
\item The homset between $b\in\cJ$ and $a\in\cI$ is $\Hom_{{\cI}\star \cJ}(a,b):=\varnothing$.
\end{enumerate}
In particular, the set of morphisms can be identified with the disjoint union
$$\Mor({\cI}\star \cJ)\cong\Mor({\cI})\amalg\Mor(\cJ)\amalg\left(\Ob({\cI})\times \Ob(\cJ)\right).$$
Composition is uniquely determined by declaring that ${\cI}$ and $\cJ$ are subcategories of ${\cI}\star \cJ$.

Roughly speaking, the join of two categories is built by putting two categories next to each other and by adding exactly one morphism from the first one to the second one.
Interesting instances of the join construction are when $\cJ=[0]$, in which case $[0]\star \cJ\cong \cJ^{\triangleleft}$ is obtained by freely adding a terminal object to $\cJ$, and when $\cI=[m]$ and $\cJ=[n]$, in which case $[m]\star[n]\cong[n+1+m]$ recovers the ordinal sum on $\Delta$.

For any category ${\cI}$, there is a canonical inclusion $\cJ\to {\cI}\star \cJ$. In fact, if ${}^{\cJ/}\cat$ denotes the slice category of $\cat$ over $\cJ$, the \emph{join} with a category $\cJ$ defines a functor
$$(-)\star \cJ\colon\cat\to{}^{\cJ/}\cat,$$
As mentioned in \cite[\textsection3.1.1]{joyaltheory}, the join construction admits a right adjoint
$$(-)_{/(-)}\colon{}^{\cJ/}\cat\to\cat$$
that assigns to a diagram $d\colon \cJ\to {\cC}$ the \emph{slice} ${\cC}_{/d}$.
The property of adjointness of
$$(-)\star \cJ\colon\cat\rightleftarrows{}^{\cJ/}\cat\colon(-)_{/(-)}$$
determines the slice construction up to isomorphism.

\begin{defn}
Given a diagram $d\colon \cJ\to {\cC}$, the \emph{category of cones} in ${\cC}$ over the diagram $d\colon \cJ\to {\cC}$ is the slice category ${\cC}_{/d}$.
In particular, the set of objects of ${\cC}_{/d}$ is given by
$$\Ob\left({\cC}_{/d}\right)\cong\Hom_{\cat}\left([0],{\cC}_{/d}\right)\cong\Hom_{{}^{\cJ/}\cat}([0]\star \cJ,{\cC}),$$
and the set of morphisms of ${\cC}_{/d}$ could be unraveled from the expression
$$\Mor\left({\cC}_{/d}\right)\cong\Hom_{\cat}\left([1],{\cC}_{/d}\right)\cong\Hom_{{}^{\cJ/}\cat}([1]\star \cJ,{\cC}).$$
\end{defn}

\begin{rmk}
Given a diagram $d\colon \cJ\to {\cC}$, an object of the category of cones in ${\cC}$ over the diagram $d\colon \cJ\to {\cC}$ is precisely a \emph{cone} over $D$ acccording to the classical terminology of MacLane \cite{maclane}, i.e., a pair $(A,\varphi\colon\Delta_A\to D)$
where $A$ is an object of ${\cC}$ and $\varphi$ is a natural transformation from the constant functor at $A$ to the functor $D$.
\end{rmk}

We now define a weighted version of the join construction, whose corresponding slice construction models a category of cones weighted by any functor $W\colon \cJ\to\set$.
As clarified by the following remark, these functors correspond precisely to discrete fibrations $P\colon\tilde \cJ\to \cJ$.

\begin{rmk}

\label{straighteningunstraighteningcats}

As explained in \cite[Theorem 2.1.2]{LR}, for any category $\cJ$ there is an equivalence of categories
$$\fib\colon Disc Fib_{\cJ}\simeq\Fun(\cJ,\set)\colon el_{\cJ}$$
between functors ${\cJ}\to\set$ and \emph{discrete} fibrations of categories, which are certain fibrations of categories.

Given any discrete fibration $P\colon\tilde {\cJ}\to {\cJ}$, the collections of fibers over the objects of $\cJ$ assemble into a functor $P^{-1}(-)\colon {\cJ}\to\set$.

The category of elements $el_{\cJ}(W)$ is a special case of the Grothendieck construction.
Its objects are pairs $(j,x\in W(j))$ and whose morphisms are arrows $f\colon j\to j'$ in $\cJ$ such that $W(f)(x)=x'$.
The category of elements comes with an obvious projection $P\colon el_{\cJ}W\to {\cJ}$.
In particular, $el_{\cJ}$ respects the terminal object, so the identity of $\cJ$ corresponds to the functor constant at the terminal category.
\end{rmk}

\begin{defn}
\label{weightedjoin}
Let $P\colon\tilde {\cJ}\to {\cJ}$ be a discrete fibration. The \emph{join} of a category ${\cI}$ \emph{weighted} by $P\colon\tilde {\cJ}\to {\cJ}$ is the category defined as follows. The set of objects is given by
$$\Ob({\cI}\star^P{\cJ})\cong\Ob({\cI})\amalg\Ob({\cJ}).$$
The homsets re defined as follows.
\begin{enumerate}[leftmargin=*]
\item The homset between $a\in\cI$ and $b\in\cJ$ is $\Hom_{{\cI}\star \cJ}(a,b):=W(j)$.
\item The homset between $a\in\cI$ and $a'\in\cI$ is $\Hom_{{\cI}\star \cJ}(a,a'):=\Hom_{{\cI}}(a,a')$.
\item The homset between $b\in\cJ$ and $b'\in\cJ$ is $\Hom_{{\cI}\star \cJ}(b,b'):=\Hom_{\cJ}(b,b')$.
\item The homset between $b\in\cJ$ and $a\in\cI$ is $\Hom_{{\cI}\star \cJ}(a,b):=\varnothing$.
\end{enumerate}
In particular, the set of morphisms of the weighted join can be written as a disjoint union
$$\Mor({\cI}\star^P{\cJ})\cong\Mor({\cI})\amalg\Mor({\cJ})\amalg\left(\Ob({\cI})\times\Ob(\tilde {\cJ})\right).$$
The composition law is determined by declaring that ${\cI}$ and ${\cJ}$ are subcategories of the weighted join ${\cI}\star^p {\cJ}$, and using the functoriality of $W$ to define composition of an arrow from $\cI$ to $\cJ$ with an arrow of $\cJ$.
\end{defn}

Roughly speaking, the join of two categories ${\cI}$ and ${\cJ}$ weighted by $p$ is built by putting two categories next to each other and by adding for any $i$ in ${\cI}$ as many arrows as element of $P^{-1}(j)$ that go from $i$ to $j$.

\begin{rmk}
\label{weightedjoinaspushout}
The
weighted join can be written as a pushout of categories
$$\cI\star^P\cJ\cong (\cI\star \tilde \cJ)\amalg^P_{\tilde \cJ}\cJ,$$
as can be seen by direct verification of the universal property of pushouts.
As a special case, by \cref{straighteningunstraighteningcats}, the join weighted by the constant functor at the terminal category concides with the classical join construction.
\end{rmk}

For any category ${\cI}$, there is a canonical inclusion
$${\cJ}\to {\cI}\star\tilde {\cJ}\to (\cI\star \tilde {\cJ})\amalg^P_{\tilde {\cJ}}{\cJ}\cong {\cI}\star^P {\cJ}.$$
In fact, the weighted join construction defines a functor
$$(-)\star^P{\cJ}\colon\cat\stackrel{-\star\tilde {\cJ}}{\longrightarrow}{}^{\tilde {\cJ}/}\cat\stackrel{-\amalg_{\tilde {\cJ}}{\cJ}}{\longrightarrow}{}^{{\cJ}/}\cat.$$

\begin{prop}Let $P\colon\tilde {\cJ}\to {\cJ}$ be a discrete fibration. 
The join weighted by $P\colon\tilde {\cJ}\to {\cJ}$
admits a right adjoint
$$(-)^P_{/(-)}\colon{}^{{\cJ}/}\cat\stackrel{(-)\circ P}{\longrightarrow}{}^{\tilde {\cJ}/}\cat
\stackrel{(-)_{/(-)}}{\longrightarrow}
\cat,$$
that assigns to a diagram $d\colon {\cJ}\to {\cC}$ what we call the \emph{weighted slice} ${\cC}^P_{/_d}$.
\end{prop}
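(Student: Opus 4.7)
The plan is to exhibit $(-)\star^P\cJ$ as a composite of two left adjoints whose right adjoints are already available, and obtain the claimed weighted slice as the corresponding composite of right adjoints. The factorization to exploit is precisely the one provided by \cref{weightedjoinaspushout}, namely
$$(-)\star^P\cJ\colon\cat \stackrel{(-)\star\tilde\cJ}{\longrightarrow} {}^{\tilde\cJ/}\cat \stackrel{(-)\amalg_{\tilde\cJ}\cJ}{\longrightarrow} {}^{\cJ/}\cat,$$
where the second functor sends $F\colon\tilde\cJ\to\cX$ in ${}^{\tilde\cJ/}\cat$ to the canonical map $\cJ\to F\amalg_{\tilde\cJ}\cJ$ arising from the pushout along $P$.

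The first factor, the ordinary join $(-)\star\tilde\cJ$, is already known to be left adjoint to the unweighted slice $(-)_{/(-)}\colon{}^{\tilde\cJ/}\cat\to\cat$, as recalled above from \cite[\textsection3.1.1]{joyaltheory}.

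The second factor is where the only real verification takes place: I would check that the pushout functor $(-)\amalg_{\tilde\cJ}\cJ\colon{}^{\tilde\cJ/}\cat\to{}^{\cJ/}\cat$ is left adjoint to the precomposition functor $(-)\circ P\colon{}^{\cJ/}\cat\to{}^{\tilde\cJ/}\cat$. Given $F\colon\tilde\cJ\to\cX$ and $d\colon\cJ\to\cC$, the universal property of the pushout in $\cat$ bijectively identifies morphisms $F\amalg_{\tilde\cJ}\cJ\to d$ in ${}^{\cJ/}\cat$, i.e.~functors out of the pushout that restrict to $d$ along the canonical map from $\cJ$, with functors $\cX\to\cC$ that restrict along $F$ to $d\circ P$, which is precisely an element of $\Hom_{{}^{\tilde\cJ/}\cat}(F,d\circ P)$. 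Naturality in $F$ and $d$ is immediate from the naturality of the pushout's universal property.

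Composing these two adjunctions then gives that the right adjoint of $(-)\star^P\cJ$ is
$$(-)^P_{/(-)}\colon{}^{\cJ/}\cat \stackrel{(-)\circ P}{\longrightarrow} {}^{\tilde\cJ/}\cat \stackrel{(-)_{/(-)}}{\longrightarrow}\cat,$$
as claimed. No serious obstacle is anticipated: the argument is entirely formal once the pushout factorization of \cref{weightedjoinaspushout} is in hand, and the only nonformal ingredient is the pushout-precomposition adjunction, which is a direct consequence of the universal property of pushouts in $\cat$ together with the definition of the under-category.
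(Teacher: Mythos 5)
Your proof is correct and follows essentially the same route as the paper: factor $(-)\star^P\cJ$ through ${}^{\tilde\cJ/}\cat$ via the ordinary join followed by pushout along $P$, and compose the two resulting adjunctions. The only difference is that you verify the pushout--precomposition adjunction directly from the universal property of the pushout, whereas the paper cites it as the dual of a result in Awodey; your verification is accurate.
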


\begin{proof}
The adjunction
$$(-)\star\tilde {\cJ}\colon\cat\rightleftarrows{}^{\tilde {\cJ}/}\cat\colon(-)_{/(-)}$$
and the (dual of the) adjunction from \cite[Proposition 9.18]{awodey}
$$-\amalg_{\tilde {\cJ}}{\cJ}\colon{}^{\tilde {\cJ}/}\cat\rightleftarrows{}^{{\cJ}/}\cat\colon(-)\circ p$$
compose to form the desired adjunction
\[(-)\star^P{\cJ}\colon\cat\rightleftarrows{}^{\tilde {\cJ}/}\cat\rightleftarrows{}^{{\cJ}/}\cat.\qedhere\]
\end{proof}

\begin{defn}
Given a diagram $d\colon {\cJ}\to {\cC}$, the \emph{category of weighted cones} in ${\cC}$ over the diagram $d\colon {\cJ}\to {\cC}$ is the weighted slice category ${\cC}^P_{/d}$.
In particular, the set of objects of ${\cC}^P_{/d}$ is given by
$$\Ob\left({\cC}^P_{/d}\right)\cong\Hom_{\cat}\left([0],{\cC}^P_{/d}\right)\cong\Hom_{{}^{{\cJ}/}\cat}([0]\star^P {\cJ},{\cC}),$$
and the set of morphisms of ${\cC}^P_{/d}$ could be unraveled from the expression
$$\Mor\left({\cC}^P_{/d}\right)\cong\Hom_{\cat}\left([1],{\cC}^P_{/d}\right)\cong\Hom_{{}^{{\cJ}/}\cat}([1]\star^P {\cJ},{\cC}).$$
\end{defn}

\begin{rmk}
Given a diagram $d\colon {\cJ}\to {\cC}$, and a weight $W\colon {\cJ}\to\set$, an object of the category of weighted cones ${\cC}^{el(W)}_{/D}$ is precisely a \emph{weighted cone} in the sense of \cite{RiehlCHT} and an \emph{indexing diagram} in the sense of \cite[\textsection3]{kelly}, i.e., a natural transformation $W\to\Hom_{{\cC}}(A,D-)$.
\end{rmk}

\begin{ex}
\label{example0}
Let $\Gamma$ denote the cospan shape category, which we display as
$$\xymatrix{a\ar[r]&b&\ar[l]c},$$
and let $\{0\}_{\Gamma}\colon\Gamma\to\set$ be the weight the constant weight
$$\xymatrix{\{0\}\ar[r]&\{0\}&\ar[l]\{0\}}.$$
Given any diagram $D\colon\Gamma\to\cC$ in a category $\cC$ with image
$$\xymatrix{D_{a}\ar[r]&D_{b}&\ar[l]D_{c}}.$$
a weighted cone  over $D$ consists of an object $C$ together with three morphisms $\lambda_a$, $\lambda_b$ and $\lambda_c$ that makes the following diagram commute:
$$\xymatrix{&C\ar[ld]_-{\lambda_a}\ar[d]|-{\lambda_b}\ar[rd]^-{\lambda_c}&\\
D_{a}\ar[r]&D_{b}&\ar[l]D_{c}.}$$
If then $W\colon\Gamma\to\set$ denotes the weight with image
$$\xymatrix{\{0\}\ar@{^{(}->}[r]&\{0,1\}&\ar@{_{(}->}[l]\{1\}},$$
a weighted cone over $D$ consists of an object $C$ together with four morphisms as displayed
$$\xymatrix{&C\ar[ld]_-{\lambda_a}\ar@/^/[d]^(.7){\lambda'_b}\ar@/_/[d]_(.7){\lambda_b}\ar[rd]^-{\lambda_c}&\\
D_{a}\ar[r]&D_{b}&\ar[l]D_{c}}.$$
subject to the condition that the two external triangles commute, but the two arrows $\lambda_b$ and $\lambda'_b$ do not necessarily agree. Note how the weight is encoding how many legs each piece of the diagram $D$ receives from the summit $C$.
\end{ex}

\subsection{The quasi-category of weighted cones}
\label{weightedconesqcat}
Aiming to define a quasi-category of cones over a diagram in a quasi-category, we extend the weighted join and slice constructions to simplicial sets, suitably generalizing Joyal's unweighted versions from \cite{joyalnotes}.

Recall from \cite[\textsection9.7]{joyalnotes} that the join of categories extends to a construction on simplicial sets, in a way that for every category ${\cI},{\cJ}$
$$N{\cI}\star N{\cJ}\cong N({\cI}\star {\cJ})$$
For every simplicial set $I$, the join with $J$ can be written levelwise as
$$(I\star J)_n\cong I_n\amalg J_n\amalg\left(\coprod_{i=0}^n(I_i\times J_{n-1-i})\right)\cong\coprod_{i=-1}^{n-1}(I_i\times J_{n+1-i}),$$
under the convention that $I_{-1}=J_{-1}=\varnothing$.

For any simplicial set $I$, there is a canonical inclusion $J\to I\star J$. In fact, If ${}^{J/}\sset$ denotes the slice category of $\sset$ over $J$, the \emph{join} with a simplicial set $J$ defines a functor
$$(-)\star J\colon\sset\to{}^{J/}\sset.$$
\label{sectionslice}
Recall from \cite[\textsection9.7]{joyalnotes} that the this functor admits a right adjoint
$$(-)_{/(-)}\colon{}^{J/}\sset\to\sset$$
that assigns to a diagram $d\colon J\to Q$ the \emph{slice} $K/_d$.
Moreover, the adjunction
$$(-)\star J\colon\sset\rightleftarrows{}^{J/}\sset\colon(-)_{/(-)}$$
is a Quillen pair between the Joyal model structure, and the corresponding sliced model structure category, as proven in \cite[Lemma 2.4.12]{RV1}.
In particular, any diagram $d\colon J\to Q$ in a quasi-category $Q$ is fibrant, and therefore the slice $Q_{/d}$ is a quasi-category.

\begin{defn}
The \emph{quasi-category of cones} in $Q$ over the diagram $d\colon J\to Q$ is the slice $Q_{/d}$.
The property of adjointness determines the slice construction up to isomorphism. Given a diagram $d\colon J\to Q$, the set of $n$-simplices of $Q_{/d}$ is given by the expression
$$\left(Q_{/d}\right)_n\cong\Hom_{\sset}(\Delta[n],Q_{/d})\cong\Hom_{{}^{J/}\sset}(\Delta[n]\star J,K),$$
which the willing reader could expand using the formula for the weighted join construction from \cref{weightedjoin}. In particular,
$$\left(Q_{/d}\right)_0\cong\Hom_{\sset}(\Delta[0],Q_{/d})\cong\Hom_{{}^{J/}\sset}(\Delta[0]\star J,K).$$
\end{defn}

We now define a weighted version of the join construction, whose corresponding slice construction models a simplicial set of cones weighted by any functor $p\colon\tilde J\to J$. The nature of the weight is clarified by the following remark, which explains why the homotopy theory $\kan$-valued functors is the same as that of left fibrations, which are the quasi-categorical analog of discrete fibrations of categories. $J$-valued left fibrations are the fibrant objects for a suitable model structure on $\sset_{/_J}$, called the \emph{covariant model structure}; we refer the reader to \cite[\textsection2.1]{htt} or \cite[\textsection2.3]{hm2} for a detailed account on this model structure.

\begin{rmk}
\label{straighteningunstraighteningqcats}
We record here the two instances of correspondances between simplicial functors and left fibrations in the quasi-categorical context.
\begin{itemize}[leftmargin=*]
	\item[(a)] Heuts and Moerdijk show in \cite[Theorem C]{hm} that for any category $\cJ$ there is a Quillen equivalence
	$$r_!\colon\sset_{/N\cJ}\rightleftarrows\Fun(\cJ,\sset)\colon r^*$$
between the covariant model structure and the projective model structure. This descends to an equivalence of homoopy theories
$$Left Fib_{/N\cJ}\simeq\Fun(\cJ,\kan)$$
between left fibrations $p\colon\tilde J\to N\cJ$ and $\kan$-valued functors $W\colon\cJ\to\kan$.

Given any functor $p\colon\tilde {J}\to N{\cJ}$, the \emph{rectification} $r_!(p)$ is obtained by taking a specific model of the homotopy fibers of $p$, i.e.,
$$r_!(p)(j):=\tilde J\times_{N\cJ} N(\cJ_{/_j})\simeq \tilde J\times^h_{N\cJ} N(\cJ_{/_j})\simeq\tilde J\times^h_{N\cJ}\{j\}=\hofib_j(p),$$
given that $N(\cJ_{/_j})\to N\cJ$ is a Kan-fibration and $N(\cJ_{/_j})\simeq\{j\}$.

Given any functor $W\colon \cJ\to\kan$, the construction $r^*(W)$ coincides with Lurie's relative nerve $N_W(\cJ)$ from \cite[\textsection 3.2.5]{htt}, as mentioned in in the introduction of \cite{hm}.
The property of adjointness determines the relative nerve construction up to isomorphism. The set of $n$-simplices of $r^*(W)$ over an $n$-simplex $\sigma$ of $NC$ is given by the expression
$$\begin{array}{rcl}
r^*(W)|_{\sigma}&\cong&\Hom((\sigma\colon\Delta[n]\to N\cJ),r^*(W\colon\cJ\to\sset))\\
&\cong&\Hom(r_!(\sigma\colon\Delta[n]\to N\cJ),(W\colon\cJ\to\sset)).
\end{array}$$
In particular, $r^*$ respects the terminal object, so the identity of $\cJ$ corresponds to the functor constant at the terminal category.
	\item[(b)] Lurie in \cite[Theorem 2.2.1.2]{htt} and Heuts-Moerdijk in \cite[Theorem C]{hm2} show that for any simplicial set $J$
	there is a Quillen equivalence
$$St\colon\sset_{/_J}\rightleftarrows s\Fun(\mathfrak C[J],\sset)\colon Un$$
between the covariant model structure on $\sset_{/_J}$ and the projective model structure on $s\Fun(\mathfrak C[J],\sset)$. Here, $\mathfrak C$ denotes the left adjoint to the homotopy coherent nerve functor $\mathfrak N$.
The Quillen equivalence descends to an equivalence of homotopy theories
$$Left Fib_{/_J}\simeq\sset\Fun(\mathfrak C[J],\kan)$$
between left fibrations $p\colon\tilde J\to J$ and simplicial functors $W\colon\mathfrak C[J]\to\kan$.

Given any functor $p\colon\tilde {J}\to N{\cJ}$, the straightening construction $St(p)$ is given by
$$\begin{array}{rcl}
St(p)(j)&:=&\Map_{\mathfrak C[\Delta[0]\star\tilde J]\amalg^{\mathfrak C[p]}_{\mathfrak C[\tilde J]}\mathfrak C[J]}(0,j)\\
&\cong&\Map_{\mathfrak C[(\Delta[0]\star\tilde J)\amalg^p_{\tilde J}J]}(0,j)\\
&=&\Map_{\mathfrak C[\Delta[0]\star^p J]}(0,j).
\end{array}$$
Given any functor $W\colon\mathfrak C[J]\to\sset$, the property of adjointness determines the unstraightening construction up to isomorphism. The set of $n$-simplices of $Un(W)$ over an $n$-simplex $\sigma$ of $X$ is given by the expression
$$\begin{array}{rcl}
Un(W)|_{\sigma}&\cong&\Hom(\sigma\colon\Delta[n]\to J,Un(W\colon\mathfrak C[J]\to\sset))\\
&\cong&\Hom(St(\sigma\colon\Delta[n]\to J),(W\colon\mathfrak C[J]\to\sset)).
\end{array}$$
In particular, $Un$ respects the terminal object, so the unstraightening of the identity of $J$ is the functor constant at the terminal category. If $J=N\cJ$ is the nerve of a category $\cJ$, the unstraightening construction coincides with the relative nerve.
\end{itemize}
\end{rmk}

We therefore focus on weights in the form of any simplicial map $p\colon\tilde J\to J$ (without requiring any further condition), and inspired by \cref{weightedjoinaspushout},
we define the weighted join as follows.

\begin{defn}
\label{weightedjoin} Let $p\colon\tilde J\to J$ be a simplicial map.
The \emph{join} of a simplicial set $I$ with $J$ \emph{weighted} by $p\colon\tilde J\to J$ is the simplicial set
$$I\star^pJ:=(I\star \tilde J)\amalg^p_{\tilde J}J.$$
It can be written levelwise as
$$(I\star^pJ)_n\cong I_n\amalg J_n\amalg\left(\coprod_{i=0}^n(I_i\times\tilde J_{n-1-i})\right).$$
For any simplicial set $I$, there is a canonical inclusion
$$J\to I\star\tilde J\to(I\star \tilde J)\amalg^p_{\tilde J}J=I\star^pJ.$$
In fact, the weighted join construction defines a functor
$$(-)\star^pJ\colon\sset\stackrel{-\star\tilde J}{\longrightarrow}{}^{\tilde J/}\sset\stackrel{-\amalg_{\tilde J}J}{\longrightarrow}{}^{J/}\sset.$$
\end{defn}

\begin{rmk}
There is a good compatibility between the nerve construction and the weighted join construction. Indeed, given categories $\cI$ and $\cJ$ and a discrete fibration  $P\colon\widetilde{\cJ}\to\cJ$, nerves of categories do not see the difference between $N(\cI\star^P\cJ)$ and $(N\cI)\star^{NP}(N\cJ)$;  indeed, for any category $\cC$ there is a canonical isomorphism
$$\Hom_{\sset}(N(\cI\star^P\cJ),N\cC)\cong\Hom_{\sset}((N\cI)\star^{NP}(N\cJ),N\cC).$$
We suspect that in fact nerve and weighted join construction commute with each other, but did not investigate this further.
\end{rmk}

We prove a weighted variant of Joyal's adjunction.

\begin{prop}
\label{adjunctionweightedjoinslice}
The join weighted by $p\colon\tilde J\to J$
admits a right adjoint given by
$$(-)^p_{/(-)}\colon{}^{J/}\sset\stackrel{(-)\circ p}{\longrightarrow}{}^{\tilde J/}\sset
\stackrel{(-)_{/(-)}}{\longrightarrow}
\sset.$$
Moreover, the adjunction
$$(-)\star^pJ\colon\sset\rightleftarrows{}^{\tilde J/}\sset\colon(-)^p_{/(-)}$$
is a Quillen pair between the Joyal model structure and the corresponding sliced model structure, in which fibrations, cofibrations and weak equivalences are created by forgetting down to $\sset$.
\end{prop}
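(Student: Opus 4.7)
The plan is to recognize that both the left and the right adjoint are presented in the statement as composites, and to verify the two claims (existence of the right adjoint, and the Quillen property) one factor at a time.

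For the adjunction claim, I would first note that from \cref{weightedjoin} the functor $(-)\star^pJ$ is by construction the composite
$$\sset\stackrel{-\star\tilde J}{\longrightarrow}{}^{\tilde J/}\sset\stackrel{-\amalg_{\tilde J}J}{\longrightarrow}{}^{J/}\sset.$$
So it suffices to exhibit a right adjoint to each factor. The first factor $(-)\star\tilde J$ has right adjoint $(-)_{/(-)}$ by the classical Joyal adjunction recalled at the beginning of \cref{weightedconesqcat}. The second factor is the pushout functor $X\mapsto X\amalg_{\tilde J}J$ on coslices, whose right adjoint is precomposition with $p$, i.e.~the functor sending $(J\to Y)$ to $(\tilde J\stackrel{p}{\to}J\to Y)$; this is a special case of the dual of \cite[Proposition~9.18]{awodey}. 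Composing these two adjunctions gives exactly the right adjoint $(-)^p_{/(-)}$ displayed in the statement.

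For the Quillen pair claim, I would verify the Quillen condition factor by factor, then use that composites of Quillen adjunctions are Quillen adjunctions. Joyal's adjunction $(-)\star\tilde J\dashv(-)_{/(-)}$ is a Quillen pair between the Joyal model structure and the sliced Joyal model structure on ${}^{\tilde J/}\sset$ by \cite[Lemma~2.4.12]{RV1}. For the change-of-base adjunction between the coslice categories ${}^{\tilde J/}\sset$ and ${}^{J/}\sset$, I would argue that the left adjoint $-\amalg_{\tilde J}J$ preserves cofibrations and trivial cofibrations: indeed, both classes in the sliced model structures are created by the forgetful functors to $\sset$, and the pushout of a (trivial) cofibration along any map is again a (trivial) cofibration by the cobase change property of the Joyal model structure. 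Hence this second adjunction is also Quillen, and the composite is Quillen between the Joyal model structure on $\sset$ and the sliced Joyal model structure on ${}^{J/}\sset$.

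I do not anticipate a substantive obstacle: the whole proof is a packaging of two known adjunctions and their Quillen-pair properties. The only mildly delicate point is to make sure that the sliced model structures on the two coslice categories ${}^{\tilde J/}\sset$ and ${}^{J/}\sset$ are the ones created by the forgetful functor to $\sset$ (so that both the Joyal adjunction and the change-of-base adjunction are Quillen for \emph{the same} sliced structures); this is standard, but I would spell it out in one sentence to avoid ambiguity.
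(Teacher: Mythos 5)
Your proposal is correct and follows essentially the same route as the paper: the same factorization of $(-)\star^pJ$ through ${}^{\tilde J/}\sset$, the same appeal to the dual of \cite[Proposition 9.18]{awodey} and to \cite[Lemma 2.4.12]{RV1}, and composition of the two resulting Quillen pairs. The only (immaterial) difference is that for the change-of-base adjunction you check that the left adjoint $-\amalg_{\tilde J}J$ preserves (trivial) cofibrations via cobase change, whereas the paper observes more directly that the right adjoint $(-)\circ p$ leaves the underlying simplicial set unchanged and hence preserves fibrations and weak equivalences.
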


\begin{proof}
The (dual of the) adjunction from \cite[Proposition 9.18]{awodey}
$$-\amalg_{\tilde J}J\colon{}^{\tilde J/}\sset\rightleftarrows{}^{J/}\sset\colon(-)\circ p,$$
is a Quillen pair. Indeed, the right adjoint does not change the underlying simplicial set, and therefore respects weak equivalences and fibrations.
Composing this Quillen pair with the Quillen pair from \cite[Lemma 2.4.12]{RV1}
$$(-)\star\tilde{J}\colon\sset\rightleftarrows{}^{\tilde J/}\sset\colon(-)_{/(-)}$$
we obtain the desired Quillen pair
\[(-)\star^pJ\colon\sset\rightleftarrows{}^{\tilde J/}\sset\rightleftarrows{}^{J/}\sset.\qedhere\]
\end{proof}

\begin{rmk}
Any diagram $d\colon J\to Q$ in a quasi-category $Q$ is fibrant, and therefore for any weight $p\colon\tilde J\to J$ the weighted slice $Q^p_{/d}$ is a quasi-category.
\end{rmk}

\begin{defn}
The \emph{quasi-category of weighted cones} in $Q$ over the diagram $d\colon J\to Q$ is the weighted slice $Q^p_{/d}$.
The property of adjointness determines the weighted slice construction up to isomorphism. Given a diagram $d\colon J\to Q$, the set of $n$-simplices of $Q^p_{/d}$ is given by the expression
$$\left(Q^p_{/d}\right)_n\cong\Hom_{\sset}(\Delta[n],Q^p_{/d})\cong\Hom_{{}^{J/}\sset}(\Delta[n]\star^p J,Q),$$
which the willing reader could expand using the formula for the weighted join construction from \cref{weightedjoin}. In particular,
$$\left(Q^p_{/d}\right)_0\cong\Hom_{\sset}(\Delta[0],Q^p_{/d})\cong\Hom_{{}^{J/}\sset}(\Delta[0]\star^p J,Q),$$
which allows us to think of the weighted slice $Q^p_{/d}$ as a simplicial set of cones in $Q$ over the diagram $d\colon J\to Q$ weighted by $p$.
\end{defn}

\begin{rmk}
The weighted slice construction generalizes at once Joyal's slice construction (to which it specializes when the weight is the identity of $J$), and the weighted slice construction for categories that we gave in \cref{weightedconescat} (to which it specializes when all the involved simplicial sets are nerves of categories and the weight is the nerve of a discrete fibration).
\end{rmk}

\begin{ex}
\label{example1}
Denote by $\tilde J$ the simplicial set obtained by gluing a standard $1$-simplex to a standard $2$-simplex as displayed.
\[
\begin{tikzpicture}
[outer sep=0.3]
     \draw[fill] (1,0) circle (1pt) node(a1){};
     \draw[fill] (2,0) circle (1pt) node(a2){};
     \draw[fill] (3,0) circle (1pt) node(a3){};
     \draw[fill] (2.5,-0.5) circle (1pt) node(a4){};
     
     \draw[-stealth] (a1)--(a2);
     \draw[-stealth] (a3)--(a2);
     \draw[-stealth] (a4)--(a2);
     \draw[-stealth] (a3)--(a4);
     \end{tikzpicture}
     \]
Consider the weight
$$p\colon\xymatrix@C=2cm{\tilde J:=\Delta[1]\amalg_{\Delta[0]}\Delta[1]\ar[r]^-{\Delta[1]\amalg_{\Delta[0]}d}&\Delta[1]\amalg_{\Delta[0]}\Delta[2]\cong\Lambda^2[2]}$$
that collapsed the $2$-simplex to its first face by collapsing second face to a vertex.
This weight is the nerve of the Grothendieck construction of the functor $W\colon\Gamma\to\cat$ with image
$$\xymatrix{[0]\ar[r]^-1&[1]&\ar[l]_-{0}[0].}$$
Given any diagram $d\colon\Lambda^2[2]\to Q$ in a quasi-category $Q$ with image
$$\xymatrix{a\ar[r]^-f&b&\ar[l]_-gc.}$$
a weighted cone over $d$ consists of a vertex $\ell$ of $Q$ 
together with a $2$-simplex in $Q$ and a $3$-simplex in $Q$ glued along a $1$-simplex such that the $1$-skeleton looks as follows:
 \[
     \begin{tikzcd}
     &&l\arrow[lld]\arrow[ld]\arrow[rd]\arrow[dd]&\\
     a \arrow[r, "f" swap]&b&&C\arrow[ld, "g"] \arrow[ll,crossing over]\\
     &&b\arrow[lu, "s_0b"]&
     \end{tikzcd}
     \]
Again, the weight is encoding how many $1$- and $2$-simplices each piece of the diagram $d$ receives from the summit $q$.
\end{ex}

\subsection{The quasi-category of fat weighted cones}
\label{weightedconesqcatfat}
In this subsection we generalize Joyal's fat join and fat slice constructions, providing a weighted version.
While they will be shown to be equivalent to their neat versions, we will see that the quasi-categories of weighted cones defined in terms of the fat join and slice makes sense in other models of $(\infty,1)$-categories rather than just quasi-categories.

Recall from \cite[\textsection9.17]{joyalnotes} that the \emph{fat join} with a simplicial set $J$ is the functor
$$(-)\diamond J\colon\sset\to{}^{J/}\sset$$
defined by the pushout
$$\xymatrix{(I\times J)\amalg(I\times J)\ar[r]\ar[d]&(I\times\Delta[1]\times J)\ar[d]\\
(I\amalg J)\ar[r]&I\diamond J}$$
of the folding map of $I\times J$ along the canonical map
$$(I\times J)\amalg(I\times J)\cong(I\times J)\times\partial\Delta[1]\to\Delta[1]\times I\times J\cong I\times\Delta[1]\times J.$$

Recall from \cite[\textsection9.17]{joyalnotes} that the fat join construction admits a right adjoint
$$(-)_{\fatslice (-)}\colon{}^{J/}\sset\to\sset$$
that assigns to a diagram $d\colon J\to Q$ the \emph{fat slice} $Q_{\fatslice d}$.
Moreover, the adjunction
$$(-)\diamond J\colon\sset\rightleftarrows{}^{\tilde J/}\sset\colon(-)_{\fatslice (-)}$$
is a Quillen pair between the Joyal model structure and the corresponding sliced model structure, as is proven in \cite[Lemma 2.4.12]{RV1}.

\begin{defn}
\label{weightedfatjoin}
The \emph{fat join} of a simplicial set $I$ with $J$ \emph{weighted} by $p\colon\tilde J\to J$ is the simplicial set
$$I\diamond^pJ:=(I\diamond \tilde J)\amalg^p_{\tilde J}J,$$
which comes with a canonical map $J\to I\diamond^pJ$.
The fat weighted join construction defines a functor
$$(-)\diamond^pJ\colon\sset\stackrel{-\diamond \tilde J}{\longrightarrow}{}^{\tilde J/}\sset\stackrel{-\amalg_{\tilde J}J}{\longrightarrow}{}^{J/}\sset.$$

\end{defn}

\begin{rmk}
\label{fatjoinwithDelta0}
The weighted fat join with $\Delta[0]$ can be understood as the pushout
$$\Delta[0]\diamond^pJ:=(\Delta[0]\diamond \tilde J)\amalg^p_{\tilde J}J\cong\left(\Delta[0]\amalg_{\Delta[1]}(\Delta[1]\times\tilde J)\right)\amalg^p_{\tilde J}J.$$
\end{rmk}

The following is a weighted variant of Joyal's adjunction, and can be proven similarly to \cref{adjunctionweightedjoinslice}

\begin{thm}
\label{adjunctionweightedfatjoinslice}
The join weighted by $p\colon\tilde J\to J$
admits a right adjoint
$$(-)^p_{\fatslice (-)}\colon{}^{J/}\sset\stackrel{(-)\circ p}{\longrightarrow}{}^{\tilde J/}\sset
\stackrel{(-)_{\fatslice (-)}}{\longrightarrow}
\sset.$$
Moreover, the adjunction
$$(-)\diamond^p J\colon\sset\rightleftarrows{}^{J/}\sset\colon(-)^p_{\fatslice (-)}$$
is a Quillen pair between the Joyal model structure and the corresponding sliced model structure.
\end{thm}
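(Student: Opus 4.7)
The plan is to mirror the proof of \cref{adjunctionweightedjoinslice}, replacing the neat join/slice Quillen pair with its fat counterpart. First I would unwind the definition: by \cref{weightedfatjoin}, the functor $(-)\diamond^p J$ is the composite
$$\sset \xrightarrow{(-)\diamond \tilde J} {}^{\tilde J/}\sset \xrightarrow{-\amalg_{\tilde J} J} {}^{J/}\sset,$$
so to exhibit a right adjoint it suffices to compose right adjoints of each factor in the reverse order. The second factor $-\amalg_{\tilde J} J$ is a pushout functor on a slice category and its right adjoint is the pullback/restriction along $p$, namely $(-)\circ p \colon {}^{J/}\sset \to {}^{\tilde J/}\sset$, as in the (dual of the) adjunction of \cite[Proposition 9.18]{awodey}. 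The first factor $(-)\diamond \tilde J$ has right adjoint the fat slice $(-)_{\fatslice(-)}$ by \cite[\textsection9.17]{joyalnotes}. Composing these gives precisely the claimed formula for $(-)^p_{\fatslice(-)}$.

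For the Quillen pair statement, I would check that each of the two adjunctions in the composite is itself a Quillen pair with respect to the Joyal and sliced Joyal model structures involved. The adjunction $-\amalg_{\tilde J} J \dashv (-)\circ p$ is a Quillen pair essentially for free: its right adjoint $(-)\circ p$ does not change the underlying simplicial set, and since fibrations and weak equivalences in the sliced model structures are created by the forgetful functors to $\sset$, the right adjoint preserves both classes. The adjunction $(-)\diamond \tilde J \dashv (-)_{\fatslice(-)}$ is the fat-join Quillen pair from \cite[Lemma 2.4.12]{RV1}. The composite of two Quillen pairs is a Quillen pair, which gives the result.

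The only step that requires a moment of care is verifying that the sliced Joyal model structure on ${}^{\tilde J/}\sset$ used by the two Quillen pairs agrees: in both factors the model structure is the one in which fibrations, cofibrations, and weak equivalences are detected by the forgetful functor to $\sset$ equipped with the Joyal model structure, so there is no compatibility issue. Since every diagram $d\colon J\to Q$ into a quasi-category is fibrant in ${}^{J/}\sset$, it will follow immediately that $Q^p_{\fatslice d}$ is a quasi-category, as needed for the discussion of the quasi-category of fat weighted cones in the next subsection. The main ``obstacle'' is purely bookkeeping: keeping track that the pushout-along-$p$ adjunction indeed lands where claimed and that its Quillen pair status transfers through the composite; no new Joyal-theoretic input is required beyond \cite[Lemma 2.4.12]{RV1}.
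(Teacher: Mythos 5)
Your proof is correct and follows exactly the route the paper intends: the paper states that this theorem ``can be proven similarly to'' \cref{adjunctionweightedjoinslice}, and your argument is precisely that proof with the neat join/slice Quillen pair of \cite[Lemma 2.4.12]{RV1} replaced by its fat counterpart, composed with the pushout/restriction Quillen pair whose right adjoint preserves fibrations and weak equivalences because it leaves the underlying simplicial set unchanged.
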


\begin{rmk}
Any diagram $d\colon J\to Q$ in a quasi-category $Q$ is fibrant, and therefore for any weight $p\colon\tilde J\to J$ the weighted slice $Q^p_{/d}$ is a quasi-category.
\end{rmk}

\begin{defn}
The \emph{quasi-category of fat weighted cones} in $Q$ over the diagram $d\colon J\to Q$ is the fat weighted slice $Q^p_{\fatslice d}$.
The property of adjointness determines the weighted fat slice construction up to isomorphism. Given a diagram $d\colon J\to Q$, the set of $n$-simplices of $Q^p_{\fatslice d}$ is given by the expression
$$\left(Q^p_{\fatslice d}\right)_n\cong\Hom_{\sset}(\Delta[n],Q^p_{\fatslice d})\cong\Hom_{{}^{J/}\sset}(\Delta[n]\star^p J,Q),$$
which the willing reader could expand using the formula for the weighted fat join construction from \cref{weightedfatjoin}. In particular,
$$\left(Q^p_{\fatslice d}\right)_0\cong\Hom_{\sset}(\Delta[0],Q^p_{\fatslice d})\cong\Hom_{{}^{J/}\sset}(\Delta[0]\star^p J,Q),$$
which allows us to think of the weighted fat slice $Q^p_{\fatslice d}$ as a simplicial set of cones in $Q$ over the diagram $d\colon J\to Q$ weighted by $p$.
\end{defn}

\begin{rmk}
Likewise its neat version, the weighted fat slice construction generalizes at once Joyal's fat slice construction and the weighted slice construction for categories that we gave in \cref{weightedconescat}.
\end{rmk}

\begin{ex}
Consider the weight
$$p\colon\Delta[1]\amalg_{\Delta[0]}\Delta[2]\to\Delta[1]\amalg_{\Delta[0]}\Delta[1]=\Lambda^2[2]$$
as in \cref{example1}.
Given any diagram $d\colon\Lambda^2[2]\to Q$ in a quasi-category $Q$ with image
$$\xymatrix{a\ar[r]^-f&b&\ar[l]_-gc}.$$
a weighted cone over $d$ consists of a vertex $\ell$ of $Q$ together with a cube $\square[2]:=\Delta[1]\times\Delta[1]$ in $Q$ and a prism $\Delta[2]\times \Delta[1]$ in $Q$ glued along copy of $\Delta[0]\times\Delta[1]$ subject to further conditions. For instance, the restriction to the top face of the square $\Delta[1]\times\Delta[0]$ and to the top face of the prism $\Delta[2]\times \Delta[0]$ is degenerate over $\ell$, and
the whole $1$-skeleton has looks as follows.
\[
     \begin{tikzcd}
     l\arrow[rr, "s_0l"]\arrow[dd]&&l\arrow[dd]&&l\arrow[dd]\arrow[ll, "s_0l" swap]\arrow[dl, "s_0l"]\\
     &\hphantom{l}&&l\arrow[ul, "s_0l"]&\\
     a\arrow[rr, "f" swap]&&b&&c\arrow[ll]\arrow[dl, "g"]\\
     &&&b\arrow[ul, "s_0b"]&
     %
     \arrow[from=2-4, to=4-4, crossing over]
     \end{tikzcd}
     \]
\end{ex}

As in \cite{RV2},
we denote by $\Delta_Q\downarrow_{Q^J}\id_{Q^J}$ or just $\Delta_Q\downarrow_{Q^J}Q^J$ the comma quasi-category\footnote{In general, the \emph{comma quasi-category} for any two given functors with the same target appears itself as weighted limit in the simplicial category $\qcat$, and is reviewed in \cref{comma}.} of the diagonal functor $\Delta_Q\colon Q\to Q^J$ and the identity of $Q^J$, 
and by $\Delta_{Q}\downarrow_{Q^J}d$ the comma quasi-category of the diagonal functor $\Delta_Q$ and the diagram $d$ seen as a simplicial map $d\colon\Delta[0]\to Q^J$. They are defined as the following pullbacks:
$$
\xymatrix{\Delta_Q\downarrow_{Q^J}Q^J\ar[d]_-{(p_0,p_1)}\ar[r]&Q^{J\times\Delta[1]}\ar@{->>}[d]^-{(p_0,p_1)}\\
Q\times Q^J\ar[r]_-{\Delta_Q\times Q^J}&Q^J\times Q^J}\quad\text{ and }\quad
\quad\xymatrix{\Delta_{Q}\downarrow_{Q^J}d\ar[d]_-{(p_0,p_1)}\ar[r]&Q^{J\times\Delta[1]}\ar@{->>}[d]^-{(p_0,p_1)}\\
Q\times\Delta[0]\ar[r]_{\Delta_Q\times d}&Q^J\times Q^J,}$$
where $p_0,p_1\colon Q^{J\times\Delta[1]}\cong(Q^J)^{\Delta[1]}\to Q^{J}$ denote the domain and codomain projections, respectively.
By \cite[Lemma 4.2.3]{RVbook}, there is an isomorphism of quasi-categories
$$Q^{\Delta[0]\diamond J}\cong\Delta_Q\downarrow_{Q^J}Q^J\cong (Q\times Q^J)\times_{Q\times Q}Q^{\Delta[1]}.$$
Pulling back this isomorphism, the fat slice of a diagram $d\colon J\to Q$ can be identified with the comma quasi-category of the diagonal functor and $d$,
$$Q_{\fatslice d}\cong\Delta_{Q}\downarrow_{Q^J}d\cong (Q\times\Delta[0])\times_{Q\times Q}Q^{\Delta[1]}.$$

We now prove a weighted version of this fact.

\begin{prop}
\label{fatslicevscomma}
Let $d\colon J\to Q$ be a diagram in $Q$.
There is an isomorphism
\begin{equation}
\label{equationfatslice}
Q^p_{\fatslice d}\cong\Delta_{Q}\downarrow_{Q^{\tilde J}}(d\circ p)=Q_{\fatslice _{}d\circ p}.
\end{equation}
\end{prop}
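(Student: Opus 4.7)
The plan is to read off the isomorphism directly from the factorization of the right adjoint given in \cref{adjunctionweightedfatjoinslice}, and then apply the already-recalled classical identification of the unweighted fat slice as a comma quasi-category.

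More precisely, \cref{adjunctionweightedfatjoinslice} exhibits the right adjoint $(-)^p_{\fatslice (-)}$ as the composite
\[
{}^{J/}\sset\stackrel{(-)\circ p}{\longrightarrow}{}^{\tilde J/}\sset\stackrel{(-)_{\fatslice (-)}}{\longrightarrow}\sset.
\]
Applied to the diagram $d\colon J\to Q$ viewed as an object of ${}^{J/}\sset$, the first functor produces $d\circ p\colon\tilde J\to Q$ as an object of ${}^{\tilde J/}\sset$, and the second functor returns its unweighted fat slice. This already yields the on-the-nose equality $Q^p_{\fatslice d}=Q_{\fatslice d\circ p}$. If one prefers to verify this directly from \cref{weightedfatjoin}, at level $n$ an element of $(Q^p_{\fatslice d})_n$ is a map $(\Delta[n]\diamond\tilde J)\amalg^p_{\tilde J}J\to Q$ under $J$, which by the universal property of the pushout is the same data as a map $\Delta[n]\diamond\tilde J\to Q$ whose restriction along $\tilde J\hookrightarrow\Delta[n]\diamond\tilde J$ agrees with $d\circ p$, and this is exactly an element of $(Q_{\fatslice d\circ p})_n$; naturality in $n$ is automatic, so this delivers an isomorphism of simplicial sets.

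The remaining identification $Q_{\fatslice d\circ p}\cong\Delta_Q\downarrow_{Q^{\tilde J}}(d\circ p)$ is the classical fact the author just recalled before the statement, obtained from the unweighted fat join/fat slice adjunction together with \cite[Lemma 4.2.3]{RVbook}, applied to the diagram $d\circ p\colon\tilde J\to Q$. Chaining the two isomorphisms produces the displayed equation~\eqref{equationfatslice}. Since both steps are formal, there is no real obstacle here; the content of the proposition is that the pushout defining $\diamond^p$ is rigged precisely so that its right adjoint literally agrees with the unweighted fat slice of the reindexed diagram $d\circ p$.
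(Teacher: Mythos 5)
Your proof is correct and follows essentially the same route as the paper: both arguments rest on the pushout definition of the weighted fat join and on the previously recalled identification of the unweighted fat slice with the comma quasi-category, here applied to $d\circ p\colon\tilde J\to Q$. The only (cosmetic) difference is that you read the identification $Q^p_{\fatslice d}=Q_{\fatslice d\circ p}$ directly off the factorization of the right adjoint in \cref{adjunctionweightedfatjoinslice}, whereas the paper obtains it by computing the cotensor $Q^{\Delta[0]\diamond^p J}$ as an iterated pullback and then pulling back along $d$.
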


\begin{proof}
Using the expression from \cref{fatjoinwithDelta0} of $\Delta[0]\diamond^p J$ as a pushout, we obtain an isomorphism of quasi-categories
$$Q^{\Delta[0]\diamond^p J}\cong Q\times^{\Delta}_{Q^{\tilde J}}Q^{\Delta[1]\times\tilde J}\times^{p^*}_{Q^{\tilde J}} Q^J.$$
By further pulling back along $d\colon\Delta[0]\to Q^J$, we obtain the isomorphism
\[Q^p_{\fatslice d}\cong Q\times^{\Delta}_{Q^{\tilde J}}Q^{\Delta[1]\times\tilde J}\times^{d\circ p}_{Q^{\tilde J}}\Delta[0]\cong\Delta_{Q}\downarrow_{Q^{\tilde J}}(d\circ p)=Q_{\fatslice _{}d\circ p}.\qedhere\]
\end{proof}

\begin{digression}
The formula for the fat slice established in the proposition above makes sense in any category endowed with simplicial cotensors and certain pullbacks.
One can therefore make sense of the $\infty$-category of weighted cones over any diagram $d\colon J\to Q$ in any $(\infty,2)$-category presented by a suitably complete $\qcat$-enriched category (e.g.~the category of complete Segal spaces with the enrichment discussed in \cite[Example 2.2.5]{RV4}) by means of the formula (\ref{equationfatslice}).
\end{digression}

\subsection{Comparison of models}
\label{Comparison of models}
The aim of this subsection is to show that the neat slice and the fat slice over a diagram $d\colon J\to Q$ are equivalent.

Recall from \cite[\textsection9.18]{joyalnotes} that for any simplicial set $I$ there is a categorical equivalence
$$I\diamond J\stackrel{\simeq}{\longrightarrow}I\star J$$
between the fat join and the neat join, which is natural in $I$. This is also true in the weighted case.

\begin{prop}
\label{equivalencefatneatjoin}
For any simplicial set $I$ there is a categorical equivalence
$$I\diamond^pJ\stackrel{\simeq}{\longrightarrow}I\star^pJ,$$
which is natural in $I$.
\end{prop}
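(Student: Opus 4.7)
The plan is to promote Joyal's unweighted comparison $\varphi_I \colon I \diamond \tilde J \stackrel{\simeq}{\to} I \star \tilde J$ from \cite[\textsection9.18]{joyalnotes} through the common defining pushout of the weighted variants. Both $I \diamond^p J$ and $I \star^p J$ arise as pushouts of spans whose lower leg $p \colon \tilde J \to J$ is shared, and whose upper legs are the canonical inclusions $\tilde J \to I \diamond \tilde J$ and $\tilde J \to I \star \tilde J$, respectively. Since $\varphi_I$ is natural in $I$ and commutes with the inclusions of $\tilde J$, it assembles into a morphism of spans from the former to the latter, so that taking pushouts produces a natural candidate map $\psi_I \colon I \diamond^p J \to I \star^p J$ whose restriction to the summand $J$ is the identity.

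To see that $\psi_I$ is a categorical equivalence, I would appeal to the gluing lemma in the Joyal model structure, which is left proper since every simplicial set is cofibrant. The induced morphism of spans is componentwise a categorical equivalence: the maps on $\tilde J$ and on $J$ are identities, while $\varphi_I$ is a categorical equivalence by Joyal. For the gluing lemma to apply, it remains to verify that the upper legs $\tilde J \hookrightarrow I \diamond \tilde J$ and $\tilde J \hookrightarrow I \star \tilde J$ are cofibrations, which reduces to checking that they are monomorphisms of simplicial sets. For the neat join this is immediate from the levelwise formula recalled in \cref{sectionslice}; for the fat join it follows from the defining pushout, since the inclusion $\tilde J \hookrightarrow I \diamond \tilde J$ factors as $\tilde J \hookrightarrow I \amalg \tilde J \to I \diamond \tilde J$ with both arrows monomorphisms (the second as a cobase change of a monomorphism).

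Naturality in $I$ is then automatic, since $\varphi_I$ is natural and pushouts are functorial. I expect no genuine obstacle here: the entire substance of the proposition is already packaged inside Joyal's unweighted equivalence, and the only work is to confirm left properness applies, which amounts to the transparent observation that the two canonical inclusions of $\tilde J$ are monomorphisms.
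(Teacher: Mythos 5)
Your proposal is correct and follows essentially the same route as the paper: the paper also reduces to the unweighted equivalence $I\diamond\tilde J\to I\star\tilde J$ and invokes the fact that pushouts along cofibrations are homotopy pushouts in the (left proper) Joyal model structure, citing \cite[Proposition A.2.4.4]{htt} where you phrase it as the gluing lemma. Your explicit verification that the two inclusions of $\tilde J$ are monomorphisms is a detail the paper leaves implicit, but the substance is identical.
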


\begin{proof}
By \cite[Proposition A.2.4.4]{htt}
pushouts where one leg is a cofibration are homotopy pushouts. We conclude using the definitions of the two weighted join constructions and the fact that the map $I\diamond\tilde J\to I\star\tilde J$ between the unweighted joins with $\tilde J$ is a categorical equivalence.
\end{proof}

Recall from \cite[\textsection9.18]{joyalnotes} that, given a diagram $d\colon J\to Q$ in $Q$, regarded as a map $d\colon\Delta[0]\to Q^J$,
there is a categorical equivalence
$$Q_{/d}\stackrel{\simeq}{\longrightarrow}Q_{\fatslice d}$$
between the slice of $d$ and the fat slice of $d$. This is also true in the weighted case.

\begin{prop}
\label{slicevsfatslice}
Given a diagram $d\colon J\to Q$ in $Q$ regarded as a map $d\colon\Delta[0]\to Q^J$,
there is an equivalence of quasi-categories
$$Q^p_{/d}\stackrel{\simeq}{\longrightarrow}Q^p_{\fatslice d}$$
between the weighted slice of $d$ and the weighted fat slice of $d$.
\end{prop}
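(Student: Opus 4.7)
The plan is to reduce the proposition to Joyal's unweighted equivalence $Q_{/e} \simeq Q_{\fatslice e}$ by unpacking the definition of the weighted slice construction.

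First, I would observe that by the formula of \cref{adjunctionweightedjoinslice}, the weighted slice decomposes as
\[
(-)^p_{/(-)} \colon {}^{J/}\sset \xrightarrow{(-)\circ p} {}^{\tilde J/}\sset \xrightarrow{(-)_{/(-)}} \sset,
\]
so its value on a diagram $d\colon J \to Q$ is literally the ordinary Joyal slice $Q_{/(d\circ p)}$ over the fattened diagram $d\circ p\colon \tilde J \to Q$. Indeed, using the pushout presentation $\Delta[n]\star^p J = (\Delta[n]\star \tilde J)\amalg^p_{\tilde J} J$ and the universal property of pushouts, an $n$-simplex of $Q^p_{/d}$ unfolds into a map $\Delta[n]\star \tilde J \to Q$ whose restriction to $\tilde J$ equals $d\circ p$, which is precisely an $n$-simplex of $Q_{/(d\circ p)}$. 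The same unpacking applied to the weighted fat slice yields $Q^p_{\fatslice d} \cong Q_{\fatslice (d\circ p)}$; this is also the content of \cref{fatslicevscomma}.

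Once both weighted slices are identified with ordinary slices over $d\circ p$, I would invoke Joyal's unweighted categorical equivalence $Q_{/e} \simeq Q_{\fatslice e}$ from \cite[\textsection9.18]{joyalnotes}, applied to $e = d\circ p\colon \tilde J \to Q$, to conclude. I would then verify that the equivalence thus produced agrees with the canonical comparison, namely the mate under the Quillen adjunctions of \cref{adjunctionweightedjoinslice,adjunctionweightedfatjoinslice} of the natural categorical equivalence $I \diamond^p J \to I \star^p J$ established in \cref{equivalencefatneatjoin}. This naturality statement reduces to the compatibility of the pushout presentations $-\star^p J = (-\star \tilde J)\amalg_{\tilde J} J$ and $-\diamond^p J = (-\diamond \tilde J)\amalg_{\tilde J} J$ with the respective adjunctions.

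I do not expect any substantive obstacle: the argument collapses to the unweighted comparison once the right adjoints are unpacked. The only bookkeeping step is to confirm that the identifications $Q^p_{/d} \cong Q_{/(d\circ p)}$ and $Q^p_{\fatslice d} \cong Q_{\fatslice (d\circ p)}$ intertwine the natural map $Q^p_{/d} \to Q^p_{\fatslice d}$ with Joyal's unweighted comparison, but this is immediate from the universal property of the defining pushouts.
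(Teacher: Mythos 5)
Your proof is correct, but it takes a genuinely different route from the paper's. The paper argues directly at the level of the Quillen adjunctions of \cref{adjunctionweightedjoinslice,adjunctionweightedfatjoinslice}: the natural categorical equivalence $I\diamond^pJ\to I\star^pJ$ of \cref{equivalencefatneatjoin} is a natural weak equivalence of left Quillen functors, so by \cite[Corollary 1.4.4(b)]{hovey} the induced map of right adjoints $Q^p_{/d}\to Q^p_{\fatslice d}$ is an equivalence on fibrant objects, i.e.\ whenever $Q$ is a quasi-category. You instead observe that, since both weighted right adjoints are defined as composites through $(-)\circ p\colon{}^{J/}\sset\to{}^{\tilde J/}\sset$, one has $Q^p_{/d}\cong Q_{/(d\circ p)}$ and $Q^p_{\fatslice d}\cong Q_{\fatslice (d\circ p)}$ on the nose, and then you quote Joyal's unweighted equivalence for the diagram $d\circ p\colon\tilde J\to Q$. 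Both arguments are sound, and your identification of the $n$-simplices via the pushout presentation of $\Delta[n]\star^pJ$ is valid. Your reduction has the virtue of making the proposition an immediate corollary of the unweighted statement, without needing \cref{equivalencefatneatjoin} to produce \emph{an} equivalence; its cost is the compatibility check you flag at the end, which is what guarantees that the equivalence so produced is the canonical comparison map. That check does go through: the canonical map $I\diamond^pJ\to I\star^pJ$ is obtained from $I\diamond\tilde J\to I\star\tilde J$ by pushout along $\tilde J\to J$, so its mate under the composite adjunctions is exactly the unweighted comparison for $d\circ p$ transported along your identifications. The paper's argument, by contrast, establishes directly that the canonical map is an equivalence, at the price of first proving the weighted join comparison via a homotopy pushout argument.
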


The proof is an enhancement of the argument from \cite[Proposition 2.4.13]{RV1}.

\begin{proof}
We consider the Quillen
adjunctions established in \cref{adjunctionweightedjoinslice,adjunctionweightedfatjoinslice}.
By \cref{equivalencefatneatjoin}, the canonical map
$I\diamond^pJ\stackrel{\simeq}{\longrightarrow}I\star^pJ$
between the fat join and the neat join is a weak equivalence. By \cite[Corollary 1.4.4(b)]{hovey}, this is equivalent to saying that the canonical map between the corresponding
right Quillen functors
$Q^p_{/d}\stackrel{\simeq}{\longrightarrow}Q^p_{\fatslice d}$
is a weak equivalence whenever $d\colon J\to Q$ is fibrant in ${}^{J/}\sset$, i.e., whenever $Q$ is a quasi-category.
\end{proof}

We conclude the subsection
by conjecturing alternative expressions for the quasi-category of weighted cones over a diagram $d\colon J\to Q$. We will see at the end of \cref{Weighted limits a quasi-category}
how these candidate descriptions
could be used to show that our definition of weighted limit of the diagram $d\colon J\to Q$ is compatible with the approach by Gepner-Haugseng-Nikolaus from \cite[\textsection2]{GHN}. However, the reader does not need to read this part to understand the rest of the paper, and should feel free to continue to \cref{weightedlimits}.

For any quasi-category $Q$, there is a functor
$$\Hom_Q(-,-)\colon Q^{\op}\times Q\to\mathfrak N[\kan],$$
which can be understood as the straightening of the canonical fibration
$$(\partial_0,\partial_1)\colon\Tw(J)\to J^{\op}\times J,$$
where $\Tw(J)$ denotes the \emph{twisted arrow quasi-category}. For details on these constructions, see e.g.~ in \cite[\textsection2]{BGN}.

We also recall that, given any weight $p\colon\tilde J\to J$, we can consider its straightening $St(p)\colon\mathfrak C[J]\to\sset$, as explained in \cref{straighteningunstraighteningqcats}(2), and then take its adjoint $st(p)\colon J\to\mathfrak N[\sset]$. Moreover, we can take its bifibrant replacement in the projective model structure, which is a functor $st(p)^{bifib}\colon J\to\mathfrak N[\kan]$. Inspired by what happens with ordinary categories, we expect the following to be true, which gives a third equivalent model for the quasi-category of weighted cones.

\begin{conjecture}
\label{comparisonGHN}
Let $Q$ be a quasi-category, $d\colon J\to Q$ a diagram and $p\colon\tilde J\to J$ a weight.
There is a Joyal equivalence
$$Q^p_{\fatslice _d}\simeq st(p)^{bifib}\downarrow_{\mathfrak N[\kan]^J}\Hom_{Q}(-,d),$$
where $st(p)^{bifib}\downarrow_{\mathfrak N[\kan]^J}\Hom_{Q}(-,d)$ denotes the comma quasi-category of the functors $st(p)^{bifib}\colon\Delta[0]\to\mathfrak N[\kan]^J$ and $\Hom_{Q}(-,d)\colon Q\to\mathfrak N[\kan]^J$.
\end{conjecture}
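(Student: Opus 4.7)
My plan is to combine \cref{fatslicevscomma} with the quasi-categorical straightening--unstraightening equivalence of \cref{straighteningunstraighteningqcats}(b) to reduce the comparison to a familiar pull--push identification between weights and left fibrations. By \cref{fatslicevscomma}, the left-hand side is isomorphic to the comma quasi-category $\Delta_Q \downarrow_{Q^{\tilde J}}(d\circ p)$, which sits over $Q$ via the domain projection; its fiber over a vertex $a \in Q$ is (up to equivalence) the mapping space $\Map_{Q^{\tilde J}}(\Delta_a,\, d\circ p)$ of cones in $Q$ from $a$ to the pulled-back diagram $d\circ p\colon\tilde J\to Q$. Likewise, the right-hand side sits over $Q$ via the codomain-of-the-hom projection, with fiber over $a$ the mapping space $\Map_{\mathfrak N[\kan]^J}(st(p)^{bifib},\,\Hom_Q(a,d-))$. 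I would therefore first match the two quasi-categories fiberwise.

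On each fiber, the identification I am after is the quasi-categorical analog of the classical adjunction expressing a limit of a pulled-back diagram as a weighted limit, namely
\[
\Map_{Q^{\tilde J}}(\Delta_a,\, d\circ p)\;\simeq\;\Map_{\mathfrak N[\kan]^J}(st(p)^{bifib},\,\Hom_Q(a,d-)).
\]
This should come from interpreting $p\colon\tilde J\to J$, via \cref{straighteningunstraighteningqcats}(b), as the pull--push adjunction $p_!\dashv p^*$ between the quasi-categories of $\kan$-valued functors on $J$ and on $\tilde J$: the functor $\Hom_Q(a,d-)$ on $J$ pulls back along $p$ to $\Hom_Q(a,d\circ p-)$ on $\tilde J$, whose limit is the space of cones on the left, while $st(p)^{bifib}$ is a projectively bifibrant model for $p_!$ of the terminal weight, so that mapping out of it over $J$ computes precisely the same limit. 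The bifibrant replacement in the projective model structure is what makes this pull--push comparison homotopically correct, and the direction of projective (co)fibrancy matches Heuts--Moerdijk's Quillen equivalence.

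The main obstacle will be promoting these pointwise equivalences into an honest Joyal equivalence of quasi-categories over $Q$, rather than merely a fiberwise equivalence of Kan complexes. I would construct a natural comparison map of quasi-categories over $Q$---for instance by exhibiting both sides as pullbacks in a single diagram involving the twisted-arrow quasi-category of $\mathfrak N[\kan]^J$ and the hom bifunctor $\Hom_Q(-,d-)\colon Q^{\op}\to\mathfrak N[\kan]^J$---and then verify that this comparison is a fiberwise equivalence between right fibrations over $Q$, which by the standard fiberwise criterion for equivalences of right fibrations upgrades to a Joyal equivalence. Formalizing the two-variable compatibilities between straightening, cotensors, hom bifunctors and comma objects is exactly the piece of quasi-categorical two-variable adjunction technology whose state in the literature the introduction flags as uncertain, and it is the reason this statement is only phrased as a conjecture here.
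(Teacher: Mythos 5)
The statement you are proving is stated in the paper only as a conjecture: the author gives no proof, and explicitly says that the missing ingredient is a working theory of two-variable adjunctions of quasi-categories and the associated compatibilities between straightening, cotensors and comma objects. So there is no argument in the paper to compare yours against, and your proposal has to stand on its own. Its overall shape is the natural one and matches the heuristics the paper itself supplies: reduce the left-hand side to $\Delta_Q\downarrow_{Q^{\tilde J}}(d\circ p)$ via \cref{fatslicevscomma}, interpret $st(p)^{bifib}$ as a model for $p_!$ of the terminal weight via \cref{straighteningunstraighteningqcats}(b), match fibers over $Q$ using the adjunction $p_!\dashv p^*$, and conclude by the fiberwise criterion for equivalences of right fibrations.

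The gap is that every load-bearing step is asserted rather than established, and the most important one is structurally missing. Concretely: (i) identifying the fiber of $\Delta_Q\downarrow_{Q^{\tilde J}}(d\circ p)$ over $a$ with $\Map_{\mathfrak N[\kan]^{\tilde J}}(\ast,\Hom_Q(a,d\circ p(-)))$ requires the end formula for mapping spaces in functor quasi-categories; (ii) identifying $st(p)^{bifib}$ with $p_!(\ast_{\tilde J})$ in a way compatible with the $p_!\dashv p^*$ adjunction on $\mathfrak N[\kan]$-valued functors requires the compatibility of straightening with left Kan extension along $p$ (note that $p$ is an arbitrary simplicial map here, not a left fibration); and (iii) the fiberwise criterion only upgrades a \emph{map} of right fibrations to an equivalence --- without an actual comparison map over $Q$, fiberwise equivalences of the fibers prove nothing. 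Your proposal says you ``would construct'' such a map from a diagram involving the twisted-arrow quasi-category and the bifunctor $\Hom_Q(-,d-)$, but never does so, and assembling the pointwise equivalences of (i) and (ii) into a single natural map over $Q$ is precisely the two-variable bookkeeping the paper identifies as the obstruction. As written, the proposal is a credible plan of attack, and you candidly say as much in your last sentence, but it does not close the conjecture.
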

\label{defcotensored}
In presence of a notion of two-variable adjunction of quasi-categories, a quasi-category $Q$ is said to be
\emph{tensored and cotensored} over $\mathfrak N[\kan]$ if there exist functors
$$-\otimes-\colon\mathfrak N[\kan]\times Q\to Q\text{ and }[-,-]\colon\mathfrak N[\kan]^{\op}\times Q\to Q$$
such that, together with $\hom_Q$ they form a two-variable adjunction of quasi-categories
$$(-\otimes-,[-,-],\Hom_{Q}(-,-))\colon \mathfrak N[\kan]\times Q\to Q.$$
This should be the case, for instance, when $Q=\mathfrak N[\cQ]$ is the homotopy coherent nerve of a $\kan$-category that is tensored and cotensored over $\kan$.
The following is likely to be a formal property of two-variable adjunctions of quasi-categories, and would allow to further expand the formula for $Q^p_{\fatslice _d}$.

\begin{conjecture}
\label{sliceforcotensored}
Let $Q$ be a
quasi-category that is tensored and cotensored over $\mathfrak N[\kan]$, $d\colon J\to Q$ a diagram and $p\colon\tilde J\to J$ a weight.
There is a Joyal equivalence
$$st(p)^{bifib}\downarrow_{\mathfrak N[\kan]^J}\Hom_{Q}(-,d)\simeq(st(p)^{bifib}\otimes-)\downarrow_{Q^J}d\simeq Q\downarrow_Q[[st(p)^{bifib},d]],$$
where $[[st(p)^{bifib},d]]$ is the object of $Q$ defined as
$$[[st(p)^{bifib},d]]:=\lim\big(\xymatrix@C=1.1cm{\Tw(J)\ar[r]^-{(\partial_0,\partial_1)}&J^{\op}\times J\ar[rr]^-{(st(d)^{bifib})^{\op}\times d}&&\mathfrak N[\kan]^{\op}\times Q\ar[r]^-{[-,-]}&Q}\big).$$
\end{conjecture}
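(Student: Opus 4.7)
My plan is to test both equivalences against the universal property of the comma construction: for functors $f\colon A\to C$ and $g\colon B\to C$ of quasi-categories, a map $X\to f\downarrow_C g$ corresponds, naturally in $X$, to a pair of functors $a\colon X\to A$, $b\colon X\to B$ together with a natural transformation $f\circ a\Rightarrow g\circ b$ in $\Fun(X,C)$, and this universal property characterizes $f\downarrow_C g$ up to Joyal equivalence. So it suffices to match the data of maps out of $X$ in each case, using the two-variable tensor--cotensor--hom adjunction to transpose the relevant $2$-cells.

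For the first equivalence, a map $X\to st(p)^{bifib}\downarrow_{\mathfrak N[\kan]^J}\Hom_{Q}(-,d)$ unwinds to a functor $a\colon X\to Q$ together with a natural transformation $\alpha\colon st(p)^{bifib}\Rightarrow\Hom_{Q}(a,d)$ in $\Fun(X,\mathfrak N[\kan]^J)$. Applying the $(-\otimes-,\Hom_Q(-,-))$ half of the two-variable adjunction parametrically in $j\in J$ and in $X$, I would transpose $\alpha$ into a natural transformation $st(p)^{bifib}\otimes a\Rightarrow d$ in $\Fun(X,Q^J)$; this is precisely the data of a map $X\to(st(p)^{bifib}\otimes-)\downarrow_{Q^J}d$. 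The transposition being a natural equivalence then promotes to a Joyal equivalence of the corresponding comma quasi-categories.

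For the second equivalence, I would first interpret the limit defining $[[st(p)^{bifib},d]]$ as a quasi-categorical end. The composite $\Tw(J)\to J^{\op}\times J\to\mathfrak N[\kan]^{\op}\times Q\to Q$ is the standard end-integrand, so taking its limit realizes $\int_{j\in J}[st(p)^{bifib}(j),d(j)]$. Combining this with the cotensor half of the two-variable adjunction pointwise in $j$, together with the fact that $\Map_Q(a,-)$ commutes with limits and with the end formula for natural transformations, one obtains a natural equivalence
\[
\Map_Q(a,[[st(p)^{bifib},d]])\simeq\Map_{Q^J}(st(p)^{bifib}\otimes a,d).
\]
Hence a map $X\to Q\downarrow_Q[[st(p)^{bifib},d]]$, which by the universal property is the data of $a\colon X\to Q$ together with an arrow from $a$ to the constant $[[st(p)^{bifib},d]]$ in $\Fun(X,Q)$, corresponds to a natural transformation $st(p)^{bifib}\otimes a\Rightarrow d$ in $\Fun(X,Q^J)$; this is exactly the data classifying a map into $(st(p)^{bifib}\otimes-)\downarrow_{Q^J}d$, producing the second Joyal equivalence.

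The main obstacle is that the quasi-categorical theory of two-variable adjunctions and its compatibility with cotensors, comma objects, and the twisted-arrow description of ends is only partially available in the literature. In particular, the parametric transposition of natural transformations across $(-\otimes-,[-,-],\Hom_{Q}(-,-))$ and the identification of $\lim$ along $\Tw(J)\to J^{\op}\times J$ with the $\mathfrak N[\kan]$-enriched end pairing both rest on coherence results for two-variable adjunctions of quasi-categories that would need to be established rigorously; it is precisely this gap that prevents promoting the statement from a conjecture to a theorem at this stage.
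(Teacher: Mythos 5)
The statement you are addressing is stated in the paper as a \emph{conjecture}, and the paper deliberately offers no proof of it: the author writes that it ``is likely to be a formal property of two-variable adjunctions of quasi-categories'' and keeps the comparison with Gepner--Haugseng--Nikolaus conditional precisely because the quasi-categorical analogs of the relevant $1$-categorical facts are not known to be available. Your outline reproduces exactly the intended heuristic: for the first equivalence, transpose the $2$-cell $st(p)^{bifib}\Rightarrow\Hom_Q(a,d)$ across the tensor--hom half of the two-variable adjunction to obtain $st(p)^{bifib}\otimes a\Rightarrow d$; for the second, identify $[[st(p)^{bifib},d]]$ with the end $\int_{j}[st(p)^{bifib}(j),d(j)]$ and use that $\Map_Q(a,-)$ carries this limit to the mapping space of natural transformations. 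At the level of objects and expected universal properties this is the right shape of argument, and it is the same reasoning that proves the $1$-categorical statement.

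However, it is not a proof, and you correctly identify why. Every step that matters---the parametric (in $X$ and in $j\in J$) transposition of natural transformations across a two-variable adjunction of quasi-categories, the identification of the limit over $\Tw(J)$ with an enriched end enjoying the Fubini and representability properties you invoke, and the promotion of an objectwise equivalence of mapping spaces to a Joyal equivalence of comma quasi-categories---requires coherence results that must be established rather than cited. Even the starting point, that a map $X\to f\downarrow_C g$ corresponds naturally in $X$ to a pair of functors together with a natural transformation, needs care here: the comma is defined as a strict pullback of simplicial sets, and its $2$-dimensional universal property and functoriality have to be derived; likewise $\Hom_Q(-,d)\colon Q\to\mathfrak N[\kan]^J$ is only constructed via straightening of the twisted arrow fibration, and its compatibility with the tensor--cotensor--hom adjunction is part of the missing infrastructure. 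Since the paper leaves the statement as a conjecture for exactly these reasons, your proposal should be read as an articulation of why the conjecture is plausible rather than as a proof that could replace it; closing the gap would require either developing the two-variable adjunction calculus for quasi-categories (e.g.\ in the style of the Riehl--Verity $\infty$-cosmos framework) or reducing the statement to results already proved in the literature on presentable quasi-categories.
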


\section{Weighted limits}
\label{Weighted limits}

\label{weightedlimits}
Aiming to understand homotopy weighted limits in $(\infty,1)$-category theory, we start by describing in \cref{Weighted limits in unenriched category theory} weghted limits in unenriched category theory, namely in ordinary categories. Given a category $\cC$, the limit of a diagram $D\colon\cJ\to\cC$ weighted by a functor $W\colon\cJ\to\set$ enjoys two equivalent universal properties: one in terms of homsets and one in terms of its category of cones.

We then address the question of what should be the weighted limit of a diagram in an $(\infty,1)$-category $\cQ$, as opposed to an ordinary $1$-category $\cC$.
By taking this step, we are adding two levels of complexity, as we are introducing a higher categorical structure, and we want to work with derived or homotopically meaningful constructions.

To this end, we expect to replace isomorphisms by equivalences, homsets by derived mapping spaces, categories of cones by $(\infty,1)$-categories of cones, terminal objects in a category to terminal objects in an $(\infty,1)$-category, weights valued in sets with weights valued in spaces (namely Kan complexes).
Depending on the employed model of $(\infty,1)$-category, one of the two universal properties discussed in the unenriched case generalizes more naturally.

When working with an $(\infty,1)$-category in the form of a $\kan$-category $\cQ$, (strict) limits for diagrams $D\colon\cJ\to\cQ$ weighted by diagrams $W\colon\cJ\to\kan$ are just a special instance of the theory of weighted limits in enriched category theory. We revise this in \cref{Weighted limits in a Kan-category}. Then, in \cref{Homotopy weighted limits in a Kan-enriched category} we take the homotopical point of view and discuss homotopy weighted limits in the $\kan$-category $\cQ$ with respect to the homotopical structure determined by the $\kan$-enrichment. We explain how this approach, which requires minimal assumptions on $\cQ$, compares to others existing in the literature, for which $\cQ$ is assumed to be either the category of bifibrant objects of a simplicial model structure or the $(\infty,1)$-core of an $\infty$-cosmos.

When working with an $(\infty,1)$-category in the form of an arbitrary quasi-category $Q$,
we take over from the definition of ordinary (namely unweighted) limits for diagrams $d\colon J\to Q$, which is introduced by Joyal \cite{joyalnotes} as the terminal object in the quasi-category of cones. In \cref{Weighted limits a quasi-category}, we generalize suitably these ideas for any given weight in the form of a simplicial map $p\colon\tilde J\to J$, and define weighted limits in a quasi-category using the constructions of the quasi-category of weighted cones from \cref{The (quasi-)category of weighted cones}. We show in \cref{weightedlimitcotensored} that our definition of weighted limit agrees with the approach by Gepner-Haugseng-Nikolaus from \cite[\textsection2]{GHN} when $Q$ is tensored and cotensored over the quasi-category of spaces.

Finally, in \cref{Comparison of weighted limits in different models} we compare the two approaches (for $\kan$-categories and quasi-categories) and show that they agree, validating our constructions.

\subsection{Weighted limits in unenriched category theory}
\label{Weighted limits in unenriched category theory}

Let $\cC$ be an ordinary category.
Given a diagram $D\colon\cJ\to\cC$ in $\cC$ and a weight $W\colon\cJ\to\set$, a weighted cone over $D$ consists of an object $A$ and of a natural transformation
 $$\Psi\colon W\to\Map_{\cC}(A ,D-)$$
 and we constructed in \cref{weightedconescat} the category $\cC^{el_{\cJ}(W)}_{/D}$ of weighted cones over the diagram $D$, with respect to the weight $el_{\cJ}\colon\cJ\to\set$.
 
 We now give two (equivalent) universal properties for weighted limits in the unenriched context. One naturally generalizes to the enriched context (see \cref{Weighted limits in a Kan-category,Homotopy weighted limits in a Kan-enriched category}), and the other one to the quasi-categorical context (see \cref{Weighted limits a quasi-category}).

\begin{defn}
\label{weightedlimitsuniversalpropertydiscrete}
The \emph{limit} of a diagram $D\colon \cJ\to\cC$ \emph{weighted} by a weight $W\colon \cJ\to\set$ consists of an object $L$ of $\cC$ together with
a weighted cone
$$\Lambda\colon W\to\Map_{\cC}(L,D)$$
such that one of the two following equivalent conditions holds.
\begin{enumerate}
\item[(a)] The cone $\Lambda$ induces a natural bijection
$$\Hom_{\cC}(A,L)\cong\Hom_{\Fun(\cJ,\set)}(W,\Hom_{\cC}(A,D-)).$$
\item[(b)] The cone $\Lambda$ is terminal in the category $\cC^{el(W)}_{/D}$ of weighted cones over the diagram $D$.
\end{enumerate}
\end{defn}

By the Yoneda lemma the limit of $D$ weighted by $W$ is determined up to isomorphism.

\begin{rmk}
The weighted limit exists if and only if there exists an object $\lim{}^WD$ for which a natural bijection like the one in (b) holds. In that case, the universal cone can be retrieved as image of the identity. So it makes sense to just talk about the weighted limit, rather than the full cone. When the weighted limit exists, we write $L\cong\lim{}^WD$.
\end{rmk}

\begin{rmk}
When the weight is the constant functor $\Delta[0]_{\cC}\colon\cJ\to\set$, the weighted limit of a diagram $D\colon\cJ\to\cC$ is the ordinary limit, i.e.,
$$\lim{}^{\Delta[0]_{\cJ}}D\cong\lim D.$$
\end{rmk}

\begin{rmk}
\label{existencestrictweightedlimits}
If $\cC$ is a complete category, given any weight $W\colon \cJ\to\set$ and any diagram $D\colon \cJ\to\cC$, the limit of $D$ weighted by $W$ and is given by the formula
$$\lim{}^WD\cong\int_{j\in\cJ}\prod_{Wj}Dj$$
and also by the formula
$$\lim{}^WD\cong\lim\big(\xymatrix@C=1.3cm{\Tw{\cJ}\ar[r]^-{(\partial_0,\partial_1)}&\cJ^{\op}\times\cJ\ar[r]^-{W^{\op}\times D}&\set\times\cQ\ar[r]^-{-\otimes-}&\cQ}\big),$$
where $\Tw\cJ$ denotes the twisted arrow category of $\cJ$; cf.~\cite[Remark 1.13]{loregian}.
The two equivalent constructions define a functor
$$\lim{}^{W}\colon\Fun(\cJ,\cC)\to\cC.$$
\end{rmk}

\begin{ex}
Let $\Delta[0]_{\Gamma}\colon\Gamma\to\set$ be the constant weight
$$\xymatrix{\{0\}\ar[r]&\{0\}&\ar[l]\{0\}}.$$
Given any diagram $D\colon\Gamma\to\cC$ in a category $\cC$ with image
$$\xymatrix{A\ar[r]^-{F}&B&\ar[l]_-{G}C},$$
the weighted limit cone is the pullback $A\times_B C$ of $F$ and $G$, together with the universal cone as depicted
$$\xymatrix@R=1cm{&A\times_B C\ar[ld]_-{pr_1}\ar[d]|(.6){F\circ pr_1=G\circ pr_2}\ar[rd]^-{pr_2}&\\
A\ar[r]_-{F}&B&\ar[l]^-{G}C.}$$
If instead $W\colon\Gamma\to\set$ is the weight
$$\xymatrix{\{0\}\ar@{^{(}->}[r]&\{0,1\}&\ar@{_{(}->}[l]\{1\},}$$
the weighted limit cone is the product $A\times C$ together with the universal cone as depicted
$$\xymatrix{&A\times C\ar[ld]_-{pr_1}\ar@/^/[d]^(.7){G\circ pr_2}\ar@/_/[d]_(.7){F\circ pr_1}\ar[rd]^-{pr_2}&\\
A\ar[r]_-{F}&B&\ar[l]^-{G}C.}$$
\end{ex}

In unenriched category theory, all limits weighted by $W$ can be realized as ordinary limits over the category of elements of $W$.

\begin{prop}[{\cite[Example 7.1.5]{RiehlCHT}}]
\label{weightedlimitsdiscretearelimits}
Let $W\colon \cJ\to\set$ be a weight. The weighted limit of a diagram $D\colon\cJ\to\cC$ can be realized as
$$\lim{}^WD\cong\lim\left(el_{\cJ}(W)\to \cJ\stackrel{D}{\longrightarrow}\cC\right).$$
\end{prop}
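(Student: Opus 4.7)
The plan is to verify the universal property $(a)$ from \cref{weightedlimitsuniversalpropertydiscrete}, taking $L := \lim\left(el_{\cJ}(W)\stackrel{P}{\to}\cJ\stackrel{D}{\to}\cC\right)$, where $P$ denotes the canonical projection from the category of elements. By the ordinary universal property of the limit on the right-hand side, we have a natural bijection
$$\Hom_{\cC}(A,L)\cong\Hom_{\Fun(el_{\cJ}(W),\cC)}(\Delta_A,D\circ P),$$
so it suffices to produce a natural bijection
$$\Hom_{\Fun(el_{\cJ}(W),\cC)}(\Delta_A,D\circ P)\cong\Hom_{\Fun(\cJ,\set)}(W,\Hom_{\cC}(A,D-)).$$

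To construct this bijection, I would unpack both sides. An element of the left-hand side is a family of arrows $\lambda_{(j,x)}\colon A\to Dj$, indexed by objects $(j,x\in W(j))$ of $el_{\cJ}(W)$, such that for every $f\colon j\to j'$ in $\cJ$ and every $x\in W(j)$ the morphism $(j,x)\to(j',W(f)(x))$ in $el_{\cJ}(W)$ forces $Df\circ\lambda_{(j,x)}=\lambda_{(j',W(f)(x))}$. An element of the right-hand side is a family of functions $\Psi_j\colon W(j)\to\Hom_{\cC}(A,Dj)$, natural in $j\in\cJ$. The assignment $\Psi_j(x):=\lambda_{(j,x)}$ (and its evident inverse) sets up a bijection, and the naturality condition on $\lambda$ with respect to the generating morphisms of $el_{\cJ}(W)$ matches exactly with naturality of $\Psi$ in $j$. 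Naturality of the bijection in $A$ is automatic, as both sides are manifestly contravariantly functorial via precomposition.

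Composing the two bijections yields
$$\Hom_{\cC}(A,L)\cong\Hom_{\Fun(\cJ,\set)}(W,\Hom_{\cC}(A,D-)),$$
natural in $A$, which is exactly condition $(a)$ identifying $L$ as $\lim{}^W D$. By the Yoneda lemma, this determines $L$ up to a unique isomorphism, yielding the asserted formula.

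There is no real obstacle here: the argument amounts to unpacking the definition of the category of elements together with the standard end/limit calculus. The only mild bookkeeping concerns checking that naturality with respect to the (non-identity) morphisms of $el_{\cJ}(W)$ recovers exactly the naturality squares for $\Psi$ with respect to morphisms in $\cJ$, which follows because every morphism of $el_{\cJ}(W)$ is of the form $(j,x)\to(j',W(f)(x))$ for some $f\colon j\to j'$ in $\cJ$.
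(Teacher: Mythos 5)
Your proof is correct; the paper itself does not supply an argument for this proposition but defers to the cited reference, and your identification of cones over $D\circ P$ with natural transformations $W\to\Hom_{\cC}(A,D-)$ via $\Psi_j(x)=\lambda_{(j,x)}$ is exactly the standard argument given there. The verification that naturality over the morphisms of $el_{\cJ}(W)$ (all of which lie over morphisms of $\cJ$) matches the naturality squares for $\Psi$ is the whole content, and you have handled it correctly.
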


We will show as \cref{allweightedhomototopylimitsareconstantenriched,allweightedhomototopylimitsareconical}
analogs of this fact for weighted limits in a $\kan$-category (with additional structure) and in a quasi-category, respectively.


\subsection{Weighted limits in a simplicial category}
\label{Weighted limits in a Kan-category}

Let $\cQ$ be a $\sset$-enriched category, or for short a simplicial category. This means that $\cQ$ consists of a set of objects $\Ob(\cQ)$, together with suitably compatible mapping simplicial sets $\Map_{\cQ}(A,B)$ for any objects $A$ and $B$. In particular, the mapping object defines a functor
$$\Map_{\cQ}(-,-)\colon\cQ^{\op}\times\cQ\to\sset.$$
Given two simplicial categories\footnote{To avoid size issues, all simplicial categories are assumed to be small unless otherwise specified.} $\cQ$ and $\cQ'$, simplicial functors between them, namely functors that are compatible with the simplicial enrichment, assemble into a category $s\Fun(\cQ,\cQ')$ whose morphisms are given by simplicial natural transformations. We refer the reader to \cite[\textsection 2.2]{kelly} for a more detailed construction on this account.

Given any simplicial category $\cJ$, a diagram $D\colon\cJ\to\cQ$ in $\cQ$ and a weight $W\colon\cJ\to\sset$ (in the form of simplicial functors), a \emph{weighted cone} over $D$ consists of an object $A$ and of a simplicial natural transformation
 $$\Psi\colon W\to\Map_{\cQ}(A ,D-).$$
The weighted limit of $D$ is then defined as the universal cone for the given weight $W$.
 
\begin{defn}
\label{weightedlimitsuniversalproperty}
The \emph{limit} of a diagram $D\colon \cJ\to\cQ$ \emph{weighted} by a weight $W\colon \cJ\to\sset$
consists of an object $L$ of $\cQ$ together with
a weighted cone
$$\Lambda\colon W\to\Map_{\cQ}(L,D)$$
such that the cone $\Lambda$ induces a natural bijection
$$\Map_{\cQ}(A,L)\cong\Map_{s\Fun(\cJ,\kan)}(W,\Map_{\cQ}(A,D-)).$$
\end{defn}

By the Yoneda lemma the limit of $D$ weighted by $W$ is determined up to isomorphism.

\begin{rmk}
The weighted limit exists if and only if there exists an object $\lim{}^WD$ for which a natural isomorphism like the above holds. In that case, the universal cone can be retrieved as image of the identity. So it makes sense to just talk about the weighted limit, rather than the full cone. We write $L\simeq\lim{}^WD$.
\end{rmk}

\begin{rmk}
When the weight is the constant functor $\Delta[0]_{\cC}\colon\cJ\to\sset$, the weighted limit of a diagram $D\colon\cJ\to\cC$ is sometimes called \emph{conical} or \emph{enriched} limit, given that weighted cones coincide with ordinary cones. We write
$$\lim{}^{\Delta[0]_{\cJ}}D=:\lim D.$$
\end{rmk}

\begin{rmk}
\label{endformula}
If $\cQ$ is complete as a $1$-category as well as cotensored over $\sset$, it is shown in \cite[\textsection 3.10]{kelly} that given any weight $W\colon \cJ\to\sset$ and any diagram $D\colon \cJ\to\cQ$, the limit of $D$ weighted by $W$ exists, and is given by the enriched end$$\lim{}^WD\cong\int_{j\in\cJ}Dj^{Wj}.$$
Moreover, the resulting construction defines a bifunctor
$$\lim{}^{-}(-)\colon\Fun(\cJ,\sset)^{\op}\times\Fun(\cJ,\cQ)\to\cQ.$$
\end{rmk}

\begin{ex}
\label{comma}
Define the \emph{comma weight} as the functor $W\colon\Gamma\to\sset$ whose image is
$$\xymatrix{\{0\}\ar@{^{(}->}[r]&\Delta[1]&\ar@{_{(}->}[l]\{1\}}.$$
Given any diagram $D\colon\Gamma\to\cQ$ in a simplicial category $\cQ$ with image
$$\xymatrix{A\ar[r]^-{F}&B&\ar[l]_-{G}C},$$
the weighted limit cone of $D$ is called the \emph{comma construction} $f\downarrow_{B} g$.
This weighted limit appears e.g.~ in \cite[\textsection3.3]{RV2} when $\cQ$ is the simplicial category of quasi-categories (or in fact any $\infty$-cosmos), and in \cite[\textsection4]{kelly2limits} when $\cQ$ is the $2$-category of categories.
Under suitable completeness assumptions on $\cQ$ we can specialize and unravel the end formula for $\lim{}^WD$ from \cref{existencestrictweightedlimits},
and obtain the expression
$$F\downarrow_{B} G\cong\eq\left(A\times B^{\Delta[1]}\times C\rightrightarrows B\times B\right)\cong (A\times C)\times_{B\times B}B^{\Delta[1]},$$
namely the comma is the pullback
$$\xymatrix{F\downarrow_{B}G\ar[d]_-{(p_0,p_1)}\ar[r]&B^{\Delta[1]}\ar@{->>}[d]^-{(p_0,p_1)}\\
A\times C\ar[r]_{F\times G}&B\times B.}$$
It comes with the universal cone
$$\xymatrix@R=2cm{&F\downarrow_{B} G\ar@{}[d]|-{\Rightarrow}\ar[ld]_-{p_0}\ar@/^/[d]^(.7){G\circ p_1}\ar@/_/[d]_(.7){F\circ p_0}\ar[rd]^-{p_1}&\\
A\ar[r]_-{F}&B&\ar[l]^-{G}C.}$$
This cone is usually depicted as
$$\xymatrix@R=.5cm{F\downarrow_{B} G\ar@{}[rd]|-{\Rightarrow}\ar[r]^-{p_1}\ar[d]_-{p_0}&C\ar[d]^-G\\
A\ar[r]_-F&B.}$$
\end{ex}

Comparing with \cref{weightedlimitsdiscretearelimits}, it is not true that any weighted limit of a diagram $D\colon\cJ\to\cQ$ can be realized as the ordinary limit of a diagram obtained by precomposing $D$ with a suitable functor $\widetilde{\cJ}\to\cJ$. We give a counterexample for $\cJ=[0]$ and general $\cQ$.

\begin{rmk}
Given any object $D$ of $\cQ$, seen as diagram $D\colon[0]\to\cQ$, its limit weghted by $\Delta[1]\colon[0]\to\sset$ is by definition the cotensor
$$\lim{}^{\Delta[1]}D\cong D^{\Delta[1]}.$$
However, for any category $\tilde{\cJ}$ we have that
$$\lim\left(\tilde{\cJ}\to[0]\stackrel{D}{\longrightarrow}\cQ\right)\cong D,$$
and in general the cotensor $D^{\Delta[1]}$ is different from $D$.
\end{rmk}

We will see in \cref{allweightedhomototopylimitsareconstantenriched} that the property holds if we move to a homotopical context and consider homotopically meaningful weighted limits, as we discuss in the coming section.

\subsection{Homotopy weighted limits in a $\kan$-enriched category}
\label{Homotopy weighted limits in a Kan-enriched category}

From now on, we want to think that the simplicial category $\cQ$ is an $(\infty,1)$-category, and for this we specialize to the case of $\cQ$ being in fact a $\kan$-category. This means that 
any $\Map_{\cQ}(A,B)$ is now a Kan complex, and the mapping object defines a functor
$$\Map_{\cQ}(-,-)\times\cQ^{\op}\times\cQ\to\kan.$$

While the strict notions of weighted limits in $\cQ$ from the previous section are well-defined, they are not homotopically meaningful. To make the notion of weighted limit homotopically meaningful, one way is to modify the defining universal property in a way that all occurring mapping simplicial sets are derived, and require a condition of the form
$$\Map^h_{\cQ}(A,\lim{}^WD )\simeq\Map^h_{\Fun(\cJ,\sset)}(W,\Map^h_{\cQ}(A,D-)).$$

To this end, we first make $\cQ$ is also a homotopical category, when endowed with the following weak equivalences.

\begin{defn}
\label{weakequivalences}
A map $f\colon X\to Y$ in $\cQ$ is a \emph{weak equivalence} if for any object $Z$
the induced maps
$$f^*\colon\Map_{\cQ}(Y,Z)\to\Map_{\cQ}(X,Z)\text{ and }f_*\colon\Map_{\cQ}(Z,X)\to\Map_{\cQ}(Z,Y)$$
are weak equivalences of Kan complexes.
\end{defn}

The following is a straightforward consequence of the definition of a homotopical category.

\begin{prop}
The category $\cQ$ with the weak equivalences above is a homotopical category, i.e., it satisfies the two-out-of-six property.
\end{prop}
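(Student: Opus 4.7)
The plan is to reduce the two-out-of-six property for weak equivalences in $\cQ$ to the two-out-of-six property for weak equivalences of Kan complexes, which is a standard fact (Kan complexes with their usual weak equivalences form a homotopical category, as they appear as the weak equivalences of the Kan--Quillen model structure).

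Concretely, suppose we are given three composable maps
\[
\xymatrix{X\ar[r]^-{f}&Y\ar[r]^-{g}&Z\ar[r]^-{h}&W}
\]
in $\cQ$ such that $gf$ and $hg$ are weak equivalences in the sense of \cref{weakequivalences}. I want to show that $f$, $g$, $h$, and $hgf$ are weak equivalences. First I would fix an arbitrary object $A$ of $\cQ$ and apply the covariant functor $\Map_{\cQ}(A,-)\colon\cQ\to\kan$. By functoriality, $(gf)_*=g_*\circ f_*$ and $(hg)_*=h_*\circ g_*$, and by hypothesis these composites are weak equivalences of Kan complexes. The two-out-of-six property for Kan complexes then implies that $f_*$, $g_*$, $h_*$, and $h_*g_*f_*=(hgf)_*$ are all weak equivalences of Kan complexes.

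Next I would run the symmetric argument with the contravariant functor $\Map_{\cQ}(-,A)\colon\cQ^{\op}\to\kan$: the composites $f^*\circ g^*=(gf)^*$ and $g^*\circ h^*=(hg)^*$ are weak equivalences of Kan complexes, so by two-out-of-six in $\kan$ each of $f^*$, $g^*$, $h^*$, and $(hgf)^*$ is a weak equivalence. Since $A$ was arbitrary, by the defining condition in \cref{weakequivalences} each of $f$, $g$, $h$, and $hgf$ is a weak equivalence in $\cQ$, as required.

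There is no real obstacle here: the argument is essentially formal, relying only on the fact that the representables $\Map_{\cQ}(A,-)$ and $\Map_{\cQ}(-,A)$ are functorial and that the defining condition for a map to be a weak equivalence in $\cQ$ is exactly that every such representable sends it to a weak equivalence of Kan complexes. The only external input is that weak equivalences of Kan complexes satisfy two-out-of-six, which is standard. The same argument also shows closure under composition and the two-out-of-three property, although these are already implied by two-out-of-six together with the fact that identities are weak equivalences.
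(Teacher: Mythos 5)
Your proof is correct and is exactly the ``straightforward consequence'' the paper alludes to without writing out: the paper offers no proof of this proposition, and your reduction of two-out-of-six in $\cQ$ to two-out-of-six for weak equivalences of Kan complexes, via the covariant and contravariant representables $\Map_{\cQ}(A,-)$ and $\Map_{\cQ}(-,A)$, is the intended formal argument.
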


\begin{digression}
The structure required on $\cQ$, namely the enrichment and the compatible homotopical structure, is a reminiscence of certain structures considered in the literature to present an $(\infty,1)$-category.
More precisely, one could take as $\cQ$:
\begin{enumerate}[leftmargin=*]
	\item the category $\cM_{fib}$ of fibrant objects in a simplicial model category $\cM$ with all objects cofibrant, with the given enrichement and the weak equivalences inherited from the model structure; or more generally
	\item the $(\infty,1)$-core $core_*\cK$ of an $\infty$-cosmos $\cK$, with the given enrichement and the weak equivalences inherited from the weak equivalences of $\cK$.
\end{enumerate}

An $\infty$-cosmos\footnote{We assume the definition of $\infty$-cosmos that Riehl-Verity use in \cite{RV7}, which is stronger than the one in firstly introduced in \cite{RV2}. In particular, every object is cofibrant.}  $\cK$ consists of a (possibly large) category enriched over $\qcat$ with a specified class of maps called ``isofibrations'' satisfying some axioms. For instance, the class of fibrant object in a model structure enriched over the Joyal model structure with all objects cofibrant forms an $\infty$-cosmos, in which the isofibrations are precisely the fibrations between fibrant objects.
In an $\infty$-cosmos, the quasi-categorical enrichment and the class of isofibrations determine a notion of weak equivalence, given by the maps $f\colon X\to Y$ in $\cK$ that induce Joyal equivalences on hom-quasi-categories
$$f^*\colon\Map_{\cK}(Y,Z)\to\Map_{\cK}(X,Z)\text{ and }f_*\colon\Map_{\cK}(Z,X)\to\Map_{\cK}(Z,Y)$$
for any object $Z$ in $\cK$. As mentioned in \cite[Remark 2.1.3]{RV7}, the $\infty$-cosmos underlying a model structure enriched over the Joyal model structure with all objects cofibrant, these coincide with the weak equivalences between fibrant objects in the model structure.

Given an $\infty$-cosmos $\cK$, the $(\infty,1)$-core of $\cK$ is a $\kan$-category $core_*\cK$ with the same set of objects, and hom Kan complexes given by
$$\Map_{core_*\cK}(X,Y):=core(\Map_{\cK}(X,Y)),$$
where $core$ denotes the maximal Kan complex of a quasi-category.
Given a simplicial model structure (rather than one just enriched over the Joyal model structure) with all objects cofibrant, the $(\infty,1)$-core of its underlying $\infty$-cosmos coincides with the $\infty$-cosmos itself, i.e., it is given by its category of fibrant objects with the given enrichment, which happens to be over $\kan$. In particular, (1) is a special case of (2). It will be shown in \cite[\textsection16]{RVbook} that a map $f\colon X\to Y$ in the $(\infty,1)$-core $core_*\cK$ of an $\infty$-cosmos $\cK$ is a weak equivalence of $\cK$ if and only if it induces weak equivalences of Kan complexes
$$f^*\colon\Map_{core_*\cK}(Y,Z)\to\Map_{core_*\cK}(X,Z)\text{ and }f_*\colon\Map_{core_*\cK}(Z,X)\to\Map_{core_*\cK}(Z,Y)$$
for any object $Z$. In particular, the homotopical structure induced by $\cK$ to $core_*\cK$ is detected by the $\kan$-enrichment of the core, as requested by \cref{weakequivalences}.
\end{digression}

The leading example for $\cQ$ is the category $\kan$ of Kan complexes with the usual enrichment. Other interesting examples, such as the $\kan$-category of quasi-categories or the $\kan$-category of complete Segal spaces, can be realized as the $(\infty,1)$-cores of the corresponding $\infty$-cosmoi.

Thanks to the structure of a homotopical category on $\cQ$, it makes sense to construct its homotopy category $\mathcal Ho(\cQ)$, say when two objects are equivalent, and define left and right derived functors whose source or target involves $\cQ$. Indeed, we recall that it makes sense to define the derived functor of any functor between homotopical categories, as its best approximation to a homotopical functor. We refer the reader to \cite[Chapter 2]{RiehlCHT} for a survey on the general theory of derived functors via deformations, first due to Shulman \cite{shulman}.

Aiming to make sense of the three derived mapping spaces
$$\Map^h_{\cQ}(A,\lim{}^WD ),\ \Map^h_{\cQ}(A,D-)\ \text{ and }\ \Map^h_{s\Fun(\cJ,\sset)}(W,\Map^h_{\cQ}(A,D-)),$$
we now proceed to constructing the functors
$$\Map^h_{\cQ}(-,-)\colon\cQ^{\op}\times\cQ\to\kan\quad\text{ and}$$
$$\Map^h_{s\Fun(\cJ,\sset)}(-,-)\colon(s\Fun(\cJ,\sset))^{\op}\times s\Fun(\cJ,\sset)\to\kan,$$
as derived functor of the mapping space functors of $\cQ$ and $s\Fun(\cJ,\sset)$.

The following proposition is an easy consequence of the definition of weak equivalences in $\cQ$, and in particular says that the mapping spaces in $\cQ$ have already the correct homotopy types.

\begin{prop}
The functor $\Map_{\cQ}(-,-)\colon\cQ^{\op}\times\cQ\to\kan$
is homotopical.
\end{prop}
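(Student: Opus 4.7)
The plan is to unpack both sides of the statement directly from the definitions. Recall that a functor between homotopical categories is \emph{homotopical} when it sends weak equivalences to weak equivalences. The weak equivalences in $\cQ^{\op}\times\cQ$ are, by construction, pairs $(f,g)$ where $f\colon A'\to A$ and $g\colon B\to B'$ are weak equivalences in $\cQ$ in the sense of \cref{weakequivalences}, while the weak equivalences in $\kan$ are the usual weak homotopy equivalences of Kan complexes.

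First I would fix such a pair $(f,g)$ and factor the induced map
$$\Map_{\cQ}(A,B)\to\Map_{\cQ}(A',B')$$
as the composite
$$\Map_{\cQ}(A,B)\xrightarrow{g_*}\Map_{\cQ}(A,B')\xrightarrow{f^*}\Map_{\cQ}(A',B').$$
The definition of weak equivalence in $\cQ$ says precisely that post-composition with $g$ induces a weak equivalence of Kan complexes $g_*$ for every choice of source, and that pre-composition with $f$ induces a weak equivalence of Kan complexes $f^*$ for every choice of target. Thus each of the two maps in the factorization is a weak equivalence of Kan complexes, and their composite is too, since weak equivalences of Kan complexes are closed under composition.

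There is essentially no obstacle here: the statement is a tautology, in the sense that \cref{weakequivalences} is tailor-made to make $\Map_{\cQ}(-,-)$ homotopical. The only thing worth remarking is that the proof in fact shows the stronger statement that $\Map_{\cQ}(-,-)$ is separately homotopical in each variable, which is what is really being used when one factors the two-variable weak equivalence through an intermediate term.
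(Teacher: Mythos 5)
Your argument is correct and is exactly the one the paper has in mind: the paper omits the proof, noting only that the proposition is an easy consequence of \cref{weakequivalences}, and your factorization of the induced map through $\Map_{\cQ}(A,B')$ using $g_*$ and then $f^*$ is the standard way to make that explicit.
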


It follows that the derived mapping space in $\cQ$ can be realized as the strict mapping space
$$\Map_{\cQ}^h(A,B)\simeq\Map_{\cQ}(A,B).$$

To make sense of the derived mapping spaces in $\Fun(\cJ,\sset)$, we need to endow also the category $s\Fun(\cJ,\sset)$ with a homotopical structure. For this, we evoke an instance of \cite[Theorem 5.4]{moser} and consider in fact a model structure.
When unspecified, we assume the category $\sset$ to be endowed with the Kan-Quillen model structure.

\begin{prop}
\label{projectivemodelstructure}
For any small simplicial category $\cJ$, the projective model structure exists on the category $s\Fun(\cJ,\sset)$ of $\cJ$-shaped diagrams in $\sset$ and makes it into a simplicial model category.
\end{prop}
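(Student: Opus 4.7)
The plan is to invoke the general machinery producing projective model structures on categories of enriched diagrams. Specifically, since $\sset$ endowed with the Kan-Quillen model structure is a cofibrantly generated simplicial model category (indeed, it is the prototypical example), one expects the projective model structure on $s\Fun(\cJ,\sset)$ to be obtained by transfer along the family of evaluation functors $\mathrm{ev}_j\colon s\Fun(\cJ,\sset)\to\sset$ indexed by the objects $j\in\Ob(\cJ)$, with weak equivalences and fibrations detected pointwise.

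First I would record the precise statement of Moser's Theorem 5.4 and identify the hypotheses: one needs $\sset$ to be a cofibrantly generated simplicial model category and $\cJ$ to be a small simplicial category, both of which hold in our setting by the standing assumptions. Applying Moser's theorem directly yields a cofibrantly generated model structure on $s\Fun(\cJ,\sset)$ whose weak equivalences and fibrations are the natural transformations $\alpha\colon F\to G$ such that $\alpha_j\colon Fj\to Gj$ is a weak equivalence, respectively a fibration, of Kan complexes for every object $j$ of $\cJ$. The generating (trivial) cofibrations are obtained from those of $\sset$ by tensoring with representable simplicial presheaves $\Map_{\cJ}(j,-)$.

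Next I would verify that the resulting model structure is compatible with the simplicial enrichment of $s\Fun(\cJ,\sset)$, which is induced pointwise from the enrichment of $\sset$: tensors and cotensors are computed objectwise by $(K\otimes F)(j):=K\times Fj$ and $F^K(j):=(Fj)^K$. The simplicial model structure axiom (SM7), that $i\mathbin{\hat\otimes} j$ is a (trivial) cofibration whenever $i$ is a cofibration of simplicial sets and $j$ is a (trivial) cofibration in $s\Fun(\cJ,\sset)$, reduces objectwise to the analogous statement for $\sset$, which holds as $\sset$ is a simplicial model category. This part is entirely formal given the objectwise description of the tensor.

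The only mildly subtle point, and the one I would be most careful with, is checking the smallness and the acyclicity conditions needed to apply the small object argument in the transfer step, together with verifying that the simplicial structure passes correctly through the transfer; this is precisely what Moser's Theorem 5.4 is designed to handle, so the work has been done once the hypotheses are checked. With that, the proposition follows.
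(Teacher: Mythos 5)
Your proposal is correct and matches the paper's approach exactly: the paper gives no proof beyond citing Moser's Theorem 5.4 as furnishing this statement as a special instance, which is precisely the result you invoke. Your additional verifications (pointwise weak equivalences and fibrations, generating cofibrations as tensors of representables with generators of $\sset$, and the pushout-product axiom checked on generators) are the standard details that Moser's theorem packages, so nothing is missing.
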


In this model structure, the fibrant diagrams are precisely the the diagrams $W\colon\cJ\to \kan\subset\sset$ that are valued in Kan complexes, such as the functor $\Map^h_{\cQ}(A,D-)$ for any $D\colon\cJ\to\cQ$.
Not every weight is projectively cofibrant. We will give explicit models of cofibrant replacements for fibrant weights in \cref{cofibrantreplacement}.

Having endowed the simplicial category $s\Fun(\cJ,\sset)$ with a model structure, and in particular a homotopical structure, derived mapping spaces make sense.

By \cite[Lemma 2.5 (ii)]{gambino}
the functor
$$\Map_{\Fun(\cJ,\sset)}(-,-)\colon\Fun(\cJ,\sset)^{\op}\times\Fun(\cJ,\sset)\to\kan$$
is a Quillen bifunctor, 
so the derived mapping space can be realized as
$$\Map^h_{\Fun(\cJ,\sset)}(W,W')\simeq\Map_{\Fun(\cJ,\sset)}(W^{cof},W')$$
whenever $W'$ is fibrant.

Now that we understand both the expressions
$$\Map^h_{\cQ}(A,\lim{}^WD )\ \text{ and }\ \Map^h_{\Fun(\cJ,\sset)}(W,\Map^h_{\cQ}(A,D-)),$$
we are ready to give the universal property of weighted homotopy limits.
\begin{defn}
\label{weightedlimitsenrichedcategories}
The \emph{homotopy limit} of a diagram $D\colon\cJ\to\cQ$ \emph{weighted} by $W\colon\cJ\to\sset$ consists of an object $L$ of $\cQ$
together with
a simplicial natural transformation
$$\Lambda\colon W\to\Map^h_{\cQ}(L,D)$$
that induces a (simplicially) natural Joyal equivalence
$$\Map_{\cQ}^h(A,L)\simeq\Map^h_{\Fun(\cJ,\sset)}(W,\Map^h_{\cQ}(A,D-)).$$
\end{defn}

By the Yoneda lemma the limit of $D$ weighted by $W$ is determined up to equivalence.

\begin{rmk}
The homotopy weighted limit exists if and only if there exists an object $L$ for which a natural equivalence like the one above holds. In that case, the universal cone can be retrieved as image of the identity. So it makes sense to just talk about the homotopy weighted limit, rather than the full cone. We write $L\simeq\holim^WD$.
\end{rmk}

\begin{rmk}
When the weight is the constant functor $\Delta[0]_{\cQ}\colon\cJ\to\sset$, the homotopy weighted limit of a diagram $D\colon\cJ\to\cQ$ is the ordinary homotopy limit, i.e.,
$$\holim{}^{\Delta[0]_{\cJ}}D\simeq \holim D.$$
\end{rmk}

This notion of homotopy weighted limit agrees with the one given in \cite{RV7}, given when the weight $W$ is ``flexible'', and with the one used in \cite{gambino} when $\cQ$ is the category of bifibrant objects of a simplicial model structure.

Rather than deriving the universal property, one could instead try to derive the construction of strict weighted limit, aiming to find a meaningful object. The following proposition shows that the two approaches agree, as well as provides an explicit construction for homotopy weighted limits.

\begin{prop}
\label{formulahomotopylimit}
Let $D\colon\cJ\to\cQ$ be a diagram and $\cJ\to\sset$ be a weight. If $\widetilde W\colon\cJ\to\sset$ is a projective cofibrant replacement of $W$, and the strict limit of $D$ weighted by $\widetilde W$ exists in $\cQ$, then it also computes the homotopy limit if $D$ weighted by $W$, namely$$\lim{}^{\widetilde W}D\simeq\holim^{W}D.\footnote{The reader should be aware that this expression contains an abuse of notation, as the left hand side is define up to isomorphism and the right hand side is only defined up to equivalence. It should be read as: the equivalence class of the strict weighted limit represents the homotopy weighted limit.}
$$
\end{prop}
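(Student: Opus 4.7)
The plan is to leverage the strict universal property of $\lim{}^{\widetilde W}D$ and to show that, under the hypotheses, each mapping space appearing in that property coincides (up to weak equivalence) with its derived counterpart, thereby yielding exactly the natural equivalence that characterizes the homotopy weighted limit of \cref{weightedlimitsenrichedcategories}. The argument is essentially a chain of two natural weak equivalences wrapped around one natural isomorphism, with no substantial new input beyond the machinery already assembled in the paper.

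First I would record, via \cref{weightedlimitsuniversalproperty}, the strict universal property
$$\Map_{\cQ}(A,\lim{}^{\widetilde W}D)\cong \Map_{s\Fun(\cJ,\sset)}(\widetilde W,\Map_{\cQ}(A,D-)),$$
natural in $A$ in the simplicial sense. On the left-hand side, since $\Map_{\cQ}(-,-)$ is homotopical in both variables (as observed immediately before \cref{projectivemodelstructure}), the strict mapping space already represents the derived one, so
$$\Map_{\cQ}(A,\lim{}^{\widetilde W}D)\simeq \Map^h_{\cQ}(A,\lim{}^{\widetilde W}D).$$

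For the right-hand side I would invoke that $\cQ$ is a $\kan$-category, so the diagram $\Map_{\cQ}(A,D-)\colon\cJ\to\sset$ factors through $\kan$ and is therefore projectively fibrant in the model structure of \cref{projectivemodelstructure}. Combined with the projective cofibrancy of $\widetilde W$ and Gambino's Quillen bifunctor property of the enriched hom (cited just before \cref{weightedlimitsenrichedcategories}), this gives
$$\Map_{s\Fun(\cJ,\sset)}(\widetilde W,\Map_{\cQ}(A,D-))\simeq \Map^h_{s\Fun(\cJ,\sset)}(W,\Map^h_{\cQ}(A,D-)),$$
where in addition I have used $\Map_{\cQ}(A,D-)\simeq \Map^h_{\cQ}(A,D-)$ in the second slot. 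Splicing the three displays yields a natural equivalence
$$\Map^h_{\cQ}(A,\lim{}^{\widetilde W}D)\simeq \Map^h_{s\Fun(\cJ,\sset)}(W,\Map^h_{\cQ}(A,D-)),$$
which is exactly the universal property identifying $\lim{}^{\widetilde W}D$ as a representative of $\holim^{W}D$.

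The main obstacle is not conceptual but bookkeeping: one must ensure that the two weak equivalences above assemble into a genuine simplicially natural equivalence of functors in $A$, and not merely a pointwise one, so that the resulting equivalence is of the type required by \cref{weightedlimitsenrichedcategories}. This is automatic for the left-hand identification since $\Map_{\cQ}(-,-)$ is itself a simplicial bifunctor that is homotopical in $A$, and it follows on the right-hand side from the fact that the Quillen bifunctor enhancement of $\Map_{s\Fun(\cJ,\sset)}(-,-)$ is simplicial, so derivation in the cofibrant argument can be performed functorially. Once this naturality is in place, the conclusion is immediate.
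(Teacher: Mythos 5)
Your proposal is correct and follows essentially the same route as the paper: both arguments splice the strict universal property of $\lim{}^{\widetilde W}D$ between the identification of strict and derived mapping spaces in $\cQ$ (homotopicalness of $\Map_{\cQ}(-,-)$) and the identification of strict and derived mapping spaces in $s\Fun(\cJ,\sset)$ (cofibrancy of $\widetilde W$ against the fibrant diagram $\Map_{\cQ}(A,D-)$, plus invariance under the weak equivalence $\widetilde W\to W$). Your additional attention to simplicial naturality is a point the paper leaves implicit, but it does not change the argument.
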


\begin{proof}
We observe that there are equivalences
$$\begin{array}{rcl}
\Map_{\cQ}^h(A,\lim{}^{\widetilde W}D )&\simeq&\Map_{\cQ}(A,\lim{}^{\widetilde W}D )\\
&\cong&\Map_{\Fun(\cJ,\sset)}(\widetilde W,\Map_{\cQ}(A,D-))\\
&\cong&\Map^h_{\Fun(\cJ,\sset)}(\widetilde W,\Map_{\cQ}(A,D-))\\
&\simeq&\Map^h_{\Fun(\cJ,\sset)}(W,\Map^h_{\cQ}(A,D-)),
\end{array}$$
which show that $\lim^{\widetilde W}D$ has the desired universal property.
\end{proof}

Recall from \cref{existencestrictweightedlimits}
 that if $\cQ$ is complete and cotensored over $\sset$, then the strict weighted limit construction defines a functor
$$\lim{}^{(-)}(-)\colon s\Fun(\cJ,\sset)^{\op}\times s\Fun(\cJ,\cQ)\to\cQ.$$
We can endow the category $s\Fun(\cJ,\cQ)$ with the homotopical structure induced levelwise by that of $\cQ$. The analogous homotopical structure on $s\Fun(\cJ,\sset)$ is precisely the homotopical structure underlying the projective model structure from \cref{projectivemodelstructure}.

\begin{thm}
\label{weightedhomotopylimitsarederivedfunctors}
Let $\cQ$ be complete and cotensored over $\sset$.
The homotopy weighted limit exists, it is unique up to equivalence,
and can be computed as the left derived functor of the weighted limit functor
$$\lim{}^{(-)}(-)\colon s\Fun(\cJ,\sset)^{\op}\times s\Fun(\cJ,\cQ)\to\cQ.$$
\end{thm}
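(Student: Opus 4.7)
The overall strategy is to first establish existence and uniqueness directly, and then identify the homotopy weighted limit as a derived functor via the deformation-theoretic framework of Shulman. For existence, \cref{projectivemodelstructure} provides a projective cofibrant replacement $\widetilde W$ of any weight $W$, and the end formula of \cref{endformula} produces a strict limit $\lim^{\widetilde W}D$ in $\cQ$ under the completeness and cotensoring hypotheses. \Cref{formulahomotopylimit} then identifies this object as a model for $\holim^W D$. Uniqueness up to equivalence is immediate from the Yoneda lemma applied to the universal property of \cref{weightedlimitsenrichedcategories}.

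The substantive content is the identification of $\holim^W D$ with the derived functor of $\lim^{(-)}(-)$. The plan is to exhibit the pair (projective cofibrant replacement in the first variable, identity in the second) as a deformation of $s\Fun(\cJ,\sset)^{\op}\times s\Fun(\cJ,\cQ)$ onto a subcategory on which the strict weighted limit is homotopical. This reduces to checking two claims: (i) for fixed projectively cofibrant $\widetilde W$, the functor $\lim^{\widetilde W}(-)$ preserves levelwise weak equivalences of diagrams; and (ii) for fixed $D$, weak equivalences between projectively cofibrant weights induce weak equivalences of weighted limits. Once both are verified, the general machinery of derived functors via deformations produces the desired identification.

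Claim (ii) is the easier one: applying $\Map_{\cQ}(A,-)$ to the isomorphism of \cref{weightedlimitsuniversalproperty} yields $\Map_{\cQ}(A,\lim^W D)\cong\Map_{s\Fun(\cJ,\sset)}(W,\Map_{\cQ}(A,D-))$, whose first variable is right Quillen from $s\Fun(\cJ,\sset)_{proj}^{\op}$ by Gambino's theorem recalled in the introduction. Since weak equivalences in $\cQ$ are detected on mapping spaces by \cref{weakequivalences}, the claim follows. Claim (i) is the main obstacle, since $s\Fun(\cJ,\cQ)$ carries only a levelwise homotopical structure with no model-theoretic support. The plan is to argue by cell-induction on $\widetilde W$: a projectively cofibrant object is built from cells of the form $\partial\Delta[n]\cdot\cJ(j,-)\hookrightarrow\Delta[n]\cdot\cJ(j,-)$, and correspondingly $\lim^{\widetilde W}D$ is assembled from iterated pullbacks of cotensor maps $Dj^{\Delta[n]}\to Dj^{\partial\Delta[n]}$. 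Using that cotensoring against any simplicial set preserves weak equivalences in $\cQ$ (a consequence of the cotensor-hom adjunction together with the enrichment-based definition of weak equivalences in \cref{weakequivalences}) and that cotensoring against a cofibration of simplicial sets produces maps that induce Kan fibrations on all hom-spaces $\Map_{\cQ}(A,-)$, these pullbacks are homotopically well-behaved, and the induction establishes claim (i).
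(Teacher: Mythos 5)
Your overall strategy --- existence via \cref{formulahomotopylimit}, uniqueness via Yoneda, and the derived-functor identification via a left deformation given by projective cofibrant replacement in the weight variable --- is exactly the paper's. Your claim (ii) is also verified essentially as the paper does it: push the question through the representing isomorphism of \cref{weightedlimitsuniversalproperty} into $s\Fun(\cJ,\sset)$, where the Quillen bifunctoriality of the mapping-space functor (Gambino's lemma) does the work.

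The one place you diverge is claim (i), and there you have made the argument both harder than necessary and gappier than it needs to be. First, the claim is not ``the main obstacle'': it follows from the identical Yoneda-style computation you use for (ii). A levelwise weak equivalence $D_1\to D_2$ induces, by \cref{weakequivalences}, a levelwise weak equivalence of projectively fibrant diagrams $\Map_{\cQ}(A,D_1-)\to\Map_{\cQ}(A,D_2-)$; since $\widetilde W$ is projectively cofibrant and the mapping-space bifunctor on $s\Fun(\cJ,\sset)$ is Quillen, this gives $\Map_{\cQ}(A,\lim^{\widetilde W}D_1)\simeq\Map_{\cQ}(A,\lim^{\widetilde W}D_2)$ naturally in $A$, whence $\lim^{\widetilde W}D_1\simeq\lim^{\widetilde W}D_2$. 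No model-theoretic structure on $s\Fun(\cJ,\cQ)$ is needed, because the universal property transfers everything into $\sset$-valued diagrams; this is why the paper treats the homotopical-ness of the bifunctor on the deformed subcategory in a single displayed computation. Second, the cell induction you propose instead has a genuine hole: a projectively cofibrant weight is in general only a retract of a possibly transfinite cell complex, so $\lim^{\widetilde W}D$ is an inverse limit of a transfinite tower, and in a category that is merely homotopical (no fibrations, no model structure) there is no a priori reason that such a limit preserves weak equivalences. To repair this you would have to route the whole induction through the hom-spaces $\Map_{\cQ}(A,-)$ anyway (using that these preserve limits because $\cQ$ is cotensored, so ordinary limits are conical), at which point you have reconstructed, laboriously, the one-line argument above. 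Cut the cell induction and state claim (i) as a corollary of the same computation as claim (ii).
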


The argument involves the formula for left derived functors in terms of left deformations. We refer the reader to \cite[\textsection 2.2]{RiehlCHT} for more details on this account.

\begin{proof}[Proof of \cref{weightedhomotopylimitsarederivedfunctors}]
Choose a functorial projectively cofibrant replacement
$$(-)^{cof}\colon s\Fun(\cJ,\sset)\to s\Fun(\cJ,\sset).$$
By the completeness assumption of $\cQ$, we know that for any weight $W\colon\cJ\to\sset$ and $D\colon\cJ\to\cQ$ the strict weighted limit $\lim{}^{W^{cof}}D$ exists in $\cQ$. In particular, by \cref{formulahomotopylimit} we know that
$$\lim{}^{W^{cof}}D\simeq\holim{}^{W}D.$$
We now show that $\lim{}^{W^{cof}}D$ also computes the left derived functor $L\lim^D$ of the strict weighted limit construction, so that
$$\lim{}^{W^{cof}}D\simeq R\lim{}^{W}D.$$

We observe that any weak equivalence $W_1\to W_2$ between projectively cofibrant weights induces equivalences
$$\begin{array}{rcl}
\Map_{\cQ}^h(A,\lim{}^{W_2}D )&\simeq&\Map^h_{\cQ}(A,\lim{}^{W_2}D )\\
&\simeq&\Map^h_{\cQ}(A,\lim{}^{W_1}D )\\
&\simeq&\Map^h_{\Fun(\cJ,\sset)}(W_1,\Map^h_{\cQ}(A,D-)),
\end{array}$$
so that in particular $\lim{}^{W_1}D\simeq \lim{}^{W_2}D$.
This means that the weighted limit functor
$$\lim{}^{(-)}(-)\colon (s\Fun(\cJ,\sset)_{cof})^{\op}\times s\Fun(\cJ,\cQ)\to\cQ$$
is homotopical, if $s\Fun(\cJ,\sset)_{cof}$ denotes the category of projectively cofibrant weights.
In other words, the functor
$(W,D)\mapsto (W^{cof},D)$
provides a \emph{left deformation} (see \cite[Def. 2.2.1]{RiehlCHT}) for the functor
\[\lim{}^{(-)}(-)\colon s\Fun(\cJ,\sset)^{\op}\times s\Fun(\cJ,\cQ)\to\cQ.$$
with respect to the category $s\Fun(\cJ,\sset)_{cof})^{\op}\times s\Fun(\cJ,\cQ)$, if we denote by $\Fun(\cJ,\sset)_{cof}$  the full subcategory of cofibrant objects.
By \cite[Theorem 2.2.8]{RiehlCHT} we can then compute the left derived functor as
$$R\lim{}^{W}D\simeq\lim{}^{W^{cof}}D.\qedhere\]
\end{proof}

It follows that the weighted homotopy limit defined a functor
$$\lim{}^{-}-\colon(s\Fun(\cJ,\sset))^{\op}\times s\Fun(\cJ,\cQ)\to\cQ,$$
which can be realized as
$$\holim^WD\simeq\lim{}^{W^{cof}}D$$
or, if the weight $W$ is already cofibrant, as
$$\holim^WD\simeq\lim{}^WD.$$
In the case of $\cQ$ being the category of bifibrant objects of a simplicial model structure $\cM$, this recovers a result of Gambino \cite{gambino}.

This formulas puts emphasis on cofibrant weights.
The adjunctions from \cref{straighteningunstraighteningqcats} can be used to produce canonical cofibrant replacements of fibrant weights $W\colon\cJ\to\kan$ whenever $\cJ$ is an (unenriched) $1$-category or the homotopy coherent realization of a simplicial set $J$.

\begin{prop}
\label{cofibrantreplacement}
\begin{enumerate}[leftmargin=*]
	\item For any $1$-category $\cJ$ and any weight $W\colon\cJ\to\kan$, the diagram
$$W^{cof}:=r_!\circ r^*(W)\colon\cJ\to\sset$$
is a cofibrant replacement for $W$.
	\item For any simplicial set $J$, and any weight $W\colon\mathfrak C[J]\to\kan$, the diagram
	$$W^{cof}:=St\circ Un(W)\colon\mathfrak C[J]\to\sset$$
is a cofibrant replacement for $W$.
\end{enumerate}
\end{prop}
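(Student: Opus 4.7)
The proof will be short and uniform for both parts, relying on two structural facts about the Quillen equivalences recalled in \cref{straighteningunstraighteningqcats}. First, in the covariant model structure on $\sset_{/N\cJ}$ (respectively $\sset_{/J}$) the cofibrations are precisely the monomorphisms, so \emph{every} object is cofibrant; this is standard and follows from the covariant model structure being a left Bousfield localization of the sliced Kan--Quillen model structure. Second, for any Quillen equivalence $F\dashv G$, the counit $FG(W)\to W$ is a weak equivalence whenever $W$ is fibrant, provided $G(W)$ is cofibrant (in that case the counit agrees with its derived counterpart, and Quillen equivalence makes the derived counit an equivalence at fibrant objects).

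With these two inputs, the template for both parts is the same. I would first observe that the weight $W$, being valued in $\kan$, is fibrant in the projective model structure on $\Fun(\cJ,\sset)$ (respectively $s\Fun(\mathfrak{C}[J],\sset)$), since fibrant objects there are precisely the levelwise-Kan diagrams.

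Next, for part (1), I would note that $r^*(W)$ is an object of $\sset_{/N\cJ}$, hence automatically cofibrant in the covariant model structure by the first input. Since $r_!$ is a left Quillen functor, $r_!r^*(W)$ is cofibrant in the projective model structure. Finally, applying the second input to the Quillen equivalence $r_!\dashv r^*$ at the fibrant object $W$ with $r^*(W)$ cofibrant shows that the counit $r_!r^*(W)\to W$ is a weak equivalence. Together these two assertions say that $W^{cof}$ is indeed a cofibrant replacement of $W$. Part (2) is completely parallel, using the Quillen equivalence $St\dashv Un\colon\sset_{/J}\rightleftarrows s\Fun(\mathfrak{C}[J],\sset)$: the object $Un(W)\in\sset_{/J}$ is cofibrant, $St$ is left Quillen so $St\,Un(W)$ is projectively cofibrant, and the counit $St\,Un(W)\to W$ is a weak equivalence in the projective model structure.

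The only point requiring care, and the one I would flag as the main (albeit minor) obstacle, is the verification that all objects in the covariant model structure are cofibrant; this is a folklore fact but must be cited precisely, either from Lurie's \cite{htt} or from Heuts--Moerdijk, to avoid reproving it. Once this is recorded, the rest of the argument is formal unwinding of the Quillen equivalence properties and requires no further calculation.
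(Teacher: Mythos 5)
Your proposal is correct and follows essentially the same route as the paper's own proof: both arguments rest on the fact that every object of the covariant model structure is cofibrant (so $r^*(W)$, resp.\ $Un(W)$, is cofibrant and the strict counit computes the derived counit), that $W$ is projectively fibrant because it is valued in Kan complexes, and that the derived counit of a Quillen equivalence is a weak equivalence at fibrant objects, with cofibrancy of $W^{cof}$ following from $r_!$, resp.\ $St$, being left Quillen. The only difference is presentational: you flag the all-objects-cofibrant fact as needing a precise citation, whereas the paper asserts it in passing.
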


\begin{proof}
Since the relative nerve $r^*(W)$ (as well as any other object) is cofibrant in the covariant model structure, the derived counit of the diagram $W$ can be realized as the strict counit $\epsilon_W\colon (r_!\circ r^*)(W)\to W$.
Given that the pair $(r_!,r^*)$ is a Quillen equivalence and the diagram $W$ is fibrant by assumption, the (derived) counit $\epsilon_W$ is a weak equivalence. In particular $W$ is equivalent to $W^{cof}=(r_!\circ r^*)(W)$, which is cofibrant since $r_!$ is left Quillen and $r^*(W)$ is cofibrant. This concludes the proof of (1), and the proof of (2) is analogous.
\end{proof}

It is interesting to note that both models of cofibrant replacements specialize to established models of cofibrant replacements for the constant weights at $\Delta[0]$.

\begin{rmk}
\label{replacementconical weight}
The value of the cofibrant replacement of the constant weight $\Delta[0]\colon N\cJ\to\sset$ can be computed using the formulas from \cref{straighteningunstraighteningqcats}.
\begin{enumerate}[leftmargin=*]
	\item[(a)] When $\cJ$ is an ordinary $1$-category, the value of the cofibrant replacement of the constant weight $\Delta[0]_{N\cJ}\colon N\cJ\to\kan$ on an object $j$ of $\cJ$ can be computed as
$$\begin{array}{rclr}
\quad\quad\quad(r_!\circ r^*(\Delta[0]_{N\cJ}))(j)&\cong&(r_!(\id_{N\cJ}: N\cJ\to N\cJ))(j)\\
&\cong&N\cJ\times_{N\cJ}N(\cJ_{/_j})\\
&\cong&N(\cJ_{/_j}).
\end{array}$$
This is precisely the weight for homotopy limits which was considered e.g.~in \cite[\textsection 14.8.5]{Hirschhorn} or \cite[Example 5.2.8]{RV2} and identified as a cofibrant replacement for the constant weight $\Delta[0]_{N\cJ}\colon\cJ\to\kan$.
	\item[(b)] When $\cJ=\mathfrak C[J]$ is the homotopy coherent realization of a simplicial set $J$, the value of the cofibrant replacement of the constant weight $\Delta[0]_{\mathfrak C[J]}\colon\mathfrak C[J]\to\kan$ on a vertex $j$ of $J$ can be computed as
$$\begin{array}{rcl}
(St\circ Un(\Delta[0]_{\mathfrak C[J]}))(j)&\cong&(St(\id_J: J\to J))(j)\\
&\cong&\Map_{\mathfrak C[(\Delta[0]\star J)\amalg_JJ]}(0,j)\\
&\cong&\Map_{\mathfrak C[\Delta[0]\star J]}(0,j).
\end{array}$$
This is precisely the weight for homotopy limits considerd in \cite[Definition 4.1.6]{RV7} and identified as a cofibrant replacement for the constant weight $\Delta[0]_{\mathfrak C[J]}\colon\mathfrak C[J]\to\kan$.
\end{enumerate}
\end{rmk}

\begin{ex}
Recall that $\Gamma$ denotes the cospan shape category $a\leftarrow b\rightarrow c$.
Consider the constant weight $\Delta[0]_{\Gamma}\colon\Gamma\to\sset$
$$\xymatrix{\Delta[0]\ar[r]&\Delta[0]&\ar[l]\Delta[0].}$$
Given any diagram $\Gamma\to\cQ$ in a $\kan$-category $\cQ$,
$$\xymatrix{A\ar[r]^-F&B&\ar[l]_-GC,}$$
the corresponding homotopy weighted limit should be naturally though of as a homotopy pullback of $F$ and $G$, so we write
$$\holim^{\Delta[0]}D=:A\times^h_BC.$$
Given that this weight is not projectively cofibrant, we discuss several cofibrant replacements, and the corresponding model of homotopy pullback.
\begin{enumerate}[leftmargin=*]
	\item Both the approaches from \cref{replacementconical weight} gives the cofibrant replacement
$\Delta[0]_{\Gamma}^{cof}$ given by
$$\xymatrix{\Delta[0]\ar@{^{(}->}[r]&\Lambda^2[2]&\ar@{_{(}->}[l]\Delta[0]}.$$
Indeed, with the formulas from (a) we obtain
$$\Delta[0]_{\Gamma}^{cof}(a)=N(\Gamma_{/_a})\cong\Delta[0]\cong N(\Gamma_{/_c})=\Delta[0]_{\Gamma}^{cof}(c)$$
$$\quad\text{and }\Delta[0]_{\Gamma}^{cof}(b)=N(\Gamma_{/_b})\cong\Delta[1]\amalg_{\Delta[0]}\Delta[1]=\Lambda^2[2].$$
On the other hand, noticing that $\Gamma\cong\mathfrak C[N\Gamma]$, the cofibrant replacement (b) is also available. Using for instance the description of mapping simplicial sets of $\mathfrak C[\Delta[0]\star\Gamma$ in terms of \emph{necklaces} in $\Delta[0]\star\Gamma$ (see \cref{appendix}), we see that
$$
\quad\quad\Delta[0]_{\Gamma}^{cof}(a)=\Map_{\mathfrak C[\Delta[0]\star\Gamma]}(0,a)\cong\Delta[0]\cong\Map_{\mathfrak C[\Delta[0]\star\Gamma]}(0,a)\cong\Delta[0]_{\Gamma}^{cof}(c),
$$
$$
\text{ and }\Delta[0]_{\Gamma}^{cof}(b)=\Map_{\mathfrak C[\Delta[0]\star\Gamma]}(0,b)\cong\Lambda^2[2].
$$
In particular, the homotopy pullback of any functor $D\colon\cJ\to\cQ$ in a $\kan$-category $\cQ$ can be computed as
$$A\times^h_{B} C:=\holim^{\Delta[0]_{\Gamma}} D\simeq\lim{}^{\Delta[0]_{\Gamma}^{cof}}D.$$
	\item The comma weight $W^{comma}\colon\Gamma\to\sset$ from \cref{comma},
$$\xymatrix{\{0\}\ar@{^{(}->}[r]&\Delta[1]&\ar@{_{(}->}[l]\{1\}},$$
is also a projectively cofibrant replacement for the constant weight.
In particular, the homotopy pullback of any functor $D\colon\cJ\to\cQ$ in a $\kan$-category $\cQ$ can be computed as
$$A\times^h_{B} C:=\holim^{\Delta[0]} D\simeq\lim{}^{W^{comma}}D=F\downarrow_{B}C.$$
$$A\times^h_{B} C:=\holim^{\Delta[0]_{\Gamma}} D\simeq\lim{}^{\Delta[0]_{\Gamma}^{cof}}D.$$
\end{enumerate}
\end{ex}

Using upcoming results we will be able to prove at the end of \cref{Comparison of weighted limits in different models} the following analog of \cref{weightedlimitsdiscretearelimits}.

\begin{thm}
\label{allweightedhomototopylimitsareconstantenriched}
Let $W\colon\mathfrak C[J]\to\kan$ be a weight and $\cQ$ a $\kan$-category that admits all weighted homotopy limits. The homotopy weighted limit of a diagram $D\colon\mathfrak C[J]\to\cQ$ can be realized as
$$\holim{}^WD\simeq\holim\left(\mathfrak C[Un(W)]\to\mathfrak C[J]\stackrel{D}{\longrightarrow}\cQ\right).$$
\end{thm}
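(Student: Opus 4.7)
The plan is to combine Theorem C (which compares homotopy weighted limits in a $\kan$-category with weighted limits in its homotopy coherent nerve) with Theorem D (which realizes any weighted limit in a quasi-category as an unweighted limit of a fattened diagram), applying Theorem C twice: once for the weight $W$ and once for the trivial weight of the precomposed diagram.

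More precisely, I would let $d\colon J\to\mathfrak N[\cQ]$ denote the diagram adjoint to $D$ under the adjunction $\mathfrak C\dashv\mathfrak N$. The first step is to invoke Theorem C, which gives
\[
\holim{}^W D\;\simeq\;\lim{}^{Un(W)}d
\]
in the quasi-category $\mathfrak N[\cQ]$ (together with the observation that the right-hand side exists, since by assumption $\cQ$ admits all homotopy weighted limits). Next I apply Theorem D to the diagram $d$ with weight $Un(W)\colon\tilde J\to J$ to obtain
\[
\lim{}^{Un(W)}d\;\simeq\;\lim\bigl(\tilde J\xrightarrow{Un(W)}J\xrightarrow{d}\mathfrak N[\cQ]\bigr),
\]
reducing the weighted limit on the right to an ordinary (unweighted) limit of the composite diagram $d\circ Un(W)$.

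The third step is to observe that, by naturality of the $\mathfrak C\dashv\mathfrak N$ adjunction, the composite $d\circ Un(W)\colon\tilde J\to\mathfrak N[\cQ]$ is adjoint to the simplicial functor $D\circ\mathfrak C[Un(W)]\colon\mathfrak C[\tilde J]\to\cQ$. Applying Theorem C once more, this time with the constant weight (noting, as recorded in the discussion of the straightening-unstraightening correspondence, that the unstraightening of the constant weight at the point is the identity, so that ``weighted'' and ``unweighted'' coincide for this case), we get
\[
\lim\bigl(d\circ Un(W)\bigr)\;\simeq\;\holim\bigl(\mathfrak C[\tilde J]\xrightarrow{\mathfrak C[Un(W)]}\mathfrak C[J]\xrightarrow{D}\cQ\bigr).
\]
Concatenating the three equivalences yields the desired formula.

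The main obstacle is the application of Theorem D, which for the moment is stated but not yet proven in the excerpt and encapsulates the heart of the argument; the remaining steps are essentially a clean bookkeeping exercise in the $(\mathfrak C,\mathfrak N)$-adjunction, combined with the fact that unstraightening sends the terminal weight to the identity. One should also verify that the existence hypotheses propagate correctly through each step (the strict weighted limits in $\mathfrak N[\cQ]$ are produced by Theorem C from the assumed existence of homotopy weighted limits in $\cQ$, and the final homotopy conical limit on the right is again produced by invoking the completeness hypothesis on $\cQ$).
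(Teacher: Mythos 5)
Your proposal is correct and follows essentially the same route as the paper: the paper's proof likewise chains \cref{maintheorem} (Theorem C) applied to $W$, then \cref{allweightedhomototopylimitsareconical} (Theorem D) to convert the weighted limit of $d$ into the unweighted limit of $d\circ Un(W)$, and then \cref{maintheorem} again for the constant weight to land back in $\cQ$. Your additional remarks on the naturality of the $(\mathfrak C,\mathfrak N)$-adjunction and on unstraightening sending the terminal weight to the identity are exactly the bookkeeping the paper leaves implicit.
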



\subsection{Weighted limits in a quasi-category}
\label{Weighted limits a quasi-category}
When working with $(\infty,1)$-cat\-egories in the form of quasi-categories, we are able to generalize the universal property for weighted limits in the form (b) from \cref{weightedlimitsuniversalpropertydiscrete}, thanks to the construction of the quasi-category of weighted cones, discussed in \cref{weightedconesqcat,weightedconesqcatfat}.

Let $Q$ be a an $(\infty,1)$-category in the form of a quasi-category. A cone over the diagram $d\colon J\to Q$ consists of a vertex $a$ of $Q$ together with a limit cone
$\lambda\colon\Delta[0]\to Q_{/d}$ from $a$ to $d$. By definition of slice, $\lambda$ is represented by a map $\lambda\colon\Delta[0]\star^p J\to Q$ that makes the following diagram commute
$$\xymatrix@C=3cm@R=.5cm{J\ar[rd]^-d\ar@{^{(}->}[d]&\\
\Delta[0]\star^p J\ar[r]|-\lambda&Q.\\
\Delta[0]\ar@{_{(}->}[u]\ar[ru]_-l&}$$

We recall the notion of (unweighted)
limits for diagrams $d\colon J\to Q$, which is treated in several sources, such as \cite{joyalquasicategories,htt,RV1}.

\begin{defn}[{\cite[Definition 4.1]{joyalquasicategories}}]
The terminal object of $Q$ is a vertex $t$ that enjoys the following lifting property for $n\ge1$:
$$\xymatrix@R=.3cm{\Delta[0]\ar[r]_-n\ar@/^2pc/[rr]^-{t}&\partial\Delta[n]\ar[r]\ar@{^{(}->}[d]&Q.\\
&\Delta[n]\ar@{-->}@/_1pc/[ru]&}$$
\end{defn}

\begin{defn}[{\cite[Definition 4.5]{joyalquasicategories}}]
The \emph{limit cone} of a diagram $d\colon J\to Q$ consists of a vertex $\ell$ together with a limit cone
$\lambda\colon\Delta[0]\star J\to Q$
from $\lim d$ to $d$
that is teminal in the quasi-category of cones $Q_{/d}$.
\end{defn}

By definition of terminal objects, the condition means that $\lambda$ enjoys the following lifting property for $n\ge1$
$$\xymatrix@R=.3cm{\Delta[0]\ar[r]_-n\ar@/^2pc/[rr]^-{\lambda}&\partial\Delta[n]\ar[r]\ar@{^{(}->}[d]&Q_{/d}.\\
&\Delta[n]\ar@{-->}@/_1pc/[ru]&}$$
Using the join-slice adjunction, the lifting problem can be rewritten as
$$\xymatrix@R=.3cm{\Delta[0]\star J\ar[r]_-n\ar@/^2pc/[rr]^-{\lambda}&\partial\Delta[n]\star J\ar[r]\ar@{^{(}->}[d]&Q.\\
&\Delta[n]\star J\ar@{-->}@/_1pc/[ru]&}$$

\begin{rmk}
\label{uniquenesslimit}
By \cite[Proposition 3.5.3]{RVscratch}, there is a categorical equivalence
$$Q\downarrow_Q\ell\simeq\Delta_Q\downarrow_{Q^J}d\cong Q_{\fatslice d}\simeq Q_{/d},$$
which says that the limit $\ell$ represents the quasi-category of cones over $d$.
The quasi-categorical Yoneda Lemma
implies that the limit $\ell$ is determined up to equivalence in $Q$. When the limit exists, we write $\ell\simeq\lim d$.

In fact, this condition is sufficient: the diagram $d\colon J\to Q$ admits a limit in $Q$ if and only if the quasi-category of cones is representable, i.e., if there exists an object $\ell$ in $Q$ such that
$$Q\downarrow_Q\ell\simeq Q_{/d},$$
and in that case the universal cone can be retrieved as the image of the identity of $\ell$. So it makes sense to just talk about the limit, rather than the full cone.

By \cite[Proposition 5.2.11]{RV2}, if $Q$ is a complete quasi-category, namely it admits all limits of shape $J$, the limit defines a functor
$$\lim\colon Q^J\to Q.$$
\end{rmk}

We now proceed to defining the limit of $J$-shaped diagrams in a quasi-category weighted by some weight $p\colon\tilde J\to J$. We recall that the nature of this weight was previously discussed in \cref{straighteningunstraighteningqcats}.

\begin{defn}
\label{weightedlimitquasi-category}
The \emph{limit} of a diagram $d\colon J\to Q$ \emph{weighted} by a weight $p\colon\tilde J\to J$ is a vertex $\ell$ of $Q$  together with a weighted cone
$$\lambda\colon\Delta[0]\star^p J\to Q$$
from $\ell$ to $d$ that is teminal in the quasi-category of weighted cones $Q^{p}_{/d}$.
\end{defn}

By definition of terminal object, the condition means that $\lambda$ enjoys the following lifting property for $n\ge1$:
$$\xymatrix@R=.3cm{\Delta[0]\ar[r]_-n\ar@/^2pc/[rr]^-{\lambda}&\partial\Delta[n]\ar[r]\ar@{^{(}->}[d]&Q^{p}_{/d}.\\
&\Delta[n]\ar@{-->}@/_1pc/[ru]&}$$
Using the weighted join-slice adjunction from \cref{adjunctionweightedjoinslice}, the lifting problem can be rewritten as
$$\xymatrix@R=.3cm{\Delta[0]\star^p J\ar[r]_-n\ar@/^2pc/[rr]^-{\lambda}&\partial\Delta[n]\star^p J\ar[r]\ar@{^{(}->}[d]&Q.\\
&\Delta[n]\star^p J\ar@{-->}@/_1pc/[ru]&}$$

\begin{rmk}
\label{uniquenessweightedlimit}
Combining \cite[Proposition 3.5.3]{RVscratch} with \cref{slicevsfatslice,fatslicevscomma} we find is a categorical equivalence
$$Q\downarrow_Q\ell\simeq\Delta_Q\downarrow_{Q^{\tilde J}}(d\circ p)\cong Q^{p}_{\fatslice d}\simeq Q^{p}_{/d},$$
which says that the weighted limit $\ell$ represents the quasi-category of cones over the diagram $d$.
By the quasi-categorical Yoneda Lemma implies that the weighted limit $\ell$ is determined up to equivalence in $Q$. When the weighted limit exists, we write $\ell\simeq\lim{}^pd$.

As before, diagram $d\colon J\to Q$ admits a limit weighted by $p\colon\tilde J\to J$ in $Q$ if and only if the quasi-category of weighted cones is representable, i.e., if there exists an object $\ell$ in $Q$ such that
$$Q\downarrow_Q\ell\simeq Q^p_{/d},$$
and in that case the universal weighted cone can be retrieved as the image of the identity of $\ell$. So it makes sense to just talk about the weighted limit, rather than the full cone.
\end{rmk}

\begin{rmk}
When the weight is the identity $\id_{J}\colon J\to J$, the weighted limit of a diagram $d\colon J\to Q$ is the ordinary limit, i.e.,
$$\lim{}^{\id_J}d\simeq\lim d.$$
\end{rmk}

The following shows that every weighted limit in a quasi-category can be realized as an ordinary limit of a suitably fatter diagram, and generalizes \cref{weightedlimitsdiscretearelimits}.
\begin{thm}
\label{allweightedhomototopylimitsareconical}
Let $p\colon\tilde J\to J$ be a weight. The homotopy weighted limit of a diagram $d\colon J\to Q$ can be realized as
$$\lim{}^pd\simeq\lim\left(d\circ p\colon\tilde J\stackrel{p}{\longrightarrow}J\stackrel{d}{\longrightarrow}Q\right).$$
\end{thm}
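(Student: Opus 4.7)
The plan is to show that the quasi-category of weighted cones $Q^p_{/d}$ is equivalent to the quasi-category of ordinary cones $Q_{/(d\circ p)}$, so that the terminal objects (which represent the two limits) must correspond. The key ingredients are already assembled earlier in the paper.

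First I would chain together the following identifications. By \cref{fatslicevscomma} applied to the weight $p$, there is an isomorphism
\[
Q^p_{\fatslice d}\;\cong\;\Delta_Q\downarrow_{Q^{\tilde J}}(d\circ p)\;=\;Q_{\fatslice (d\circ p)}.
\]
Then, applying \cref{slicevsfatslice} to both sides (once in the weighted setting with weight $p$, once in the unweighted setting with diagram $d\circ p\colon\tilde J\to Q$) yields the zig-zag of equivalences
\[
Q^p_{/d}\;\simeq\;Q^p_{\fatslice d}\;\cong\;Q_{\fatslice (d\circ p)}\;\simeq\;Q_{/(d\circ p)}.
\]

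Next I would invoke the characterizations from \cref{uniquenesslimit} and \cref{uniquenessweightedlimit}: the weighted limit $\lim^p d$ is the (essentially unique) vertex $\ell$ such that $Q\downarrow_Q \ell\simeq Q^p_{/d}$, while the ordinary limit $\lim(d\circ p)$ is the vertex $\ell'$ such that $Q\downarrow_Q \ell'\simeq Q_{/(d\circ p)}$. Combined with the equivalence above, both universal properties pick out the same object of $Q$ up to equivalence; equivalently, a terminal object in $Q^p_{/d}$ corresponds under the equivalence to a terminal object in $Q_{/(d\circ p)}$, since terminal objects are a quasi-categorical invariant. This gives $\lim^p d\simeq \lim(d\circ p)$, with the understanding that one side exists if and only if the other does.

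There is essentially no obstacle beyond bookkeeping: the work has been done in \cref{fatslicevscomma,slicevsfatslice}, and the only subtle point is to check that the equivalences in the zig-zag are equivalences \emph{under} the base, so that the resulting equivalence $Q^p_{/d}\simeq Q_{/(d\circ p)}$ is compatible with the projections to $Q$ used to extract the summit of a cone. This follows because \cref{slicevsfatslice} is obtained from a natural weak equivalence of left Quillen functors into ${}^{J/}\sset$ (resp.\ ${}^{\tilde J/}\sset$), while the isomorphism in \cref{fatslicevscomma} is the identity on the underlying vertex of $Q$ by inspection of the pushout description of $\Delta[0]\diamond^p J$ in \cref{fatjoinwithDelta0}. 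Hence the representing object matches on both sides, completing the proof.
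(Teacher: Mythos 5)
Your proposal is correct and follows essentially the same route as the paper: the paper's proof likewise identifies both limits via the chain of equivalences $Q\downarrow_Q\ell\simeq\Delta_Q\downarrow_{Q^{\tilde J}}(d\circ p)\cong Q^p_{\fatslice d}\simeq Q^p_{/d}$ recorded in \cref{uniquenesslimit,uniquenessweightedlimit}, which in turn rest on \cref{fatslicevscomma,slicevsfatslice}. Your version merely spells out the zig-zag $Q^p_{/d}\simeq Q_{/(d\circ p)}$ explicitly and adds the (reasonable) check that the equivalences are compatible with the projections to $Q$, a point the paper leaves implicit.
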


The fact that every weighted limit can be computed as an unweighted limit of an appropriate modified diagram is a well-known property for ordinary limits in $1$-categories, and it is not surprising that it holds here, since the universal properties of (weighted) limits in quasi-categories keep into account automatically the correct homotopical behavior of all objects involved.

\begin{proof}
Consider a weight $p\colon\tilde J\to J$ and a diagram $d\colon J\to Q$.
By \cref{uniquenesslimit}, the limit of the diagram $d\circ p$ has the following universal property
$$Q\downarrow_Q\lim{}(d\circ p)\simeq\Delta_Q\downarrow_{Q^{\tilde J}}d\circ p.$$
As a consequence of \cref{uniquenessweightedlimit}, it follows that \[\lim{}^pd\simeq\lim{}(d\circ p).\qedhere\]
\end{proof}

\begin{rmk}
As a consequence of \cref{allweightedhomototopylimitsareconical}, if $Q$ is a complete quasi-category, namely it admits all limits, it also admits all the weighted limits.
\end{rmk}

Assuming the conjectured models of quasi-categories of weighted cones from \cref{comparisonGHN,sliceforcotensored}, we can easily deduce that, when $Q$ is tensored and cotensored over $\mathfrak N[\kan]$, the definition of weighted limit recovers that by Gepner-Haugseng-Nikolaus from \cite[\textsection2]{GHN}.

\begin{consequence}
\label{weightedlimitcotensored}
If $Q$ is a quasi-category that is tensored and cotensored over $\mathfrak N[\kan]$, for any diagram $d\colon J\to Q$ and any weight $p\colon\tilde J\to J$ the weighted limit of $d$ can be computed as
$$\lim{}^pd\simeq\lim\big(\xymatrix@C=1.2cm{\Tw(J)\ar[r]^-{(\partial_0,\partial_1)}&J^{\op}\times J\ar[rr]^-{(st(p)^{bifib})^{\op}\times d}&&\mathfrak N[\kan]^{op}\times Q\ar[r]^-{[-,-]}&Q}\big).$$
\end{consequence}

\begin{proof}[Conditional proof]
Assuming \cref{comparisonGHN,sliceforcotensored}, we would have Joyal equivalences
$$Q^p_{\fatslice _d}\simeq st(p)^{bifib}\downarrow_{\mathfrak N[\kan]^J}\Hom_{Q}(-,d)\simeq Q\downarrow_Q[[st(p)^{bifib},d]].$$
It would then follow that
\begin{equation}
\label{equationend}
\lim{}^pd\simeq[[st(p)^{bifib},d]]:=\lim\big(\Tw(J)\to J^{\op}\times J\to\mathfrak N[\kan]^{\op}\times Q\to Q\big),
\end{equation}
as desired.
\end{proof}

\subsection{Comparison of weighted limits in different models}
\label{Comparison of weighted limits in different models}

Denote by $\mathfrak C\colon\scat\to\sset$
the \emph{homotopy coherent realization},
namely the left adjoint of the Cordier's \emph{homotopy coherent nerve functor} $\mathfrak N\colon\sset\to\scat$ from \cite{cordier}.  The descriptions of these functors are reviewed e.g.~in \cite{riehlnecklace}.

In virtue of this adjunction, any  diagram $D\colon\mathfrak C[J]\to\cQ$ in a simplicial category $\cQ$, which we think of as a \emph{$J$-shaped homotopy coherent},
naturally transpose to a $J$-shaped diagram $d\colon J\to\mathfrak N[\cQ]$
in its homotopy coherent nerve $\mathfrak N[\cQ]$. On the other hand, any homotopy coherent weight $W\colon\mathfrak C[J]\to\sset$ can be \emph{unstraightened} to a map $Un(W)\colon\tilde J\to J$, as discussed in \cref{straighteningunstraighteningqcats}(b).

Assume as in \cref{Homotopy weighted limits in a Kan-enriched category} that $\cQ$ is a $\kan$-enriched category together with the intrinsec notion of weak equivalences determined by the enrichment.
The main result shows that the theory of homotopy weighted limits described in \cref{Homotopy weighted limits in a Kan-enriched category} for $\cQ$ and the theory of weighted limits developed in \cref{Weighted limits a quasi-category} for its homotopy coherent nerve $\mathfrak N[\cQ]$ agree.

Recall the cofibrant replacement construction $W^{cof}$ for weights $W\colon\mathfrak C[J]\to\kan$ from \cref{cofibrantreplacement}(b).

\begin{thm}
\label{maintheorem}
Let $J$ be a small category. For any weight $W\colon\mathfrak C[J]\to\kan$ and $D\colon\mathfrak C[J]\to\cQ$ a homotopy coherent diagram in $\cQ$, if $\holim^{W}D$ exists then $\lim{}^{Un(W)}d$ exists and
$$\lim{}^{Un(W)}d\simeq\holim^{W}D.$$
\end{thm}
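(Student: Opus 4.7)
The plan is to establish the equivalence via a three-step reduction: eliminate the weight on the quasi-category side, translate to the $\kan$-category side via the $\mathfrak C \dashv \mathfrak N$ adjunction, and finally identify the resulting homotopy limit with $\holim^W D$. First, I would apply \cref{allweightedhomototopylimitsareconical} to rewrite $\lim^{Un(W)}d \simeq \lim(d \circ Un(W))$ where $d \circ Un(W)\colon \tilde J \to \mathfrak N[\cQ]$ is the unweighted precomposition. This reduces the problem to an \emph{unweighted} limit in $\mathfrak N[\cQ]$, at the cost of replacing $J$ by the fatter shape $\tilde J$. Second, passing across the adjunction $\mathfrak C \dashv \mathfrak N$, this unweighted diagram transposes to the homotopy coherent diagram $D \circ \mathfrak C[Un(W)]\colon \mathfrak C[\tilde J] \to \cQ$, and I would invoke the analog of Riehl--Verity's comparison of unweighted limits from \cite[\textsection 5]{RV7} --- which is precisely the special case of the present theorem where $W$ is the terminal weight, since $Un$ applied to the constant $\Delta[0]$-diagram yields $\id_J$ --- to obtain $\lim(d \circ Un(W)) \simeq \holim(D \circ \mathfrak C[Un(W)])$.

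Third, and this is the enriched heart of the matter, I would prove the equivalence $\holim(D \circ \mathfrak C[Un(W)]) \simeq \holim^W D$ inside $\cQ$. By \cref{formulahomotopylimit}, the right-hand side is computed as the strict weighted limit $\lim^{W^{cof}} D$, where by \cref{cofibrantreplacement}(2) one may take $W^{cof}(j) = \Map_{\mathfrak C[\Delta[0] \star^{Un(W)} J]}(0, j)$. Similarly the left-hand side equals $\lim^{V^{cof}}(D \circ \mathfrak C[Un(W)])$, with $V$ the constant weight on $\mathfrak C[\tilde J]$ and $V^{cof}(i) = \Map_{\mathfrak C[\Delta[0] \star \tilde J]}(0, i)$. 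The identification then reduces to establishing an equivalence of projectively cofibrant weights $\mathfrak C[Un(W)]_!(V^{cof}) \simeq W^{cof}$ on $\mathfrak C[J]$, after which the classical change-of-variable formula $\lim^{F_! V}(D) \cong \lim^V(D \circ F)$ for strict enriched weighted limits yields the claim.

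The principal obstacle lies in this last identification: $\mathfrak C$ does not preserve colimits on the nose, so one cannot simply read off the equivalence from the pushout expression $\Delta[0] \star^{Un(W)} J \cong (\Delta[0] \star \tilde J) \amalg^{Un(W)}_{\tilde J} J$. Nevertheless, since one leg of this pushout is a cofibration, \cite[Proposition A.2.4.4]{htt} guarantees it is a homotopy pushout, which together with the coend computation of $\mathfrak C[Un(W)]_!(V^{cof})$ should yield the needed Bergner-equivalence. An alternative route that bypasses this coend calculation is to combine \cref{slicevsfatslice,fatslicevscomma} to rewrite the quasi-category of weighted cones $\mathfrak N[\cQ]^{Un(W)}_{/d}$ as a comma quasi-category, compute its fibers over each vertex $A$ using the $\mathfrak C \dashv \mathfrak N$ adjunction together with the formula for $W^{cof}$ from \cref{straighteningunstraighteningqcats}(b), and verify directly that this fiber models $\Map^h_{s\Fun(\mathfrak C[J],\sset)}(W, \Map^h_\cQ(A, D-))$, which by definition represents $\holim^W D$.
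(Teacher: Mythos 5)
Your proposal takes a genuinely different route from the paper. The paper proves \cref{maintheorem} by a direct combinatorial attack: it unwinds the terminal-object lifting property defining $\lim^{Un(W)}d$ through the weighted join--slice adjunction and the adjunction $\mathfrak C\dashv\mathfrak N$, then uses the necklace description of the mapping spaces of $\mathfrak C[\Delta[n]\star^{Un(W)}J]$ and $\mathfrak C[\partial\Delta[n]\star^{Un(W)}J]$ (\cref{mainfactweighted,computation1}) to convert the lifting problem into one against cube and cubical-horn inclusions, which \cref{mainlemma} identifies with the assertion that $\Map_\cQ(A,L)\to\Map_{s\Fun(\mathfrak C[J],\sset)}(W^{cof},\Map_\cQ(A,D-))$ is a weak equivalence --- exactly the universal property of $\holim^WD$. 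In particular the paper deduces \cref{allweightedhomototopylimitsareconstantenriched} as a corollary of \cref{maintheorem}, whereas you run that logic backwards: reduce to the unweighted case via \cref{allweightedhomototopylimitsareconical} and the external unweighted comparison of \cite{RV7}, then prove the enriched change-of-shape statement directly. That architecture is sound, not circular, and arguably more conceptual; its payoff is that the necklace combinatorics is quarantined inside the already-available unweighted comparison.

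There are two concrete issues in your third step. First, an existence gap: \cref{formulahomotopylimit} only says that \emph{if} the strict limit $\lim^{W^{cof}}D$ exists then it computes $\holim^WD$; the hypothesis of the theorem is merely that $\holim^WD$ exists, which does not supply the strict weighted limits your change-of-variables formula $\lim^{F_!V}(D)\cong\lim^V(D\circ F)$ manipulates. The fix is to stay at the level of the universal property of \cref{weightedlimitsenrichedcategories}: since $F_!\dashv F^*$ is a Quillen adjunction for the projective model structures, $V^{cof}$ is cofibrant, and $\Map_\cQ(A,D-)$ is fibrant, one has a natural isomorphism
$$\Map_{s\Fun(\mathfrak C[J],\sset)}(F_!V^{cof},\Map_\cQ(A,D-))\cong\Map_{s\Fun(\mathfrak C[\tilde J],\sset)}(V^{cof},\Map_\cQ(A,DF-))$$
of already-derived mapping spaces, so an object $L$ represents one side iff it represents the other; no strict limits are needed. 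Second, the crux $F_!(V^{cof})\cong W^{cof}$ is exactly the compatibility of straightening with pushforward along $Un(W)\colon\tilde J\to J$ (essentially \cite[\textsection 2.2.1]{htt}, since $W^{cof}=St_J(Un(W))$ and $Un(W)$ is the pushforward of $\id_{\tilde J}$); your proposed detour through homotopy pushouts and a coend computation would amount to reproving this, and your remark that ``$\mathfrak C$ does not preserve colimits on the nose'' is false --- $\mathfrak C$ is a left adjoint, so the pushout $\mathfrak C[\Delta[0]\star^{Un(W)}J]\cong\mathfrak C[\Delta[0]\star\tilde J]\amalg_{\mathfrak C[\tilde J]}\mathfrak C[J]$ is strict; the genuine difficulty, which the paper's appendix addresses, is computing mapping spaces in such a pushout of simplicial categories. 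With these two repairs your outline gives a complete alternative proof.
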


\begin{rmk}
We observe that, whenever $\cQ$ admits cotensors and pullbacks, the theorem
given an explicit model for the limit of $d\colon J\to\mathfrak N[\cQ]$ weighted by $Un(W)$ . Indeed, using the analysis on weighted homotopy limits discussed in \cref{Homotopy weighted limits in a Kan-enriched category},
we can express the weighted limit of $d$ as a strict weighted limit as follows:
$$\begin{array}{rcl}
\lim{}^{Un(W)}d&\simeq&\holim{}^{W^{cof}}D\\
&\simeq&\lim{}^{W^{cof}}D\\
&\cong& \int_{j\in\mathfrak C[J]}D(j)^{W^{cof}(j)}=\int_{j\in\mathfrak C[J]}D(j)^{\Map_{\mathfrak C[\Delta[0]\star^{Un(W)}J]}(0,j)}.
\end{array}$$
\end{rmk}

To prove \cref{maintheorem}, it will be crucial to understand the homotopy coherent realization $\mathfrak C[I\star^pJ]$ of a weighted join $I\star^pJ$.
While the set of objects of $\mathfrak C[I\star^pJ]$ can be easily described as
$$\Ob(\mathfrak C[I\star^pJ])=(I\star^pJ)_0=I_0\amalg J_0,$$the mapping spaces require some work.
The following theorem, whose unweighted version first appeared as \cite[Theorem 5.3.19]{RV7}, describes the mapping spaces of $\mathfrak C[I\star^pJ]$ in terms of the mapping spaces of the simplicial categories $\mathfrak C[I\star\Delta[0]]$, $\mathfrak C[\Delta[0]\star^pJ]$, $\mathfrak C[I]$ and $\mathfrak C[J]$.
The proof exploits the description of the simplices of any homotopy coherent realization as necklaces, and is postponed until the appendix.

\begin{thm}
\label{mainfactweighted}
Let $p\colon\tilde J\to J$ be a simplicial map, and $I$ a simplicial set.
\begin{enumerate}[leftmargin=*]
\item For any vertices $a$ and $b$ of $I$ and $J$ respectively, there is an isomorphism of simplicial sets
$$\Map_{\mathfrak C[I\star^p J]}(a,b)\cong\Map_{\mathfrak C[I\star\Delta[0]]}(a,\top)\times\Map_{\mathfrak C[\Delta[0]\star^p J]}(\bot,b).$$
\item For any vertices $a$ and $a'$ of $I$, there is an isomorphism of simplicial sets
$$\Map_{\mathfrak C[I\star^p J]}(a,a')\cong\Map_{\mathfrak C[I]}(a,a').$$
\item For any vertices $b$ and $b'$ of $J$, there is an isomorphism of simplicial sets
$$\Map_{\mathfrak C[I\star^p J]}(b,b')\cong\Map_{\mathfrak C[J]}(b,b').$$
\item For any vertices $a$ and $b$ of $I$ and $J$ respectively, there is an isomorphism of simplicial sets
$$\Map_{\mathfrak C[I\star^p J]}(a,b)\cong\varnothing.$$
\end{enumerate}
\end{thm}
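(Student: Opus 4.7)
My plan is to reduce the weighted result to its unweighted counterpart \cite[Theorem 5.3.19]{RV7} via the defining pushout from \cref{weightedjoin}. Since $\mathfrak C$ is a left adjoint it preserves colimits, so
\[\mathfrak C[I\star^p J]\cong\mathfrak C[I\star \tilde J]\amalg_{\mathfrak C[\tilde J]}\mathfrak C[J].\]
I would then rely on the necklace description of simplices in mapping spaces of a homotopy coherent realization, cf.\ \cite{riehlnecklace}, to handle the four cases combinatorially.

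Parts (2), (3) and (4) should be routine. The simplicial sets $I$ and $J$ embed as ``full'' simplicial subsets of $I\star^p J$: any non-degenerate simplex whose vertices all lie in $I_0$ (resp.\ $J_0$) is already a simplex of $I$ (resp.\ $J$), and no non-degenerate simplex has its initial vertex in $J_0$ together with a later vertex in $I_0$. Consequently every necklace between the relevant endpoints is confined to $I$ or $J$ (or is empty, for (4)), and combined with the unweighted analysis of $\mathfrak C[I\star\tilde J]$ this yields the desired identifications.

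The substantive case is (1). Every non-degenerate simplex of $I\star^p J$ with initial vertex $a\in I_0$ and final vertex $b\in J_0$ arises as the image of a non-degenerate simplex of $I\star\tilde J$, and the unweighted theorem tells us that every such simplex splits uniquely as the concatenation of a non-degenerate simplex of $I\star\Delta[0]$ ending at the cone point $\top$ with a non-degenerate simplex of $\Delta[0]\star\tilde J$ starting at the cone point $\bot$. After the pushout along $p$, the second factor descends to a non-degenerate simplex of $\Delta[0]\star^p J$. Propagating this splitting through the necklace description should produce a bijection between necklaces from $a$ to $b$ in $I\star^p J$ and pairs consisting of a necklace from $a$ to $\top$ in $I\star\Delta[0]$ and a necklace from $\bot$ to $b$ in $\Delta[0]\star^p J$, glued at the shared cone vertex; recording the accompanying flagging data then assembles into the claimed product of mapping spaces.

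The main obstacle I anticipate is the combinatorial bookkeeping for the pushout along $p$: since $p$ is not required to be injective, I would have to check that identifying simplices of $\tilde J$ with their images in $J$ neither creates additional necklaces nor collapses flagging data in unexpected ways. This should reduce to a direct simplex-by-simplex verification following the template of the unweighted argument in \cite{RV7}, which is presumably why the author has chosen to defer the full proof to the appendix.
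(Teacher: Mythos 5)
Your proposal follows essentially the same route as the paper's appendix: there, too, the argument runs through the flagged-necklace description of mapping spaces, decomposes a necklace from $a$ to $b$ in $I\star^p J$ uniquely into a necklace in $I$, a single mixed bead $\sigma^I\star\sigma^{\tilde J}$, and a necklace in $J$ (\cref{necklaceinweightedjoin}), and then splits that mixed bead at the join point to produce the bijection, with the flag bookkeeping carried out explicitly. The obstacle you flag about non-injective $p$ is dispatched automatically by the levelwise formula $(I\star^pJ)_n\cong I_n\amalg J_n\amalg\coprod_{i=0}^n(I_i\times\tilde J_{n-1-i})$, which shows that mixed beads retain the full $\tilde J$-datum and are never identified or created by the pushout.
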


In particular, like for the classical join, both inclusion of $I$ and $J$ into the weighted join induce fully faithful inclusions after taking the homotopy coherent realizations.

\begin{rmk}
A compact way to describe the content of \cref{mainfactweighted} is that the homotopy coherent realization of the weighted join $I\star^p J$ can be expressed as the pullback
$$\mathfrak C[I\star^p J]\cong\mathfrak C[I\star \Delta[0]]\times_{\mathfrak C[\Delta[0]\star\Delta[0]]}\mathfrak C[\Delta[0]\star^p J]$$
of the canonical maps
$$\mathfrak C[I\star \Delta[0]]\rightarrow\mathfrak C[\Delta[0]\star \Delta[0]]\leftarrow\mathfrak C[\Delta[0]\star J]\leftarrow\mathfrak C[\Delta[0]\star^p J].$$
This is the weighted version of \cite[Theorem 5.3.19]{RV7}.
\end{rmk}

To prove \cref{maintheorem}, we will need to understand the special cases of the formulas from \cref{mainfactweighted} in the case when $I$ is a standard simplex $\Delta[n]$ or the a boundary $\partial\Delta[n]$. The result can be expressed in terms of cubes and their boundaries, which we briefly recall; the precise definitions can be found in \cite[Notation 5.1.6]{RV7}.

\begin{notn}
Let $n\ge0$.
\begin{enumerate}[leftmargin=*]
	\item Denote by $\square[n]$ the $n$-cube, realized as the simplicial set
$$\square[n]:=\Delta[1]^n.$$
	\item Denote by $\partial\square[n]$ the boundary of the $n$-cube, realized as the simplicial set
$$\partial\square[n]:=(\partial\Delta[1]\times\Delta[1]\times\dots\times\Delta[1])\cup(\Delta[1]\times\partial\Delta[1]\times\dots\times\Delta[1])\cup\dots$$
$$\quad\quad\quad\quad\quad\quad\quad\quad\quad\quad\quad\quad\quad\dots\cup(\Delta[1]\times\Delta[1]\times\dots\times\partial\Delta[1])\subset\square[n].$$
	\item For $0\le k\le n$ and $\epsilon=0,1$, denote by $\sqcap^{k}_{\epsilon}[n]$ the $k$-th cubical $n$-horn, realized as the simplicial set
 $$\sqcap^{k}_{\epsilon}[n]:=(\partial\Delta[1]\times\Delta[1]\times\dots\times\Delta[1])\cup\dots\cup(\Delta[1]\times\dots\times\{\epsilon\}\times\dots\times\Delta[1])\cup\dots$$
$$\quad\quad\quad\quad\quad\quad\quad\quad\quad\quad\quad\quad\quad\dots\cup(\Delta[1]\times\Delta[1]\times\dots\times\partial\Delta[1])\subset\partial\square[n].$$
\end{enumerate}
\end{notn}

Recall from \cref{cofibrantreplacement} the explicit model $W^{cof}$ of cofibrant replacement of fibrant weights $W\colon\mathfrak C[J]\to\kan$. The following formulas are a consequence of \cref{mainfactweighted} and the descriptions of the hom-categories of $\mathfrak C[\Delta[n]]$, $\mathfrak C[\partial\Delta[n]]$ and $\mathfrak C[\sqcap^n_0[n]]$, which can be found in \cite[Examples 5.1.7 and 5.1.10]{RV7}.

\begin{prop}
Let $p\colon\tilde J\to J$ be a simplicial map, and $n\ge0$.
\label{computation1}
\begin{enumerate}[leftmargin=*]
\item
For any vertex $x$ of $J$, there are isomorphisms of simplicial sets
$$\Map_{\mathfrak C[\Delta[n]\star^pJ]}(0,x)\cong\square[n]\times W^{cof}(x)\text{ and }\Map_{\mathfrak C[\partial\Delta[n]\star^pJ]}(0,x)\cong\sqcap^{k}_0[n]\times W^{cof}(x).$$
\item
There are isomorphisms of simplicial sets
$$\Map_{\mathfrak C[\Delta[n]\star^pJ]}(0,n)\cong\square[n-1]\text{ and }\Map_{\mathfrak C[\partial\Delta[n]\star^pJ]}(0,n)\cong\sqcap^{k}_0[n-1].$$
\end{enumerate}
\end{prop}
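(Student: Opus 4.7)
The plan is to reduce each of the four mapping spaces to a product of pieces that are either known descriptions of the homotopy coherent realizations of (boundaries of) standard simplices and small horns, or an instance of the straightening formula from \cref{straighteningunstraighteningqcats}(b). The real work has already been carried out in \cref{mainfactweighted}, so the statement is essentially a specialization of that theorem.

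For part~(1), both endpoints $0$ and $x$ sit on opposite sides of the weighted join $I\star^p J$, with $I = \Delta[n]$ or $I = \partial\Delta[n]$. Case~(1) of \cref{mainfactweighted} then yields a product decomposition
\[
\Map_{\mathfrak C[I \star^p J]}(0, x) \;\cong\; \Map_{\mathfrak C[I \star \Delta[0]]}(0, \top) \;\times\; \Map_{\mathfrak C[\Delta[0] \star^p J]}(\bot, x).
\]
The second factor is exactly $W^{cof}(x)$ by the straightening formula of \cref{straighteningunstraighteningqcats}(b). For the first factor I would use the identifications $\Delta[n]\star\Delta[0]\cong\Delta[n+1]$ and $\partial\Delta[n]\star\Delta[0]\cong\Lambda^{n+1}[n+1]$ together with the standard computations $\Map_{\mathfrak C[\Delta[n+1]]}(0,n+1)\cong\square[n]$ and $\Map_{\mathfrak C[\Lambda^{n+1}[n+1]]}(0,n+1)\cong\sqcap^{k}_0[n]$ recalled from \cite{RV7}.

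For part~(2) the situation is easier: both endpoints $0$ and $n$ sit inside $I$, so case~(2) of \cref{mainfactweighted} collapses the mapping space in the weighted join to $\Map_{\mathfrak C[I]}(0,n)$. Substituting the standard descriptions $\Map_{\mathfrak C[\Delta[n]]}(0,n)\cong\square[n-1]$ and $\Map_{\mathfrak C[\partial\Delta[n]]}(0,n)\cong\sqcap^{k}_0[n-1]$ from \cite{RV7} immediately yields the two claimed formulas.

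The only step requiring a short direct verification, rather than just a citation, is the identification $\partial\Delta[n]\star\Delta[0]\cong\Lambda^{n+1}[n+1]$ and pinning down the horn index $k$ appearing in part~(1). This is a quick unpacking of the join construction: under the identification $\Delta[n]\star\Delta[0]\cong\Delta[n+1]$, the faces of $\Delta[n+1]$ present in $\partial\Delta[n]\star\Delta[0]$ are precisely $d_0,\dots,d_n$, since $d_{n+1}$ is the unique face lying entirely in the $\Delta[n]$ summand. Matching this against the conventions of \cite{RV7} then pins down the missing face of the cube and hence the index $k$. I expect no other serious obstacle.
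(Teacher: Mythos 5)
Your proposal is correct and follows exactly the route the paper intends: the paper gives no separate proof environment, only the preceding sentence stating that the formulas follow from \cref{mainfactweighted} together with the descriptions of the hom-spaces of $\mathfrak C[\Delta[n]]$ and $\mathfrak C[\partial\Delta[n]]$ in \cite[Examples 5.1.7 and 5.1.10]{RV7}, which is precisely your argument (including the identification of the second factor with $W^{cof}(x)$ via \cref{straighteningunstraighteningqcats}(b) and the observation $\partial\Delta[n]\star\Delta[0]\cong\Lambda^{n+1}[n+1]$). Your remark that the horn index $k$ in the statement needs to be pinned down is apt, since the paper leaves it unbound there while using $\sqcap^{n}_0[n]$ later in the proof of \cref{maintheorem}.
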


We will also make use of the following technical fact.
\begin{prop}
\label{mainlemma}
If map of simplicial sets $e\colon A\to B$ is a weak equivalence, any lifting problem in $\sset$ of the following form $m\ge0$ admits a solution:
\begin{equation}
\label{LP}
\xymatrix@R=.3cm@C=.7cm{\partial\square[m]\ar[rd]\ar[dd]\ar[rrr]&&&A\ar[rd]^-e&\\
&\sqcap^{m+1}_0[m+1]\ar[dd]\ar[rrr]&&&B.\\
\square[m]\ar[rd]\ar@{-->}@/_1.5pc/[rrruu]&&&&\\
&\square[m+1]\ar@{-->}@/_1.5pc/[rrruu]&&&}
\end{equation}
\end{prop}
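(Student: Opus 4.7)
The plan is to solve the lifting problem in two coordinated stages, working in the Kan-Quillen model structure on $\sset$. In the intended application $A$ and $B$ arise as hom Kan complexes in a $\kan$-enriched category, so I would assume $A$ and $B$ are Kan complexes; otherwise one can reduce to this case by functorial fibrant replacement, since every simplicial set in sight is cofibrant. As a first stage, observe that the cubical horn inclusion $\sqcap^{m+1}_0[m+1]\hookrightarrow\square[m+1]$ is the pushout product of $\partial\square[m]\hookrightarrow\square[m]$ with $\{0\}\hookrightarrow\Delta[1]$, and therefore anodyne. Using the Kan property of $B$, extend $g\colon\sqcap^{m+1}_0[m+1]\to B$ to $\bar g\colon\square[m+1]\to B$, and set $g_1:=\bar g|_{\square[m]\times\{1\}}\colon \square[m]\to B$; by construction $g_1|_{\partial\square[m]}=e\circ f$.

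The second stage, which is the technical heart, lifts $g_1$ through $e$ relative to $\partial\square[m]$. I would consider the commutative square of Kan complexes
\[
\xymatrix{A^{\square[m]} \ar[r]^-{e_*} \ar[d] & B^{\square[m]} \ar[d] \\ A^{\partial\square[m]} \ar[r]_-{e_*} & B^{\partial\square[m]}}
\]
whose vertical restrictions are Kan fibrations and whose horizontals are weak equivalences, since Kan-Quillen is a simplicial model category and cotensors preserve weak equivalences with fibrant targets. Such a square is a homotopy pullback, so the induced map $A^{\square[m]}\to A^{\partial\square[m]}\times_{B^{\partial\square[m]}}B^{\square[m]}$ is a weak equivalence. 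Taking fibers over $f\in A^{\partial\square[m]}$ yields a weak equivalence between the Kan complex of extensions of $f$ to $\square[m]$ in $A$ and the Kan complex of extensions of $e\circ f$ to $\square[m]$ in $B$. Surjectivity on $\pi_0$ then produces both an $\tilde f\colon\square[m]\to A$ extending $f$ and a homotopy $h\colon\square[m]\times\Delta[1]\to B$ from $e\circ\tilde f$ to $g_1$ whose restriction to $\partial\square[m]\times\Delta[1]$ is constant at $e\circ f$.

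To assemble the final $\tilde g$, I would exploit that the pushout-product inclusion $(\square[m+1]\times\{1\})\cup(\partial\square[m+1]\times\Delta[1])\hookrightarrow\square[m+1]\times\Delta[1]$ is anodyne. Define a map from this subcomplex into $B$ by $\bar g$ on $\square[m+1]\times\{1\}$, the $\Delta[1]$-constant extension of $g$ on $\sqcap^{m+1}_0[m+1]\times\Delta[1]$, and the homotopy $h$ on $\square[m]\times\{1\}\times\Delta[1]$; these agree on the common face $\partial\square[m]\times\{1\}\times\Delta[1]$ as both reduce to the constant map at $e\circ f$. Fill using the Kan property of $B$ and restrict to $\square[m+1]\times\{0\}$ to produce $\tilde g$, which satisfies $\tilde g|_{\sqcap^{m+1}_0[m+1]}=g$ and $\tilde g|_{\square[m]\times\{1\}}=e\circ\tilde f$ by construction. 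The main obstacle is the second stage: recognizing the square of cotensors as a homotopy pullback and extracting from $\pi_0$-surjectivity not just the map $\tilde f$ but also a homotopy $h$ matching the rigid boundary constraint; the rest is careful cubical bookkeeping enabled by the anodyne character of the various pushout-product inclusions.
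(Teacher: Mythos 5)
Your argument is correct, but it takes a genuinely different route from the paper's. The paper forms the mapping path object $B\downarrow_B e=B^{\Delta[1]}\times_B A$, observes that the section $\iota\colon A\to B\downarrow_B e$ of the acyclic fibration $p_1$ is a weak equivalence, deduces that $p_0\colon B\downarrow_B e\to B$ is an acyclic Kan fibration precisely when $e$ is a weak equivalence, and then invokes \cite[Lemma 6.1.11]{RV7} to translate the right lifting property of $p_0$ against the cubical boundary inclusions $\partial\square[m]\hookrightarrow\square[m]$ into the two-tiered lifting problem (\ref{LP}). You instead perform that translation by hand: first fill the anodyne horn $\sqcap^{m+1}_0[m+1]\hookrightarrow\square[m+1]$ in $B$ to extract the missing face $g_1$; then compare the fibers of $A^{\square[m]}\to A^{\partial\square[m]}$ and $B^{\square[m]}\to B^{\partial\square[m]}$ over $f$ via the homotopy pullback square of cotensors to produce $\tilde f$ together with a homotopy $h$ rel $\partial\square[m]$ from $e\circ\tilde f$ to $g_1$; and finally re-glue $h$ into the filled cube using the anodyne pushout-product $(\square[m+1]\times\{1\})\cup(\partial\square[m+1]\times\Delta[1])\hookrightarrow\square[m+1]\times\Delta[1]$. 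Your fiber-comparison step carries essentially the same homotopical content as the paper's acyclicity of $p_0$, but your proof is self-contained where the paper's is a two-line reduction plus a citation; what the citation buys is precisely the cubical bookkeeping that occupies your first and third stages. One caveat: your parenthetical reduction to Kan complexes via fibrant replacement does not work as stated, since a solution of the lifting problem for $RA\to RB$ lands in the replacements rather than in $A$ and $B$ (and indeed the statement is false for general simplicial sets already at $m=0$). Your primary assumption that $A$ and $B$ are Kan complexes is, however, the right one: it is exactly what the intended application supplies, and the paper's own proof assumes it implicitly when it asserts that $B^{\Delta[1]}\to B$ is an acyclic Kan fibration.
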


\begin{proof}
We first observe that the projection $p_1\colon B\downarrow_{B}e\to A$
is an acyclic Kan fibration, being the pullback of $e\colon A\to B$ along the acyclic Kan fibration $(p_1,p_0)\colon B^{\Delta[1]}\to B$.
Also the inclusion
$$\iota\colon\xymatrix@C=2cm{A\ar[r]^-{(A,const_A,A)}&A\times_A A^{\Delta[1]}\times_A A\ar[r]^-{A\times e^{\Delta[1]}\times e}&A\times_B B^{\Delta[1]}\times_B B=B\downarrow_{B}e}$$
is a weak equivalence, being a right inverse for $p_1$.
Moreover, there is a commutative diagram
$$\xymatrix@R=.3cm{B\downarrow_{B}e\ar[rr]^-{p_0}&&B.\\
&A\ar[ru]_-e\ar[lu]^-{\iota}&}$$
It follows that the map $e$ is a weak equivalence if and only if the fibration
$$p_0\colon B\downarrow_{B}e\to B$$
is an acyclic Kan fibration of simplicial sets. Then $p_0$ has the right lifting property with respect to all cubical boundary inclusions $\partial\square[m]\hookrightarrow\square[m]$ for all $m\ge0$.
Finally, solving the lifting problem for all $m\ge0$:
$$\xymatrix@R=.3cm{\partial\square[m]\ar[r]\ar@{^{(}->}[d]&B\downarrow_{B}e\ar[d]\\
\square[m]\ar[r]\ar@{-->}[ur]&B,}$$
translates to the lifting problem (\ref{LP}),  by evoking \cite[Lemma 6.1.11]{RV7} and choosing $C=D=\Delta[0]$.
\end{proof}

We can now prove the theorem, by showing that the limit of $d$ weighted by $Un(W)$ exists in $\mathfrak N[\cQ]$ if and only if the homotopy limit of $D$ weighted by $W$ exists in $\cQ$, and in this case they are represented by the same object of $\cQ$.

\begin{proof}[Proof of \cref{maintheorem}]
By \cref{weightedlimitquasi-category}, the diagram $d$ admits a weighted limit
$$L\simeq\lim{}^{Un(W)}d$$
if and only if there exists a cone from $L$ over $d$
$$\lambda\colon\Delta[0]\to\mathfrak N[\cQ]^{Un(W)}_{/d}$$
that enjoys the lifting property for $n\ge1$:
$$\xymatrix@R=.3cm{\Delta[0]\ar[r]_-n\ar@/^2pc/[rr]^-{\lambda}&\partial\Delta[n]\ar[r]\ar@{^{(}->}[d]&\mathfrak N[\cQ]^{Un(W)}_{/d}.\\
&\Delta[n]\ar@{-->}@/_1pc/[ru]&}$$

 By \cref{adjunctionweightedjoinslice}, this is equivalent to saying that there exists a map $\lambda$ that fits into a commutative diagram
$$\xymatrix@R=.3cm{J\ar@{^{(}->}[d]\ar[rrd]^-d\\
\Delta[0]\star^{Un(W)}J\ar[rr]|-\lambda&&\mathfrak N[\cQ],\\
\Delta[0]\ar@{_{(}->}[u]\ar[rru]_-L}$$
and enjoys the lifting property for $n\ge1$:
$$\xymatrix@R=.3cm@C=1.7cm{\Delta[0]\star^{Un(W)} J\ar[r]_-{n\star^{Un(W)}J}\ar@/^2pc/[rr]^-{\lambda}&\partial\Delta[n]\star^{Un(W)} J\ar[r]\ar@{^{(}->}[d]&\mathfrak N[\cQ].\\
&\Delta[n]\star^{Un(W)} J\ar@{-->}@/_1pc/[ru]&}$$

Transposing with respect to the adjunction $(\mathfrak C,\mathfrak N)$, this is equivalent to saying that there exists a simplicial functor $\Lambda$ that fits into a commutative diagram
$$\xymatrix@R=.3cm{\mathfrak C[J]\ar@{^{(}->}[d]\ar[rrd]^-D\\
\mathfrak C[\Delta[0]\star^{Un(W)}J]\ar[rr]|-{\Lambda}&&\cQ,\\
\mathfrak C[\Delta[0]]\ar@{_{(}->}[u]\ar[rru]_-L}$$
and enjoys the lifting property for $n\ge1$:
$$\xymatrix@R=.3cm@C=1.7cm{\mathfrak C[\Delta[0]\star^{Un(W)} J]\ar[r]_-{\mathfrak C[n\star^{Un(W)}J]}\ar@/^2pc/[rr]^-{\Lambda}&\mathfrak C[\partial\Delta[n]\star^{Un(W)} J]\ar[r]\ar@{^{(}->}[d]&\cQ.\\
&\mathfrak C[\Delta[n]\star^{Un(W)} J]\ar@{-->}@/_1pc/[ru]&}$$

To solve this problem, it is enough to show that
there exists an object $L$ together with a natural transformation
$$\Lambda\colon W^{cof}=\Map_{\mathfrak C[\Delta[0]\star^{Un(W)}J]}(0,-)\to\Map_{\cQ}(L ,D)$$
that enjoys the further lifting property in $s\Fun(\mathfrak C[J],\sset)$ for all $n\ge1$ and all $A$ object of $\cQ$:\\
\ \\
$\xymatrix@R=.3cm@C=.01cm{\Map_{\mathfrak C[\partial\Delta[n]\star^{Un(W)}J]}(0,n)\ar[rd]\ar[dd]\ar[rr]&&\Map_{\cQ}(A,L)\ar[rd]&\\
&\Map_{\mathfrak C[\partial\Delta[n]\star^{Un(W)}J]}(0,\bullet)\ar[dd]\ar[rr]&&\Map_{\cQ}(A,D\bullet).\\
\Map_{\mathfrak C[\Delta[n]\star^{Un(W)}J]}(0,n)\ar[rd]\ar@{-->}@/_2.5pc/[rruu]&&&\\
&\Map_{\mathfrak C[\Delta[n]\star^{Un(W)}J]}(0,\bullet)\ar@{-->}@/_2.5pc/[rruu]&&}$

Using the explicit formulas from \cref{computation1} and the cofibrant replacement from \cref{cofibrantreplacement}(b),
this lifting problem can be rewritten as
$$\xymatrix@R=.3cm@C=.01cm{\partial\square[n-1]\ar[rd]\ar[dd]\ar[rr]&&\Map_{\cQ}(A,L)\ar[rd]&\\
&\sqcap^{n}_0[n]\times W^{cof}\ar[dd]\ar[rr]&&\Map_{\cQ}(A,D\bullet).\\
\square[n-1]\ar[rd]\ar@{-->}@/_2.5pc/[rruu]&&&\\
&\square[n]\times W^{cof}\ar@{-->}@/_2.5pc/[rruu]&&}$$

We note that $s\Fun(\mathfrak C[J],\sset)$ is tensored and cotensored over $\sset$ (with respect to pointwise tensor and cotensor, as in \cite[\textsection3.8.2]{RiehlCHT}). Using this,
the lifting problem transposes to a lifting property in $\sset$
$$\xymatrix@R=.3cm@C=.01cm{\partial\square[n-1]\ar[rd]\ar[dd]\ar[rr]&&\Map_{\cQ}(A,L )\ar[rd]&\\
&\sqcap^{n}_0[n]\ar[dd]\ar[rr]&&\Map_{s\Fun(\mathfrak C[J],\sset)}(W^{cof},\Map_{\cQ}(A,D\bullet)).\\
\square[n-1]\ar[rd]\ar@{-->}@/_2.5pc/[rruu]&&&\\
&\square[n]\ar@{-->}@/_2pc/[rruu]&&}$$
By \cref{mainlemma}, it is enough to know that the top right map
$$\Map_{\cQ}(A,L)\to\Map_{s\Fun(\mathfrak C[J],\sset)}(W^{cof},\Map_{\cQ}(A,D\bullet))$$
is a weak equivalence of Kan complexes.

Therefore, this is enough to know that there exists an object $L$ of $\cQ$ together with
a (simplicial) natural transformation
$$\Lambda\colon W^{cof}\to\Map_{\cQ}(L,D)$$
that induces a weak equivalence
$$\Map_{\cQ}(A,L)\simeq\Map_{\Fun(\mathfrak C[J],\sset)}(W^{cof},\Map_{\cQ}(A,D\bullet)),$$
namely that the homotopy limit of $D$ weighted by $W^{cof}$ exists and
$$\holim^{W^{cof}}D\simeq L.$$
It follows that
$$\holim^{W}D\simeq\holim^{W^{cof}}D\simeq L\simeq\lim{}^{Un(W)}d,$$
as desired.
\end{proof}

\begin{ex}
If  $\cQ$ is a simplicial category $\cQ$ with simplicial cotensors and pullbacks, for any diagram $D\colon\mathfrak C[N\Gamma]\to\cQ$ with image
$$\xymatrix{A&\ar[l] _-FB\ar[r]^-G&C}$$
the comma construction $f\downarrow_B g$ defines a pullback of the adjoint diagram $d\colon N\Gamma\to\mathfrak N[\cQ]$ in the homotopy coherent nerve $\mathfrak N[\cQ]$. Indeed,
$$F\downarrow_B G:=\lim{}^{W^{comma}}D\simeq\lim{}^{\Delta[0]_{\Gamma}^{cof}}D\simeq\lim{}^{Un(\Delta[0]_{\Gamma})}d=\lim d=:A\times_{B} C.$$
\end{ex}

We can now prove \cref{allweightedhomototopylimitsareconstantenriched}.

\begin{proof}[Proof of \cref{allweightedhomototopylimitsareconstantenriched}]
Combining \cref{maintheorem,allweightedhomototopylimitsareconical}, we obtain the equivalences
\[\begin{array}{rcl}
\holim{}^WD&\simeq&\holim{}^{W^{cof}}D\\
&\simeq&\lim{}^{Un(W)}d\\
&\simeq&\lim{}^{\Delta[0]}\left(Un(W)\to J\stackrel{D}{\longrightarrow}\mathfrak N[\cQ]\right)\\
&\simeq&\holim^{\Delta[0]}\left(\mathfrak C[Un(W)]\to\mathfrak C[J]\stackrel{D}{\longrightarrow}\cQ\right),\\
\end{array}\]
as desired.
\end{proof}

\appendix \label{appendix}
\stepcounter{section}
\section*{Appendix: Necklaces in a weighted join} 


\label{appendix}

In this section, we give a description of the hom-categories of a homotopy coherent realization $\mathfrak C[J]$ in terms of flagged necklaces, and then specialize the result to understand $\mathfrak C[I\star^{p}J]$.

\begin{notn}
Given $\vec n:=(n_1,\dots,n_k)$, denote by $\Delta[\vec n]$
the head-to-tail wedge of standard simplices
$$\Delta[\vec n]:=\Delta[n_1]\vee\dots\vee\Delta[n_k].$$
\end{notn}

We slightly revisit the standard terminology of necklaces from \cite[\textsection1.1]{ds} and \cite[\textsection2]{riehlnecklace}.
\begin{defn}
Let $J$ be a simplicial set.
\begin{itemize}[leftmargin=*]
\item A \emph{necklace} in $J$ consists of a map $\tau\colon\Delta[\vec n]\to J$. The necklace is \emph{totally non-degenerate} if each \emph{bead} $\tau|_{\Delta[n_k]}\colon\Delta[n_k]\to J$ represents a non degenerate simplex of $J$. 
We denote by $V(\tau):=\tau(\Delta[\vec n])_0$ the set of all vertices of $\tau(\Delta[\vec n])\subset J$
and by $J(\tau)$
the subset of all the vertices of $\tau(\Delta[\vec n])$ coming from vertices of $\Delta[\vec n]$ along which the beads of $\tau$ have been glued.
\item A \emph{flag} of degree $m$ for the necklace $\tau\colon\Delta[\vec n]\to J$ consists of a sequence of nested sets $(T_i)_{i=0}^m$ such that
$$J(\tau)=T_0\subset\dots\subset T_m=V(\tau).$$
\item  A \emph{flagged (totally non-degenerate) necklace} of degree $m$ consists of a pair $\left(\tau,(T_i)_{i=0}^m\right)$ where $\tau$ is a (totally non-degenerate) necklace and $(T_i)_i$ is a flag of degree $m$ for $\tau$. In particular, a flagged (totally non-degenerate) necklace of degree $0$ consists of just of a (totally non-degenerate) unflagged necklace $\tau$.
\end{itemize}
\end{defn}

\begin{rmk}
The collection $\Nec(J)_m$ of flagged necklaces in $J$ of degree $m$ is functorial in $J$.
\end{rmk}

\begin{rmk}[{\cite[\textsection1.1]{ds}, \cite[\textsection2]{riehlnecklace}}]
Any flagged necklace in $J$ has a source and a target. If $\Nec(J)_m(x,x')$ denotes the collection of flagged totally non-degenerate necklaces of degree $m$ in $J$ with source $x$ and target $x'$, there is a well-defined concatenation
$$[-,-]\colon \Nec(J)_m(x,x')\times \Nec(J)_m(x',x'')\to \Nec(J)_m(x,x''),$$
given by the assignment
$$\left((\tau\colon\Delta[\vec n]\to J,(T_i)_{i=0}^m),(\tau'\colon\Delta[\vec n']\to J,(T'_i)_{i=0}^m)\right)\mapsto([\tau,\tau']\colon\Delta[\vec n]\vee\Delta[\vec n']\to J,(T_i\cup T_i')_{i=0}^m).$$
\end{rmk}

The following result from \cite{riehlnecklace} records how necklaces in $J$ describe morphisms of $\mathfrak C[J]$, and is a variant of the characterization originally due to Dugger-Spivak \cite{ds}.

\begin{thm}[{\cite[Theorem 2.1]{riehlnecklace}}]
\label{thmnecklaces}
For any simplicial set $J$ and $n\ge0$, there is a bijection
$$\Map_{\mathfrak C[J]}(x,x')_m\cong \Nec(J)_m(x,x')$$
which is natural in $J$.
\end{thm}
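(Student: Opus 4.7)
The plan is to establish the bijection first for a standard simplex, then for a bare necklace shape $\Delta[\vec n]$, and finally transfer to an arbitrary simplicial set $J$ by a colimit argument that exploits the fact that $\mathfrak{C}\colon\sset\to\scat$ is a left adjoint. The starting point is the explicit description of $\mathfrak{C}[\Delta[n]]$: its mapping space $\Map_{\mathfrak{C}[\Delta[n]]}(0,n)$ is the nerve of the poset $P_{0,n}$ of subsets of $\{0,1,\ldots,n\}$ containing both $0$ and $n$, ordered by inclusion, equivalently the cube $\square[n-1]$. An $m$-simplex of this nerve is precisely a chain $T_0 \subseteq T_1 \subseteq \cdots \subseteq T_m$ in $P_{0,n}$, which is exactly the data of a flag of degree $m$ on the tautological totally non-degenerate necklace $\id\colon\Delta[n]\to\Delta[n]$. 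This verifies the theorem in the base case.

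Next, for a shape $\Delta[\vec n] = \Delta[n_1]\vee\cdots\vee\Delta[n_k]$, since $\mathfrak{C}$ preserves the relevant pushouts, the mapping space from the first to the last vertex of $\mathfrak{C}[\Delta[\vec n]]$ decomposes as the product $\prod_i\Map_{\mathfrak{C}[\Delta[n_i]]}(0,n_i)$. An $m$-simplex of this product is a tuple of chains, one per bead, which assemble into a single chain $T_0 \subseteq \cdots \subseteq T_m$ of subsets of $V(\id)$ containing $J(\id)$ and terminating at the whole vertex set, matching exactly a flag of degree $m$ on the identity necklace $\id_{\Delta[\vec n]}$.

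For a general simplicial set $J$, I would then argue that every $m$-simplex $\sigma\colon\Delta[m]\to\Map_{\mathfrak{C}[J]}(x,x')$ is represented by some necklace in $J$ equipped with a flag inherited from the previous step. Using the presentation $\mathfrak{C}[J]\cong\colim_{\Delta[n]\to J}\mathfrak{C}[\Delta[n]]$ and the fact that composition in $\mathfrak{C}[J]$ is induced by concatenation of simplices along shared vertices, any such $\sigma$ factors through $\mathfrak{C}[\tau]$ for some $\tau\colon\Delta[\vec n]\to J$ with flag determined by the previous paragraph. Choosing the unique factorization in which every bead is a non-degenerate simplex of $J$ supplies the totally non-degenerate representative, and the resulting assignment is visibly natural in $J$.

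The main obstacle is establishing injectivity: different flagged totally non-degenerate necklaces could a priori determine the same simplex of the mapping space when their images in $J$ interact in subtle ways. Following the Dugger--Spivak strategy as adapted by Riehl, the key is to prove that the totally non-degenerate flagged necklace is a unique normal form for each equivalence class under the colimit relations. This rests on the fact that every simplex of $J$ has a unique expression as a degeneracy of a non-degenerate simplex, together with a careful analysis showing that the flag data rigidly reconstructs the subdivision of the cube attached to a necklace, so that two distinct totally non-degenerate flagged necklaces cannot become identified in the colimit.
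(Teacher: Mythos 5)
This theorem is not proved in the paper at all: it is imported as \cite[Theorem 2.1]{riehlnecklace}, a variant of the Dugger--Spivak description of $\mathfrak C[J]$, so there is no internal argument to compare yours against, and citing the reference is the intended treatment. Judged on its own terms, your sketch follows the standard strategy (standard simplices, then wedges, then a colimit argument), but the base case is stated incorrectly. You claim that an $m$-simplex of $\Map_{\mathfrak C[\Delta[n]]}(0,n)$, i.e.\ a chain $T_0\subseteq\cdots\subseteq T_m$ in the poset $P_{0,n}$, ``is exactly the data of a flag of degree $m$ on the tautological necklace $\id_{\Delta[n]}$.'' By the definition of a flag used here, a flag on $\id_{\Delta[n]}$ must satisfy $T_0=J(\id)=\{0,n\}$ and $T_m=V(\id)=\{0,\dots,n\}$, which a general chain in $P_{0,n}$ need not. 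Concretely, for $n=2$ and $m=0$ the mapping space $\Delta[1]$ has two vertices, while $\id_{\Delta[2]}$ admits no degree-$0$ flag at all; the two vertices correspond to two \emph{different} totally non-degenerate necklaces in $\Delta[2]$ (the long edge $0\to 2$, and the wedge $0\to 1\to 2$), each carrying its unique degree-$0$ flag. The correct dictionary sends a chain $(T_i)$ to the necklace whose underlying simplex is the face of $\Delta[n]$ spanned by $T_m$, beaded at the vertices of $T_0$, with flag $(T_i)$; the same correction is needed in your wedge step. This is not cosmetic --- it is precisely why the theorem must quantify over all necklaces rather than fix a tautological one.

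The second issue is that the actual mathematical content --- that after passing to a general $J$ via $\mathfrak C[J]\cong\colim_{\Delta[n]\to J}\mathfrak C[\Delta[n]]$ every simplex of $\Map_{\mathfrak C[J]}(x,x')$ has a \emph{unique} totally non-degenerate flagged representative --- is exactly the part you defer to ``a careful analysis.'' Colimits in $\scat$ do not compute mapping spaces objectwise, so both surjectivity (every simplex factors through $\mathfrak C[\tau]$ for some necklace $\tau$) and the normal-form uniqueness require the explicit generators-and-relations bookkeeping of \cite{ds} and \cite{riehlnecklace}. As written, your last paragraph records what must be shown rather than showing it, so the proposal should be read as a correct outline of the known proof with a flawed base case, not as a self-contained argument.
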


Aiming to understand the morphisms of $\mathfrak C[I\star^p J]$, we look at necklaces in $I\star^p J$.
Intuitively, a necklace in a weighted join must consist of three (possibly empty) parts: a necklace in $I$, followed by 
a single simplex in $I\star\tilde J$, followed by a necklace in $J$. 
Pushing the investigation further, by nature of the join construction the simplex in the middle must consist of a simplex of $I$ and a simplex of $\tilde J$.

The following proposition makes this precise.

\begin{prop}
\label{necklaceinweightedjoin}
Let $p\colon\tilde J\to J$ be a map. Any flagged totally non-degenerate necklace of degree $m$ in a fat join $I\star^p J$
$$(\tau\colon\Delta[\vec n]\to I\star^p J,(T_i)_{i=0}^m)$$
can be uniquely written as a concatenation of three consecutive flagged necklaces of the following form:
\begin{enumerate}[leftmargin=*]
\item a flagged totally non-degenerate necklace of degree $m$ in $I$
$$(\tau^I\colon\Delta[\vec n^I]\to I,(T^I_i)_{i=0}^m)$$
whose source and target are vertices of $I$.
\item a flagged non-degenerate bead of degree $m$ in $I\star^p\tilde J$ of the form
$$(\sigma^I\star\sigma^{\tilde J}\colon\Delta[N^I+N^J]\cong\Delta[N^I]\star\Delta[N^b]\to I\star\tilde J,(S_i)_{i=0}^m)$$
whose source is a vertex of $I$ and whose target is a vertex of $\tilde J$.
\item a flagged totally non-degenerate necklace of degree $m$ in $J$.
$$(\tau^J\colon\Delta[\vec n^J]\to J,(T^J_i)_{i=0}^m)$$
whose source and target are vertices of $J$.
\end{enumerate}
\end{prop}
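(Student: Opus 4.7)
The plan is to analyze carefully the non-degenerate simplices of $I \star^p J$, use head-to-tail gluing to force a unique ordering of bead types, and then restrict the flag data to each piece.

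First I classify non-degenerate simplices of $I \star^p J$. Since
$$I \star^p J = (I \star \tilde J) \amalg_{\tilde J}^p J$$
is a pushout along the monomorphism $\tilde J \hookrightarrow I \star \tilde J$, its levelwise description reads
$$(I \star^p J)_n \cong I_n \sqcup J_n \sqcup \bigsqcup_{i+j=n-1}(I_i \times \tilde J_j),$$
with pushout identifications confined to the pure $\tilde J$-summand. It follows that each non-degenerate $n$-simplex of $I \star^p J$ is of exactly one of three types: (i) a non-degenerate simplex of $I$, (ii) a non-degenerate simplex of $J$, or (iii) a joined simplex $\sigma^I \star \sigma^{\tilde J}$ with $\sigma^I$ non-degenerate in $I$ and $\sigma^{\tilde J}$ non-degenerate in $\tilde J$, postcomposed with the canonical map $I \star \tilde J \to I \star^p J$.

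Second I exploit the head-to-tail gluing of beads. The vertices of $I \star^p J$ form the disjoint union $I_0 \sqcup J_0$. A type-(i) bead has all vertices in $I$, a type-(ii) bead has all vertices in $J$, and a type-(iii) bead has initial vertex in $I$ and terminal vertex equal to $p$ applied to the target of $\sigma^{\tilde J}$, hence lying in $J$. Since consecutive beads of a necklace share a vertex and $I_0 \cap J_0 = \varnothing$, the beads of $\tau$ must appear in the order: some (possibly zero) type-(i) beads, then at most one type-(iii) bead, then some (possibly zero) type-(ii) beads. Splitting $\tau$ at these type-change positions produces a unique wedge decomposition $\tau^I \vee (\sigma^I \star \sigma^{\tilde J}) \vee \tau^J$, with each piece totally non-degenerate because the type classification restricts coherently to the three ambient simplicial sets $I$, $I \star \tilde J$, and $J$.

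Finally I decompose the flag. The vertex set $V(\tau)$ is the union of $V(\tau^I)$, $V(\sigma^I \star \sigma^{\tilde J})$, and $V(\tau^J)$, identified at the two transition vertices shared between consecutive pieces. Setting $T^I_i := T_i \cap V(\tau^I)$, $S_i := T_i \cap V(\sigma^I \star \sigma^{\tilde J})$, and $T^J_i := T_i \cap V(\tau^J)$ produces nested sequences of subsets inside each piece whose joint union recovers $T_i$; moreover, the joint-vertex sets $J(\tau^I)$, $J(\sigma^I \star \sigma^{\tilde J})$, $J(\tau^J)$ each lie in $J(\tau) = T_0$, because internal gluing points of a piece are internal to $\tau$ and the transition vertices are joint vertices of $\tau$ by construction. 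The main obstacle is the bookkeeping at the two transition vertices, each of which lies in the vertex set of two consecutive pieces: one must verify that this double counting is compatible with the concatenation operation on flagged necklaces recalled earlier in the appendix, so that the three restricted pieces reassemble to $(\tau, (T_i)_{i=0}^m)$. Uniqueness is then immediate, since the position of the type-(iii) spanning bead and of the two type transitions are invariants of $\tau$; the edge case where $\tau$ has no type-(iii) bead should be handled by allowing one of the flanking necklaces to be a single-vertex necklace, although the intended application to $\Map_{\mathfrak C[I \star^p J]}(a,b)$ with $a \in I$ and $b \in J$ always lies in the crossing case.
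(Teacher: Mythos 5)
Your proposal is correct and follows essentially the same route as the paper's proof: both use the levelwise pushout description of $I\star^p J$ to isolate the unique ``crossing'' bead as a join $\sigma^I\star\sigma^{\tilde J}$, split the necklace into the three pieces, restrict the flag by intersecting with the vertex sets, and obtain uniqueness by exhibiting concatenation as an inverse. Your explicit classification of bead types and the directedness argument forcing their order is a slightly more careful packaging of the paper's choice of $\overline k:=\max\{k\mid\tau(\Delta[n_k])\subset I\}$, and your remark about the degenerate case with no crossing bead addresses an edge case the paper leaves implicit.
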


\begin{proof}
First let
$$\overline k:=\max\{k\ |\ \tau(\Delta[n_k])\subset I\}.$$
Note then that the bead
$$\tau|_{\Delta[\overline k+1]}\colon\Delta[\overline k+1]\to I\star^p J$$
defines a simplex in $I\star^p J$ whose source is a vertex of $I$ and whose target is a vertex of $\tilde J$. 
By unpacking the definition of the weighted join $I\star^p J$, the collection of $(\overline k+1)$-simplices of $I\star^pJ$ is given by
$$\begin{array}{rcl}
(I\star^p J)_{\overline k+1}&:=&\left(I_{\overline k+1}\amalg\coprod_{i=1}^{\overline k}(I_{i}\times\tilde J_{\overline k-i})\amalg\tilde J_{\overline k+1}\right)\amalg_{\tilde J_{\overline k+1}}J_{\overline k+1}\\
&\cong& I_{\overline k+1}\amalg\coprod_{i=1}^{\overline k}(I_{i}\times\tilde J_{\overline k-i})\amalg J_{\overline k+1}.
\end{array}$$
Thus the simplex $\tau|_{\Delta[\overline k+1]}$, which is not contained entirely in $I$ nor $\tilde J$, can be uniquely written as
$$\sigma^I\star\sigma^{\tilde J}\colon\Delta[\overline k+1]\cong\Delta[i]\star\Delta[\overline k-i]\to I\star\tilde J,$$
where $\sigma^I\colon\Delta[i]\to I$ is a simplex of $I$, and $\sigma^{\tilde J}\colon\Delta[\overline k-i]\to\tilde J$ is a simplex of $\tilde J$. We now build the three pieces.
\begin{enumerate}[leftmargin=*]
\item If we set $\vec n^I:=(n_1,\dots,n_{\overline k})$, we get a necklace in $I$
$$\tau^I:=\tau|_{\Delta[\vec n^I]}\colon\Delta[\vec n^I]\to I,$$
endowed with the flag $T_i^I:=T_i\cap V(I)$.
\item If we set $N^I:=i$ and $N^J:= \overline k-i$, we get a simplex in $I\star\tilde J$
$$\sigma^I\star\sigma^{\tilde J}=\tau|_{\Delta[\overline k]}\colon\Delta[\overline k+1]\cong\Delta[i]\star\Delta[\overline k-i]\to I\star\tilde J,$$
endowed with the flag $S_i:=T_i\cap V(\tau|_{\Delta[n_{\overline k}]})$.
\item If we set $\vec n^J:=(n_{\overline k+2},\dots,n_K)$, we get a necklace in $J$
$$\tau^J:=\tau|_{\Delta[\vec n^J]}\colon\Delta[\vec n^J]\to J,$$
endowed with the flag $T_i^J:=T_i\cap V(J)$.
\end{enumerate}
This proves the existence of the desired decomposition. For uniqueness, we observe that the assignment is in fact invertible and there is no loss of information.
Indeed, each triple of concatenable flagged necklaces $(\tau^I,\sigma^I\star\sigma^{\tilde J},\tau^J)$ of the form (1), (2) and (3) (as in the statement of the proposition) can be concatenated to form a flagged necklace $[\tau^I,\sigma^I\star\sigma^{\tilde J},\tau^J]$ in $I\star^p J$. The operation
$$(\tau^I,\sigma^I\star\sigma^J,\tau^J)\mapsto[\tau^I,\sigma^I\star\sigma^{\tilde J},\tau^J]$$
can be checked to be an inverse for the decomposition
$$\tau\mapsto(\tau^I,\sigma^I\star\sigma^{\tilde J},\tau^J)$$
that we described in the first part of this proof.
\end{proof}

Specializing to the case $I:=\Delta[0]$, we obtain the following.

\begin{prop}
\label{necklaceinfatjoinwithDelta0}
Let $p\colon\tilde J\to J$ be a map. Any flagged totally non-degenerate necklace of degree $m$ in a fat join $\Delta[0]\star^p J$
$$(\tau\colon\Delta[\vec n]\to \Delta[0]\star^p J,(T_i)_{i=0}^m)$$
can be uniquely written as a concatenation of two flagged consecutive necklaces of the following form:
\begin{enumerate}[leftmargin=*]
\item a flagged non-degenerate bead of degree $m$ in $\Delta[0]\star\tilde J$ of the form
$$(\top\star\sigma^{\tilde J}\colon\Delta[1+N]\cong\Delta[0]\star\Delta[N]\to\Delta[0]\star\tilde J,(S_i)_{i=0}^m)$$
from the cone boint $\top$ and to a vertex of $\tilde J$;
\item a flagged totally non-degenerate necklace of degree $m$ in $J$.
$$(\tau^J\colon\Delta[\vec n^J]\to J,(T^J_i)_{i=0}^m)$$
whose source and target are vertices of $J$.
\end{enumerate}
\end{prop}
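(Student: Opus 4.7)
The proposal is to obtain \cref{necklaceinfatjoinwithDelta0} as an immediate specialization of \cref{necklaceinweightedjoin} to the case $I = \Delta[0]$, combined with two elementary observations about $\Delta[0]$ and about non-degenerate simplices in a join.

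Applying \cref{necklaceinweightedjoin} with $I = \Delta[0]$ produces a unique decomposition of any flagged totally non-degenerate necklace in $\Delta[0] \star^p J$ into a triple $(\tau^I, \sigma^I \star \sigma^{\tilde J}, \tau^J)$, consisting of a totally non-degenerate necklace in $\Delta[0]$, a flagged non-degenerate bead in $\Delta[0] \star \tilde J$, and a totally non-degenerate necklace in $J$. The task is then to show that the first factor is trivial and that the middle bead collapses to the form $\top \star \sigma^{\tilde J}$ specified in the statement.

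For the first factor, observe that $\Delta[0]$ has no non-degenerate simplices of positive dimension. Since $\tau^I$ is required to be a totally non-degenerate necklace with source and target vertices of $\Delta[0]$, every bead must be the unique vertex, and the flag $(T^I_i)_{i=0}^m$ is therefore trivial. This factor contributes nothing and can be discarded, leaving the cone point $\top$ as the source of what remains.

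For the middle bead, recall that a simplex of $I \star \tilde J$ of the form $\sigma^I \star \sigma^{\tilde J}$ is non-degenerate precisely when both $\sigma^I$ and $\sigma^{\tilde J}$ are non-degenerate in their respective factors. Applied to $I = \Delta[0]$, this forces $N^I = 0$ and $\sigma^I = \top$, so that the bead takes the form
\[
\top \star \sigma^{\tilde J} \colon \Delta[0] \star \Delta[N] \cong \Delta[1+N] \to \Delta[0] \star \tilde J,
\]
with $N := N^J$ and with the flag $(S_i)_{i=0}^m$ inherited from \cref{necklaceinweightedjoin}. The third piece $(\tau^J, (T^J_i)_i)$ is preserved verbatim and supplies part (2) of the statement. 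Uniqueness of the resulting decomposition is inherited directly from the uniqueness clause of \cref{necklaceinweightedjoin}. The argument poses no serious obstacle; the only point requiring mild care is the characterization of non-degenerate simplices in a join, which is standard and can be extracted from the explicit levelwise description of $I \star \tilde J$ recalled in \cref{sectionslice}.
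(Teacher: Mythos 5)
Your proposal is correct and takes essentially the same approach as the paper: the paper states \cref{necklaceinfatjoinwithDelta0} with no separate proof, introducing it only by the phrase ``Specializing to the case $I:=\Delta[0]$'' applied to \cref{necklaceinweightedjoin}. The details you supply --- that a totally non-degenerate necklace in $\Delta[0]$ is forced to be trivial, and that non-degeneracy of the middle bead forces $\sigma^I=\top$ and $N^I=0$ --- are exactly the content implicit in that specialization.
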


We are finally ready to prove \cref{mainfactweighted}, which describes the morphisms of $\mathfrak C[I\star^b J]$ from a vertex $a$ of $I$ to a vertex $b$ of $J$.

\begin{proof}[Proof of \cref{mainfactweighted}]
In degree $m$, consider the map
$$\Phi_0\colon\Map_{\mathfrak C[I\star^p J]}(a,b)_0\to\Map_{\mathfrak C[I\star\Delta[0]]}(a,\top)_0\times\Map_{\mathfrak C[\Delta[0]\star^pJ]}(\bot,b)_0,$$
which, modulo the identifications from \cref{thmnecklaces,necklaceinweightedjoin,necklaceinfatjoinwithDelta0}, is described by
$$\tau=[\tau^I,\sigma^I\star \sigma^{\tilde J},\tau^J]\mapsto\left([\tau^I,\sigma^I\star \bot],[\top\star \sigma^{\tilde J},\tau^J]\right).$$
The map $\Phi_0$ is bijective. Indeed, the map
$$\Psi_0\colon\Map_{\mathfrak C[I\star\Delta[0]]}(a,\top)_0\times\Map_{\mathfrak C[\Delta[0]\star^pJ]}(\bot,b)_0\to\Map_{\mathfrak C[I\star^pJ]}(a,b)_0,$$
described, again using the identifications from \cref{thmnecklaces,necklaceinweightedjoin,necklaceinfatjoinwithDelta0}, by the assignment
$$(\nu^I,\nu^J)=\left([\tau^I,\sigma^I\star \top],[\bot\star\sigma^{\tilde J},\tau^J]\right)\mapsto[\tau^I,\sigma^I\star\sigma^{\tilde J},\tau^J]$$
can be checked to be the inverse function.

By taking into account the structure of flags suitably, the assignment that defines $\Phi_0$ can be promoted to a well-defined function
$$\Phi_m\colon\Map_{\mathfrak C[I\star^pJ]}(a,b)_m\to\Map_{\mathfrak C[I\star\top]}(a,\top)_m\times\Map_{\mathfrak C[\bot\star J]}(\bot,b)_m.$$
If $(T_i)_i$ denotes the flag of $\tau$, the flags for $\tau^I$ and $\tau^J$ are described in the proof of \cref{necklaceinweightedjoin}, and we declare that the flags for $\sigma^I\star\bot$ and $\top\star\sigma^{\tilde J}$ are given for $i=0,\dots,m$ by
$$\left(T_i\cap V(\sigma^I\star\sigma^{\tilde J})\cap V(I)\right)\cup\{\top\}\quad\text{ and }\quad\left(T_i\cap V(\sigma^I\star\sigma^{\tilde J})\cap V(\tilde J)\right)\amalg_{V(\tilde J)}V(J)\cup\{\bot\}.$$

The map $\Phi_m$ is bijective. Indeed, the function $\Psi_0=\Phi_0^{-1}$ can also be be promoted to a well-defined function
$$\Psi_m\Map_{\mathfrak C[I\star^pJ]}(a,b)_m\to\Map_{\mathfrak C[I\star\top]}(a,\top)_m\times\Map_{\mathfrak C[\bot\star J]}(\bot,b)_m$$
by taking into account the flags as follows.
If $(S^I_i)_i$ and $(S^J_i)_i$ are the flags of $\nu^I$ and $\nu^J$, the flag of $\sigma^I\star\sigma^{\tilde J}$ is declared to be given for $i=0,\dots,m$ by
$$(S^I_i\setminus\{\top\})\cup(S^J_i\setminus\{\bot\})\amalg_{V(\tilde J)}V(J).$$

Finally, a direct inspection should show the components of $\Phi_m$,
$$\Map_{\mathfrak C[I\star^pJ]}(a,b)_m\to\Map_{\mathfrak C[I\star\Delta[0]]}(a,\top)_m\text{ and }\Map_{\mathfrak C[I\star^pJ]}(a,b)_m\to
\Map_{\mathfrak C[\Delta[0]\star^pJ]}(\bot,b)_m,$$
is compatible with the simplicial structure, yielding the desired simplicial isomorphism.
\end{proof}



\bibliographystyle{amsalpha}
\bibliography{ref}

\end{document}